\definecolor{myc}{cmyk}{0.0009,0.8,0.8,0.00}
\newtheorem{theorem}{Theorem}[section]
\newtheorem{lem}[theorem]{Lemma}
\newtheorem{pro}[theorem]{Proposition}
\theoremstyle{definition}
\newtheorem{rem}[theorem]{Remark}
\newtheorem*{nb}{\footnotesize {N.B}}
\numberwithin{equation}{section}
\def\dive{\operatorname{div}}
\def\p{\partial}
\def\no{\noindent}
\def\io{{\infty}}
\def\re{\operatorname{Re}}
\def\im{\operatorname{Im}}
\def\Id{\operatorname{Id}}
\def\Con{{\mathscr C}}
\def\moo{\Con^{\io}}
\def\mooc{\Con^{\io}_{\textit c}}
\def\N{\mathbb N}
\def\Z{\mathbb Z}
\def\R{\mathbb R}
\def\poscal#1#2{\langle#1,#2\rangle}
\def\poi#1#2{\left\{#1,#2\right\}}
\def\norm#1{\Vert#1\Vert}
\def\val#1{\vert#1\vert}
\def\valjp#1{\langle#1\rangle}
\def\l2{L^2(\R^{n})}
\def\L2{L^2(\R^{2n})}
\def\supp{\operatorname{supp}}
\def\vs{\vskip.3cm}
\def\ind#1{\mathbf 1_{#1}}
\let \dis=\displaystyle
\let\no=\noindent
\let \dis=\displaystyle
\let\no=\noindent
\def\mat22#1#2#3#4{\begin{pmatrix}#1&#2\\ #3&#4\end{pmatrix}}
\def\wt#1{\widetilde{#1}}
\def\XXint#1#2#3{{\setbox0=\hbox{$#1{#2#3}{\int}$}
     \vcenter{\hbox{$#2#3$}}\kern-.5\wd0}}
\def\beq{\begin{equation}}
\def\eeq{\end{equation}}
\def\hf{{\frac{1}{2}}}
\newcommand{\rhs}{rhs\xspace}
\newcommand{\lhs}{l.h.s.\@\xspace}
\newcommand{\ie}{i.e.\@\xspace}
\newcommand{\eg}{e.g.\@\xspace}
\newcommand{\resp}{resp.\@\xspace}
\newcommand{\viz}{viz.\@\xspace}
\newcommand{\nhd}{neighborhood\xspace}
\newcommand{\suff}{sufficiently\xspace}
\def\op#1{{\operatorname{op}(#1)}}
\def\opw#1{{\operatorname{op}}^w(#1)}
\newcommand{\A}{\ensuremath{\mathcal A}}
\renewcommand{\H}{\ensuremath{\mathcal H}}
\renewcommand{\L}{\ensuremath{\mathcal L}}
\renewcommand{\P}{\ensuremath{\mathcal P}}
\renewcommand{\S}{\ensuremath{\mathcal S}}
\newcommand{\V}{\ensuremath{\mathcal V}}
\newcommand{\W}{\ensuremath{\mathcal W}}
\newcommand{\ST}{\ensuremath{\mathcal S}_\tau}
\newcommand{\br}{_{|x_n=0^+}}
\newcommand{\bl}{_{|x_n=0^-}}
\DeclareMathSymbol{\intop}{\mathop}{symbols}{115}
\def\int{\intop}
\def\iint{\intop\!\!\intop}
\DeclareMathOperator*{\ssum}{\textstyle{\sum}}
\def\qedsymbol{$\blacksquare$}
\newcommand{\bld}[1]{\mbox{\boldmath $#1$}}
\begin{document}
\title[Carleman estimates for operators with  jumps]
{Carleman estimates for elliptic operators with jumps at an interface:
Anisotropic case\\ and sharp geometric conditions
}
\author[J. Le Rousseau]{{J\'er\^ome Le Rousseau}}
\begin{address}{J\'er\^ome Le Rousseau, MAPMO, UMR CNRS 6628, 
    Route de Chartres, Universit\'e d'Orl\'eans B.P. 6759 -- 
    45067 Orl\'eans cedex 2 
    France}
  \email{\href{mailto:jlr@univ-orleans.fr}{jlr@univ-orleans.fr}}
  \urladdr{\url{http://www.univ-orleans.fr/mapmo/membres/lerousseau/}}
\end{address}
\author[N. Lerner]{Nicolas  Lerner}
\begin{address}{Nicolas Lerner,
    Projet analyse fonctionnelle, Institut de Math\'ematiques de Jussieu,
    UMR CNRS 7586,
    Universit\'e Pierre-et-Marie-Curie (Paris 6),
    Bo\^{\i}te 186 - 4, Place Jussieu - 75252 Paris cedex 05, France}
  \email{\href{mailto:lerner@math.jussieu.fr}{lerner@math.jussieu.fr}}
  \urladdr{\url{http://www.math.jussieu.fr/~lerner/}}
\end{address}

\keywords{Carleman estimate; elliptic operator; non-smooth
  coefficient; quasi-mode} 

\subjclass[2000]{35J15; 35J57; 35J75}
\date{\today}

% Acknowledgments
\thanks{The authors wish to thank E. Fern\'{a}ndez-Cara for bringing
  to their attention the importance of Carleman estimates for
  anisotropic elliptic operators towards applications to biological
  tissues.}

\thanks{The first author was partially supported by l'Agence
  Nationale de la Recherche under grant ANR-07-JCJC-0139-01.}

\begin{abstract}
  We consider a second-order selfadjoint elliptic operator with an
  anisotropic diffusion matrix  having a jump across a smooth
  hypersurface. We prove the existence of a weight-function such that
  a Carleman estimate holds true.  We moreover prove that the
  conditions imposed on the weight function are necessary.
\end{abstract}

\maketitle
{%\footnotesize\baselineskip=0.72\normalbaselineskip
\tableofcontents
}

\numberwithin{equation}{section}
\baselineskip=1.1\normalbaselineskip

%%%%%%%%%%%%%%%%%%%%%%%%%%%%%%%%%%%%%%%%%%%%%
%% Sections
% intro
\section{Introduction}

\subsection{Carleman estimates}
Let $P(x,D_{x})$ be a differential operator defined on some open
subset of $\R^n$. A {\it Carleman estimate} for this operator is the
following weighted a priori inequality
\begin{equation}\label{1.carleman}
  \norm{e^{\tau \varphi} P w}_{L^2(\R^n)}
  \gtrsim \norm{e^{\tau \varphi} w}_{L^2(\R^n)},
\end{equation}
where the weight function $\varphi$ is real-valued with a
non-vanishing gradient, $\tau$ is a large positive parameter and $w$
is any smooth compactly supported function.  This type of estimate was
used for the first time in 1939 in T.~Carleman's article
\cite{MR0000334} to handle uniqueness properties for the Cauchy
problem for non-hyperbolic operators.  To this day, it remains
essentially the only method to prove unique continuation properties
for ill-posed problems\footnote{The 1960 article by F.~John
  \cite{MR0130456} showed that, although Hadamard well-posedness
  property is a privilege of hyperbolic operators, some weaker type of
  continuous dependence, called in \cite{MR0130456} {\it H\"older
    continuous well-behaviour}, could occur. Strong connections between
  the well-behavior property and Carleman estimates can be found in an
  article by H.~Bahouri \cite{MR896184}.}, in particular to handle
uniqueness of the Cauchy problem for elliptic operators with
non-analytic coefficients\footnote{For analytic operators, Holmgren's
  theorem provides uniqueness for the non-characteristic Cauchy
  problem, but that analytical result falls short of giving a control
  of the solution from the data.}.  This tool has been refined,
polished and generalized by manifold authors and plays now a very
important r\^ole in control theory  and inverse
  problems.  The 1958 article by A.P.~Calder\'on \cite{Calderon:58}
gave a very important development of the Carleman method with a proof
of an estimate of the form of \eqref{1.carleman} using a
pseudo-differential factorization of the operator, giving a new start
to singular-integral methods in local analysis.  In the article
\cite{MR0104924} and in his first PDE book (Chapter VIII,
\cite{MR0161012}), L.~H\"ormander showed that local methods could
provide the same estimates, with weaker assumptions on the regularity
of the coefficients of the operator.  \par For instance, for
second-order elliptic operators with real coefficients\footnote{The
  paper \cite{MR597745} by S. Alinhac shows nonunique continuation
  property for second-order elliptic operators with non-conjugate
  roots; of course, if the coefficients of the principal part are
  real, this is excluded.} in the principal part, Lipschitz continuity
of the coefficients suffices for a Carleman estimate to hold and thus
for unique continuation across a $\Con^1$ hypersurface.  Naturally,
pseudo-differential methods require more derivatives, at least
tangentially, i.e., essentially on each level surface of the weight
function $\varphi$.  Chapters 17 and 28 in the 1983-85 four-volume
book \cite{MR1481433} by L.~H\"ormander contain more references and
results.  \par Furthermore, it was shown by A.~Pli{\'s}
\cite{MR0153959} that H\"older continuity is not enough to get unique
continuation: this author constructed a real homogeneous linear
differential equation of second order and of elliptic type on $\R^3$
without the unique continuation property although the coefficients are
H\"older-continuous with any exponent less than one.  The
constructions by K.~Miller in \cite{MR0342822}, and later by N.~Filonov
in \cite{Filonov:01}, showed that H\"older continuity is not sufficient to
obtain unique continuation for second-order elliptic operators, even
in divergence form (see also \cite{MR1822406} and
\cite{MR1630571} for the particular 2D case where boundedness is
essentially enough to get unique continuation for elliptic equations
 in the case of $W^{1,2}$ solutions).

\subsection{Jump discontinuities}
Although the situation seems to be almost completely clarified by the
previous results, with a minimal and somewhat necessary condition on
Lipschitz continuity, we are interested in the following second-order
elliptic operator $\L$,
\begin{equation}\label{1.operator}
  \L w= - \dive (A(x)\nabla w),\ A(x)=(a_{jk}(x))_{1\le j,k\le n}={A^T(x)},\ 
  \inf_{\norm{\xi}_{\R^n}=1}\poscal{A(x)\xi}{\xi}>0,
\end{equation}
in which the matrix $A$ has a jump discontinuity across a smooth
hypersurface.  However we shall impose some stringent --yet natural--
restrictions on the domain of functions $w$, which will be required to
satisfy some homogeneous {\em transmission conditions}, detailed in
the next sections.  Roughly speaking, it means that $w$ must belong to
the domain of the operator, with continuity at the interface, so that
$\nabla w$ remains bounded and continuity of the flux across the
interface, so that $\dive(A\nabla w)$ remains bounded, avoiding in
particular the occurrence of a simple or multiple layer at the
interface\footnote{In the sections below we shall also consider
    non-homogeneous boundary conditions.}.

The article \cite{MR1932966} by A.~Doubova, A.~Osses, and J.-P.~Puel
tackled that problem, in the isotropic case (the matrix $A$ is scalar
$c \, \mathrm{Id}$) with a monotonicity assumption: the observation
takes place in the region where the diffusion coefficient $c$ is the
`lowest'. (Note that the work of \cite{MR1932966} concerns the case of
a parabolic operator but an adaptation to an elliptic operator is
straightforward.) In the one-dimensional case, the monotonicity
assumption was relaxed for general piecewise $\Con^1$ coefficients by
A.~Benabdallah, Y.~Dermenjian and J.~Le~Rousseau \cite{BDLR:07}, and
for coefficients with bounded variations \cite{LeRousseau:07}. The
case of an arbitrary dimension without any monotonicity condition in
the elliptic case was solved by J.~Le~Rousseau and L.~Robbiano in
\cite{LRR:10}: there the isotropic case is treated as well as a
particular case of anisotropic medium. An extension of their approach
to the case of parabolic operators can be found in \cite{LRR:09}. A.~Benabdallah, Y.~Dermenjian and J.~Le~Rousseau also 
tackled the situation in which the interface meets the boundary, a case
that is typical of stratified media \cite{BDLR:10}. They treat
particular forms of anisotropic coefficients.

 \par The purpose of the present article is to show that a Carleman
 estimate can be proven for any operator of type (1.2) without an
 isotropy assumption: $A(x)$ is a symmetric positive-definite matrix
 with a jump discontinuity across a smooth hypersurface. We also
 provide conditions on the Carleman weight function that are rather
 simple to handle and we prove that they are sharp.  \par The approach
 we follow differs from that of \cite{LRR:10} where the authors base
 their analysis on the usual Carleman method for certain microlocal
 regions and on Calder\'on projectors for others. The regions they
 introduce are determined by the ellipticity or non-ellipticity of the
 conjugated operator. The method in \cite{BDLR:10} exploits a
 particular structure of the anisotropy that allows one to use Fourier
 series. The analysis is then close to that of \cite{LRR:10,LRR:09} in
 the sense that second-order operators are inverted in some frequency
 ranges.  Here, our approach is somewhat closer to A.~Calder\'on's
 original work on unique continuation~\cite{Calderon:58}: the
 conjugated operator is factored out in first-order
 (pseudo-differential) operators for which estimates are derived.
 Naturally, the quality of these estimates depends on their elliptic
 or non-elliptic nature; we thus recover microlocal regions that
 correspond to that of \cite{LRR:10}.  Note that such a factorization
 is also used in \cite{IP:03} to address non-homogeneous boundary
 conditions.

\subsection{Notation and statement of the main result}
Let $\Omega$ be an open subset of $\R^n$ and $\Sigma$ be a $\moo$
oriented hypersurface of $\Omega$: we have the partition
\begin{align}\label{2.ome}
  \Omega&=\Omega_{+}\cup \Sigma\cup \Omega_{-},\quad 
  \overline{\Omega_{\pm}}=\Omega_{\pm}\cup \Sigma,\quad\Omega_{\pm} 
  \text{ open subsets of $\R^n$,}
\end{align}
and we introduce the following Heaviside-type functions
\begin{equation}\label{2.hea}
  H_{\pm}={\mathbf 1}_{\Omega_{\pm}}.
\end{equation}
We consider  the elliptic second-order operator
\begin{equation}\label{2.ope}
  \L=D\cdot A D=-\dive(A(x)\nabla),\qquad (D=-i\nabla),
\end{equation}
where $A(x)$ is a symmetric positive-definite $n\times n$ matrix, such that
\begin{equation}\label{2.mat}
  A=H_{-} A_{-}+H_{+}A_{+},\quad
  A_{\pm}\in \moo({\Omega}).
\end{equation}
We shall consider functions $w$ of the following type:
\begin{equation}\label{2.tes}
  w=H_{-}w_{-}+H_{+}w_{+},\quad
  w_{\pm}\in \moo({\Omega}).
\end{equation}
We have $dw=H_{-}dw_{-}+H_{+}dw_{+}+(w_{+}-w_{-})\delta_{\Sigma} \nu
$, where $\delta_{\Sigma}$ is the Euclidean hypersurface measure on
$\Sigma$ and $\nu$ is the unit conormal vector field to $\Sigma$
pointing into $\Omega_{+}$.  To remove the singular term, we assume
\begin{align}\label{2.tr1}
  w_{+}=w_{-}\quad \text{at $\Sigma$},
\end{align}
so that $A dw=H_{-}A_{-}dw_{-}+H_{+}A_{+}dw_{+}$ and
\begin{equation*}
  \dive{(A dw)}=H_{-}\dive{(A_{-}dw_{-})}+H_{+}\dive{(A_{+}dw_{+})}
  +\poscal{A_{+}dw_{+}-A_{-}dw_{-}}{\nu}\delta_{\Sigma}.
\end{equation*}
Moreover, we shall assume that
\begin{align}\label{2.tr2}
\poscal{A_{+}dw_{+}-A_{-}dw_{-}}{\nu}=0\quad \text{at $\Sigma$,\  i.e. 
$\poscal{dw_{+}}{A_{+}\nu}=\poscal{dw_{-}}{A_{-}\nu}$},
\end{align}
so that
\begin{equation}\label{2.trt}
\dive(A dw)=H_{-}\dive{(A_{-}dw_{-})}+H_{+}\dive{(A_{+}dw_{+})}.
\end{equation}
Conditions \eqref{2.tr1}-\eqref{2.tr2}
will be called {\it transmission conditions} on the function $w$ and 
we define the vector space
\begin{equation}\label{1.transspace}
\W=\{H_{-}w_{-}+H_{+}w_{+}\}_{
w_{\pm }\in \moo(\Omega)\text{ satisfying {\eqref{2.tr1}-\eqref{2.tr2}}}  
}.
\end{equation}
Note that \eqref{2.tr1} is a continuity condition
of $w$ across $\Sigma$ and \eqref{2.tr2}
is concerned with the continuity of 
$\poscal{Adw}{\nu}$
across $\Sigma$,
i.e. the continuity of the flux of the vector field $Adw$ across $\Sigma$. 
%\subsubsection{Weight function}
A weight function ``suitable for observation from $\Omega_{+}$'' is defined as
 a Lipschitz continuous function $\varphi$
 on $\Omega$
such that
\begin{equation}\label{2.wei}
\varphi=H_{-}\varphi_{-}+H_{+}\varphi_{+},\quad \varphi_{\pm}\in \moo(\Omega),\quad \varphi_{+}=\varphi_{-},\quad
\poscal{d\varphi_{\pm}}{X}>0 \quad\text{at $\Sigma$},
\end{equation}
for any positively transverse vector field $X$ to $\Sigma$
(i.e. $\poscal{\nu}{X}>0$).

%%%%%%%%%%%%%%%%%%%%%%%%
% theorem              %
%%%%%%%%%%%%%%%%%%%%%%%%
\begin{theorem}
  \label{1.thm.main}
  Let $\Omega, \Sigma, \L,\W$ be as in \eqref{2.ome},
  \eqref{2.ope} and \eqref{1.transspace}.  Then for any compact subset
  $K$ of $\Omega$, there exist a weight function $\varphi$ satisfying
  \eqref{2.wei} and positive constants $C$, $\tau_1$ such that for all
  $\tau\ge \tau_1$ and all $w\in \W$ with $\supp w\subset K$,
  \begin{align}
    \label{eq: Carleman main}
    &C\norm{e^{\tau \varphi}\L w}_{L^2(\R^n)}
    \ge 
    \\ 
    &\quad\tau^{3/2}\norm{e^{\tau \varphi} w}_{L^2(\R^n)}
    +\tau^{1/2}\norm{H_{+}e^{\tau \varphi} \nabla w_{+}}_{L^2(\R^n)}
    +\tau^{1/2}\norm{H_{-}e^{\tau \varphi} \nabla w_{-}}_{L^2(\R^n)}
    \notag\\
    &\quad+\tau^{3/2}\val{(e^{\tau \varphi} w)_{\vert \Sigma}}_{L^2(\Sigma)} +\tau^{1/2}\val{(e^{\tau\varphi}  \nabla w_{+})_{\vert \Sigma}}_{L^2(\Sigma)}
    +\tau^{1/2}\val{(e^{\tau\varphi}  \nabla w_{-})_{\vert \Sigma}}_{L^2(\Sigma)}.
  \notag\end{align}
\end{theorem}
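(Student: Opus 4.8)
The plan is to reduce the global Carleman estimate to a microlocal analysis near the interface $\Sigma$, following Calder\'on's factorization philosophy. First I would use a partition of unity to separate the problem into a region away from $\Sigma$ and a tubular neighborhood of $\Sigma$. Away from the interface, $\L$ has smooth (indeed $\moo$) coefficients and is elliptic, so the classical H\"ormander--Carleman estimate for second-order elliptic operators with a pseudoconvex weight applies directly, provided $\varphi$ is chosen with $\val{\nabla\varphi}$ large enough (e.g.\ $\varphi = e^{\lambda\psi}$ for a suitable $\psi$ and large $\lambda$), yielding the interior terms $\tau^{3/2}\norm{e^{\tau\varphi}w} + \tau^{1/2}\norm{e^{\tau\varphi}\nabla w}$ with no boundary contribution. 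The real content is the neighborhood of $\Sigma$, where I would flatten the hypersurface by a local diffeomorphism so that $\Sigma = \{x_n = 0\}$, write $x = (x', x_n)$, and regard the problem as a transmission problem for the two functions $w_\pm$ on $\{\pm x_n > 0\}$ coupled by \eqref{2.tr1}--\eqref{2.tr2} at $x_n = 0$.

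In that half-space picture I would conjugate: set $v_\pm = e^{\tau\varphi} w_\pm$ and compute the conjugated operator $\L_{\varphi,\pm} = e^{\tau\varphi}\L_\pm e^{-\tau\varphi}$, a second-order operator in $D_{x_n}$ with $\tau$-dependent symbol. Writing $D_{x_n}^2$-coefficient normalized, one factors the principal symbol (in the semiclassical calculus with large parameter $\tau$ and tangential frequency $\xi'$) as a product of two first-order factors $(D_{x_n} - \rho_\pm^+)(D_{x_n} - \rho_\pm^-)$ where $\rho_\pm^{\mp}$ are the roots in $D_{x_n}$; since $A_\pm$ is positive-definite the two roots lie in opposite half-planes ($\im\rho_\pm^\pm \neq 0$ away from the glancing set, with sign determined by ellipticity), so each first-order factor admits a one-sided estimate — a "good" estimate with a gain of $\tau^{1/2}$ derivative in the elliptic directions — exactly as in Calder\'on's argument. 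Composing the two factors and integrating by parts in $x_n$ on each half-line $\R_\pm$ produces the interior terms together with boundary terms at $x_n = 0$: quadratic forms in the traces $(v_\pm)\bt$, $(D_{x_n} v_\pm)\bt$. The transmission conditions, rewritten after conjugation, let me express the $v_-$-traces in terms of the $v_+$-traces (or combine them), and the crucial positivity is that the resulting boundary quadratic form is bounded below by $\tau^{3/2}\val{(v)\bt}^2 + \tau^{1/2}\val{(\nabla v)\bt}^2$, which is where the sign condition \eqref{2.wei} on $\poscal{d\varphi_\pm}{X}$ at $\Sigma$ enters decisively — it guarantees that observation from $\Omega_+$ (the side where $\varphi$ increases into) controls the trace energy with the correct sign.

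The main obstacle, and the place I expect the bulk of the work, is proving the positivity of the boundary quadratic form in the region where the conjugated operator is \emph{not} elliptic in $D_{x_n}$ — the zone where $\tau$ and $\val{\xi'}$ are comparable and a root $\rho_\pm$ is (nearly) real. There the one-sided first-order estimates degrade, one cannot simply absorb trace terms, and one must track carefully how the two factors and the two sides interact through the transmission conditions; this is the anisotropic analogue of the "sharp geometric condition" the paper advertises. I would handle it by a further microlocal partition (elliptic $+$/elliptic $-$/hyperbolic/glancing pieces), treating the hyperbolic and glancing pieces by a direct energy computation in $x_n$ with a well-chosen multiplier (a first-order tangential pseudodifferential symbol depending on $\tau$, built from $\re\rho_\pm$ and a sign factor), and checking that the weight condition \eqref{2.wei} makes the boundary term from that multiplier nonnegative. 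Once the neighborhood-of-$\Sigma$ estimate with all six terms is established, I would glue it to the interior estimate via the partition of unity, absorbing the commutator errors (which are lower order in $\tau$) into the left-hand side for $\tau \ge \tau_1$, and finally undo the change of variables and the localization to obtain \eqref{eq: Carleman main} for arbitrary $w \in \W$ with $\supp w \subset K$.
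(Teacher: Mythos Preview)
Your outline captures the overall architecture correctly---localize, flatten $\Sigma$, conjugate, factor the conjugated operator into first-order pieces, get one-sided estimates on $\R_\pm$, and couple through the transmission conditions---but it contains a genuine gap at the most delicate point, and the gap is precisely where the paper's contribution lies.

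You claim that the sign condition \eqref{2.wei}, namely $\poscal{d\varphi_\pm}{X}>0$ at $\Sigma$, is what ``enters decisively'' to make the boundary quadratic form positive. This is not enough. The paper's weight is built with $\varphi'_\pm(0)=\alpha_\pm$ satisfying the \emph{quantitative} condition
\[
\frac{\alpha_+}{\alpha_-}>\sup_{x',\,|\xi'|\ge1}\frac{m_+(x',\xi')}{m_-(x',\xi')},
\]
where $m_\pm$ are the tangential square roots $\bigl(b^\pm_{jk}\xi_j\xi_k/a^\pm_{nn}\bigr)^{1/2}$. Section~\ref{sec: counter-example} shows this is sharp: if $\alpha_+/\alpha_-<m_+(\xi'_0)/m_-(\xi'_0)$ for some $\xi'_0$, one can build a quasi-mode that kills any estimate of the form \eqref{eq: Carleman main}, even though \eqref{2.wei} still holds. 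So your proposed verification that \eqref{2.wei} makes the boundary form nonnegative cannot succeed as stated; you must first \emph{choose} $\alpha_\pm$ to satisfy the anisotropic ratio condition, and then explain what that condition buys you.

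What it buys is not positivity of a single boundary quadratic form but a \emph{dichotomy}: writing the first-order factors as $D_n+S_\pm+i e_\pm$ and $D_n+S_\pm+i f_\pm$ with $e_\pm=\tau\varphi'_\pm+m_\pm$ (always elliptic positive) and $f_\pm=\tau\varphi'_\pm-m_\pm$, the ratio condition forces $f_+\ge C\lambda$ \emph{or} $f_-\le -C\lambda$, in two overlapping conic regions (Lemma~\ref{4.lem.keybus}). In the first region both factors on $\R_+$ are elliptic-positive and you control the trace directly, then push it to $\R_-$ via the transmission conditions. In the second region neither side alone controls the trace; instead, the elliptic-positive factor $e_+$ on $\R_+$ controls $\V_+-i a_{nn}^+M_+v\br$, the elliptic-negative factor $f_-$ on $\R_-$ controls $\V_-+i a_{nn}^-M_-v\bl$, and since the \emph{second} transmission condition gives $\V_+=\V_-$, subtracting yields control of $(a_{nn}^+M_++a_{nn}^-M_-)v\bt$, an elliptic first-order trace. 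This algebraic use of the flux transmission condition is the missing mechanism in your sketch; your ``multiplier with the right sign from \eqref{2.wei}'' does not produce it, and a four-zone elliptic/hyperbolic/glancing split of the type you propose is not how the argument runs. You also need the sub-ellipticity condition~\eqref{eq: sub-ellipticity condition} (secured by the convexity parameter $\beta$) to handle the factor $D_n+S_\pm+i f_\pm$ in the interior when $f_\pm$ changes sign (Lemma~\ref{lemma: increasing root}); this is separate from the interface condition and is what gives the half-derivative loss.
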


%%%%%%%%%%%%%%%%%%%%%%%%
% remark               %
%%%%%%%%%%%%%%%%%%%%%%%%
\begin{rem}
  It is important to notice that whenever a true discontinuity occurs
  for the vector field $A\nu$, then the space $\W$ does {\it
    not} contain $\moo(\Omega)$: the inclusion $\moo(\Omega)\subset
  \W$ implies from \eqref{2.tr2} that for all $w\in
  \moo(\Omega)$, $\poscal{dw}{A_{+}\nu-A_{-}\nu}=0$ at $\Sigma$ so
  that $A_{+}\nu=A_{-}\nu$ at $\Sigma$, that is continuity for $A\nu$.
  The Carleman estimate which is proven in the present paper takes
  naturally  into account these transmission conditions on the
  function $w$ and it is important to keep in mind that the occurrence
  of a jump is excluding many smooth functions from the space
  $\W$.  On the other hand, we have $\W\subset
  \text{Lip}(\Omega)$.
\end{rem}
%%%%%%%%%%%%%%%%%%%%%%%%
% remark               %
%%%%%%%%%%%%%%%%%%%%%%%%
\begin{rem}\label{2.rem.geomet}
  We can also point out the geometric content of our assumptions,
  which do not depend on the choice of a coordinate system.  For each
  $x\in \Omega$, the matrix $A(x)$ is a positive-definite symmetric
  mapping from $T_{x}(\Omega)^*$ onto $T_{x}(\Omega)$ so that $A(x)
  dw(x)$ belongs indeed to $T_{x}(\Omega)$ and $Adw$ is a vector field
  with a $L^2$ divergence (Inequality~\eqref{eq: Carleman main} yields the $L^2$ bound
  by density).
\end{rem}
If we were to consider a more general framework in which the matrix
$A(x)$, symmetric, positive-definite belongs to $BV(\Omega)\cap
L^\io(\Omega)$, and $w$ is a Lipschitz continuous function on $\Omega$
the vector field $Adw$ is in $L^\io(\Omega)$: the second transmission
condition reads in that framework $\dive(Adw)\in L^\io(\Omega).$
Proving a Carleman estimate in such a case is a wide open question.

\subsection{Sketch of the proof}
\label{sec: sketch of the proof}
We provide in this subsection an outline of the main arguments used in
our proof. To avoid technicalities, we somewhat simplify the geometric
data and the weight function, keeping of course the anisotropy.  We
consider the operator
\begin{equation}\label{1.toy001}
\L_{0}=\ssum_{1\le j\le n}D_{j}c_{j}D_{j},\
c_{j}(x)=H_{+}c_{j}^++H_{-}c_{j}^-,\
c_{j}^{\pm}\text{\small $>0$ constants,}\
H_{\pm}=\ind{\{\pm x_{n}>0\}},
\end{equation}
with $ D_{j}=\frac{\p}{ i\p x_{j}},$ and the vector space $\W_{0}$ of functions
$H_{+}w_{+}+H_{-}w_{-}$, $w_{\pm}\in \mooc(\R^n)$,
such that 
\begin{equation}\label{1.toy002}\text{at $x_{n}=0$,\ }
w_{+}=w_{-},\
c_{n}^+\p_{n}w_{+}=c_{n}^-\p_{n}w_{-}
\
\text{\footnotesize (transmission conditions across $x_{n}=0$).}
\end{equation}
As a result, for $w\in \W_{0}$,
we have $D_{n}w=H_{+}D_{n}w_{+}+H_{-}D_{n}w_{-}$
and 
\begin{equation} \label{1.toygtre}
\L_{0}w=\ssum_{j}(H_{+}c_{j}^+D_{j}^2w_{+}
+H_{-}c_{j}^-D_{j}^2w_{-}).
\end{equation} 
We also consider a weight function\footnote{In the main
  text, we shall introduce some minimal requirements on the weight function
  and suggest other possible choices.
  }
\begin{equation}\label{1.toy003}
\varphi=\underbrace{(\alpha_{+}x_{n}+\beta x_{n}^2/2)}_{\varphi_{+}}H_{+}
+\underbrace{(\alpha_{-}x_{n}+\beta x_{n}^2/2)}_{\varphi_{-}}H_{-},\quad \alpha_{\pm}>0,\quad \beta>0,
\end{equation}
a positive parameter $\tau$ and the vector space
$\W_{\tau}$ of functions 
$H_{+}v_{+}+H_{-}v_{-}$, $v_{\pm}\in \mooc(\R^n)$,
such that at $x_{n}=0$,
\begin{align}
  \label{eq: trans cond1}
  &v_{+}=v_{-},\\
  \label{eq: trans cond2}
  &c_{n}^+(D_{n}v_{+}+i\tau \alpha_{+}v_{+})=c_{n}^-(D_{n}v_{-}+i\tau
  \alpha_{-}v_{-}).
\end{align}
Observe that $w \in \W_{0}$ is equivalent to
$ v = e^{\tau \varphi} w \in W_\tau$. 
We have
$$
e^{\tau \varphi}\L_{0} w=\underbrace{e^{\tau \varphi}\L_{0}e^{-\tau \varphi}}_{\L_{\tau}}
(e^{\tau \varphi}w)
$$
so that proving a weighted a priori estimate
$
\norm{e^{\tau \varphi}\L_{0} w}_{L^2(\R^n)}\gtrsim\norm{e^{\tau \varphi}w}_{L^2(\R^n)}
$
for $w\in \W_{0}$
amounts to getting
$
\norm{\L_{\tau} v}_{L^2(\R^n)}\gtrsim\norm{v}_{L^2(\R^n)}
$
for $v\in \W_{\tau}$.

\medskip
\par\no
{\small \bf Step 1: pseudo-differential factorization.}
Using Einstein convention on repeated indices $j\in \{1,\dots, n-1\}$,
we have
\begin{equation*}
\L_{\tau}=(D_{n}+i\tau\varphi') c_{n}(D_{n}+i\tau\varphi') 
+D_{j}c_{j}D_{j}
\end{equation*}
and for $v\in \W_{\tau}$, from \eqref{1.toygtre},
with
$m_\pm = m_{\pm}(D')=(c_{n}^{\pm})^{-1/2}(c_{j}^{\pm}D_{j}^2)^{1/2}$,
$$
\L_{\tau} v=H_{+}c_{n}^+
\bigl( (D_{n}+i\tau\varphi'_{+})^2+m_{+}^2
\bigr)v_{+}
+H_{-}c_{n}^-
\bigl( (D_{n}+i\tau\varphi'_{-})^2+m_{-}^2
\bigr)v_{-}
$$
so that
\begin{multline}\label{1.toy007}
\L_{\tau} v=
H_{+}c_{n}^+
\bigl(D_{n}+i(\overbrace{\tau \varphi'_{+}+m_{+}}^{e_{+}})\bigr)
\bigl(D_{n}+i(\overbrace{\tau \varphi'_{+}-m_{+}}^{f_{+}})\bigr)v_{+}
\\+
H_{-}c_{n}^-
\bigl(D_{n}+i(\underbrace{\tau \varphi'_{-}-m_{-}}_{f_{-}})\bigr)
\bigl(D_{n}+i(\underbrace{\tau \varphi'_{-}+m_{-}}_{e_{-}})\bigr)v_{-}.
\end{multline}
Note that $e_{\pm}$ are elliptic positive in the sense that
$e_{\pm}=\tau\alpha_{\pm}+m_{\pm}\gtrsim \tau+\val {D'}$.  We want at
this point to use some natural estimates for first-order factors on
the half-lines $\R_{\pm}$: let us for instance check on $t>0$ for
$\omega\in \mooc(\R)$, $\lambda, \gamma$ positive,
\begin{align}
  \label{eq: nice estimate e positive}
  &\norm{D_{t}\omega+i(\lambda+\gamma t) \omega }_{L^2(\R_{+})}^2
  \\
  & \qquad=\norm{D_{t}\omega}_{L^2(\R_{+})}^2
  +
  \norm{(\lambda+\gamma t) \omega }_{L^2(\R_{+})}^2
  +2\re\poscal{D_{t}\omega}{iH(t)(\lambda+\gamma t)\omega}
  \nonumber \\
  & \qquad \ge \int_{0}^{+\io}
  \bigr((\lambda+\gamma t)^2+\gamma\bigr)\val{\omega(t)}^2 dt 
  +\lambda\val{\omega(0)}^2
  \ge \norm{\lambda \omega}_{L^2(\R_{+})}^2+\lambda\val{\omega(0)}^2,
  \nonumber 
\end{align}
which is somehow a perfect estimate of elliptic type, 
suggesting that the first-order factor containing $e_{+}$
should be easy to handle.
Changing $\lambda$ in $-\lambda$ gives
\begin{align*}
  \norm{D_{t}\omega+i(-\lambda+\gamma t) \omega }_{L^2(\R_{+})}^2
  &\ge
  2\re\poscal{D_{t}\omega}{iH(t)(-\lambda+\gamma t)\omega}
  \\
  &= \int_{0}^{+\io} \gamma \val{\omega(t)}^2 dt -\lambda\val{\omega(0)}^2,
\end{align*}
so that
$
\norm{D_{t}\omega+i(-\lambda+\gamma t) \omega }_{L^2(\R_{+})}^2+\lambda\val{\omega(0)}^2\ge \gamma\norm{\omega}_{L^2(\R_{+})}^2,
$
an estimate of lesser quality, because we need to secure a control
of $\omega(0)$ to handle this type of factor.

\medskip
\par\no
{\small \bf Step 2:  case $\bld{f_{+}\ge 0}$.}
Looking at formula \eqref{1.toy007}, since the factor containing
$e_{+}$ is elliptic in the sense given above, we have to discuss on
the sign of $f_{+}$. Identifying the operator with its symbol, we have
$f_{+}=\tau (\alpha_{+}+\beta x_{n})-m_{+}(\xi'),$ and thus
$\tau\alpha_{+}\ge m_{+}(\xi')$ yielding a positive $f_+$.  Iterating the
method outlined above on the half-line $\R_{+}$, we get a nice
estimate of the form of \eqref{eq: nice estimate e positive} on
$\R_{+}$; in particular we obtain a control of $v_{+}(0)$.  From the
transmission condition, we have $v_{+}(0) =v_{-}(0)$ and hence this
amounts to also controlling $v_{-}(0)$.  That control along with the
natural estimates on $\R_{-}$ are enough to prove an inequality
  of the form of the sought Carleman estimate.

\medskip
\par\no
{\small \bf Step 3: case $\bld{f_{+}<0}$.} Here, we assume that 
$\tau\alpha_{+}< m_{+}(\xi')$.  We can still use on $\R_{+}$ the
factor containing $e_{+}$, and by \eqref{1.toy007} and \eqref{eq: nice
  estimate e positive} control the following quantity
\begin{equation}\label{1.toy010}
  c_{n}^+(D_{n}+if_+)v_{+}(0)
  = \overbrace{c_{n}^+(D_{n}v_{+}+i\tau \alpha_{+})v_{+}(0)}^{={\V}_{+}}
  -c_{n}^+im_{+}v_{+}(0).
\end{equation}
Our key assumption is
\begin{equation}\label{1.toy008}
  f_{+}(0)<0\Longrightarrow f_{-}(0)\le 0.
\end{equation}
Under that hypothesis, we can use the negative factor $f_{-}$ on
$\R_{-}$ (note that $f_{-}$ is increasing with $x_{n}$, so that
$f_{-}(0)\le 0\Longrightarrow f_{-}(x_{n})<0$ for $x_{n}<0$).  We then
control
\begin{equation}\label{1.toy009}
  c_{n}^-(D_{n}+i e_-)v_{-}(0)
  = \underbrace{c_{n}^-(D_{n}v_{-}+i\tau \alpha_{-})v_{-}(0)}_{={\V}_{-}}
  +c_{n}^-im_{-}v_{-}(0).
\end{equation}
Nothing more can be achieved with inequalities on each side of the
interface.  At this point we however notice that the second
transmission condition in \eqref{eq: trans cond2} implies ${\mathcal
  V}_{-}={\V}_{+}$, yielding the control of the
difference of \eqref{1.toy009} and \eqref{1.toy010}, \ie,
of
$$
c_{n}^-im_{-}v_{-}(0)+c_{n}^+im_{+}v_{+}(0)
=i\bigl(c_{n}^-m_{-}+c_{n}^+m_{+}\bigr)v(0).
$$
Now, as $c_{n}^-m_{-}+c_{n}^+m_{+}$ is elliptic positive, this gives a
control of $v(0)$ in (tangential) $H^1$-norm, which is enough then to
get an estimate on both sides that leads to the sought Carleman estimates.

\medskip
\par\no
{\small \bf Step 4: patching estimates together.}
The analysis we have sketched here relies on a separation into two
zones in the $(\tau,\xi')$ space. Patching the estimates of the form
of \eqref{eq: Carleman main} in each zone together allows us to
conclude the proof of the Carleman estimate.

\subsection{Explaining the key assumption}
\label{sec: explaining assumption}

In the first place, our key assumption, condition~\eqref{1.toy008}, can
be reformulated as
\begin{equation}\label{1.toybis}
\forall \xi'\in \mathbb S^{n-2},\quad
\frac{\alpha_{+}}{\alpha_{-}}\ge \frac{m_{+}(\xi')}{m_{-}(\xi')}.
\end{equation}

In fact \footnote{ For the main theorem, we shall in fact require the
  stronger strict inequality
  \begin{equation}\label{1.toy011}
    \frac{\alpha_{+}}{\alpha_{-}}>\frac{m_{+}(\xi')}{m_{-}(\xi')}.
  \end{equation}
  However, we shall see in Section~\ref{sec: counter-example} that in the
  particular case presented here, where the matrix $A$ is piecewise
  constant and the weight function $\varphi$ solely depends on $x_{n}$
  the inequality \eqref{1.toybis} is actually a {\em necessary and
    sufficient} condition to obtain a Carleman estimate with weight
  $\varphi$. }, \eqref{1.toy008} means $\tau
\alpha_{+}<m_{+}(\xi')\Longrightarrow \tau \alpha_{-}\le m_{-}(\xi')$
and since $\alpha_{\pm},m_{\pm}$ are all positive, this is equivalent
to having $ {m_{+}(\xi')}/{\alpha_{+}}\le {m_{-}(\xi')}/{\alpha_{-}},\
\text{which is \eqref{1.toybis}}.  $ An analogy with an estimate for a
first-order factor may shed some light on this condition.  With
$$f(t)=H(t)(\tau\alpha_{+}+\beta t-m_{+})+H(-t)(\tau\alpha_{-}+\beta t-m_{-}),
\quad
\text{\small $\tau, \alpha_{\pm},\beta , m_{\pm}$ positive constants,}
$$
we want to prove an injectivity estimate of the type
$\norm{D_{t}v+if(t) v}_{L^2(\R)}\gtrsim \norm{v}_{L^2(\R)}$, say for
$v\in \mooc(\R)$.  It is a classical fact (see \eg Lemma~3.1.1 in
\cite{lernerbook}) that such an estimate (for a smooth $f$) is
equivalent to the condition that $t\mapsto f(t)$ does not change sign
from $+$ to $-$ while $t$ increases: it means that the adjoint
operator $D_{t}-if(t)$ satisfies the so-called condition $(\Psi)$.
Looking at the function $f$, we see that it increases on each
half-line $\R_{\pm}$, so that the only place to get a ``forbidden''
change of sign from $+$ to $-$ is at $t=0$: to get an injectivity
estimate, we have to avoid the situation where $f(0^{+})<0$ and $
f(0^{-})>0, $ that is, we have to make sure that $
f(0^{+})<0\Longrightarrow f(0^{-})\le 0,\text{\ which is indeed the
  condition \eqref{1.toybis}}.  $ The function $f$ is increasing
affine on $\R_{\pm}$ with the same slope $\beta$ on both sides, with a
possible discontinuity at 0.\par
\begin{figure}[h]
\scalebox{1.5}{\includegraphics{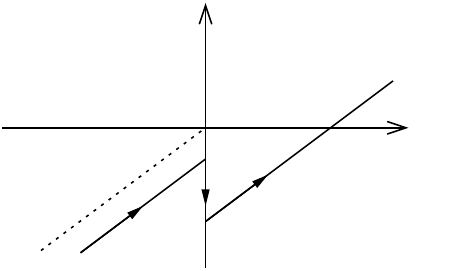}\includegraphics{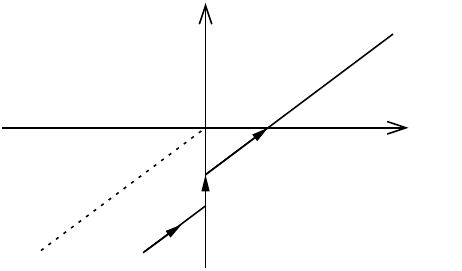}}
\caption{$f(0^-) \leq 0$; $f(0^+) <0$.}
\end{figure}
When $f(0^{+})<0$ we should have $f(0^{-})\le 0$ and the line
on the left cannot go above the dotted line, in such a way that 
the discontinuous  zigzag curve with the arrows has only a change of sign from $-$ to $+$.
\par\vs
\begin{figure}[h]
\scalebox{1.5}{\includegraphics{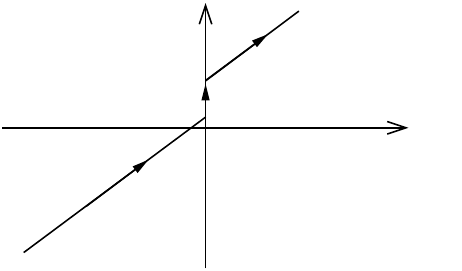}}
\caption{$f(0^-) \gtrless 0$; $f(0^+) \geq 0$.}
\label{pic001}
\end{figure}
When $f(0^{+})\ge 0$, there is no other constraint on $f(0^{-})$: even with a discontinuity, the change of sign can only occur from $-$ to $+$.

We prove below (Section~\ref{sec: counter-example}) that condition
\eqref{1.toybis} is relevant to our problem in the sense that it is
indeed necessary to have a Carleman estimate with this weight: if
\eqref{1.toybis} is violated, we are able for this model to
construct a quasi-mode for $\L_{\tau}$, i.e. a $\tau$-family
of functions $v$ with $L^2$-norm 1 such that $\norm{\mathcal
  L_{\tau}v}_{L^2} \ll \norm{v}_{L^2}$, as $\tau$ goes to $\infty$,
ruining any hope to prove a Carleman estimate.  As usual for this type
of construction, it uses some type of complex geometrical optics
method, which is easy in this case to implement directly, due to the
simplicity of the expression of the operator.

%%%%%%%%%%%%%%%%%%%%%%%%
% remark               %
%%%%%%%%%%%%%%%%%%%%%%%%
\begin{rem}
  A very particular case of anisotropic medium was tackled in
  \cite{LRR:10} for the purpose of proving a controllability result
  for linear parabolic equations. The condition imposed on the weight
  function in \cite{LRR:10} (Assumption 2.1 therein) is much more
  demanding than what we impose here.  In the isotropic case, $c^\pm_j
  = c_\pm$ for all $j \in \{1,\dots, n\}$, we have $m_+ = m_- =
  |\xi'|$ and our condition \eqref{1.toy011} reads $\alpha_+ >
  \alpha_-$.  Note also that the isotropic case $c_- \geq c_+$ was
  already considered in \cite{MR1932966}.

  In \cite{LRR:10}, the controllability result concerns an isotropic
  parabolic equation. The Carleman estimate we derive here extends
  this result to an anisotropic parabolic equation.
\end{rem}

%  Framework
\section{Framework}

\subsection{Presentation}
\label{sec: presentation}
Let $\Omega, \Sigma$ be as in \eqref{2.ome}.  With $$\Xi=\{
\text{positive-definite $n\times n$ matrices} \},$$ we consider
$A_{\pm}\in \moo(\Omega;\Xi)$ and let $\L, \varphi$ be as in
\eqref{2.ope} and \eqref{2.wei}.
We set 
\begin{equation*}
  \L_\pm=D\cdot A_\pm D=-\dive(A_\pm \nabla).
\end{equation*}

Here, we generalize our analysis to non-homogeneous transmission conditions:
for $\theta$ and $\Theta$ smooth functions of the interface $\Sigma$ we set
\begin{align}
  \label{eq: trans cond}
  w_{+}-w_{-}= \theta, \quad \text{and}\ \ 
  \poscal{A_{+}dw_{+}-A_{-}dw_{-}}{\nu}=\Theta \quad \text{at} \ \Sigma,
\end{align}
(compare with \eqref{2.tr1}-\eqref{2.tr2})
and introduce
\begin{align}
  \label{eq: trans space nonhomogeneous}
  \W^{\theta,\Theta}_0 = \big\{ H_- w_- + H_+ w_+\big\}_{w_\pm \in \mooc(\Omega) \ \text{satisfying}\ \eqref{eq: trans cond}.} 
\end{align}
For $\tau\ge 0$ we define the affine space
\begin{equation}\label{2.doublev}
\W^{\theta,\Theta}_{\tau}=\{e^{\tau \varphi}w\}_{w\in \W_{0}^{\theta,\Theta}}.
\end{equation}
For $v\in \W^{\theta,\Theta}_{\tau}$, we have $v= e^{\tau \varphi}w$ with $w\in
\W^{\theta,\Theta}_{0}$ so that, using the notation introduced in \eqref{2.hea},
\eqref{2.tes}, with $v_{\pm}=e^{\tau \varphi_{\pm}}w_{\pm},$ we have
\begin{equation}\label{2.caf}
v=H_{-}v_{-}+H_{+}v_{+},\quad 
\end{equation}
and we see that the transmission conditions \eqref{eq: trans cond}
on $w$ read for $v$ as 
\begin{equation}\label{eq: trans cond varphi}
v_{+}-v_{-}  = \theta_\varphi, \quad 
\poscal{dv_{+}-\tau v_{+}d\varphi_{+}}{A_{+}\nu}-
\poscal{dv_{-}-\tau v_{-} d\varphi_{-}}{A_{-}\nu} = \Theta_\varphi, 
\quad \text{at $\Sigma$,}
\end{equation}
with 
\begin{equation}
  \theta_\varphi = e^{\tau \varphi_{|\Sigma}} \theta, \quad 
  \Theta_\varphi = e^{\tau \varphi_{|\Sigma}} \Theta.
\end{equation}
Observing that $e^{\tau \varphi_{\pm}} D e^{-\tau
  \varphi_{\pm}}=D+i\tau d\varphi_{\pm}$, for $w\in \W^{\theta,\Theta}$, we
obtain
\begin{align*}
&e^{\tau \varphi_\pm}\L_\pm w_\pm
=e^{\tau \varphi_{\pm}} D\cdot A_{\pm} D e^{-\tau \varphi_{\pm}}v_{\pm}
=(D+i\tau d\varphi_{\pm})\cdot 
A_{\pm}(D+i\tau d\varphi_{\pm})v_{\pm}
\end{align*}

We define
\begin{equation}\label{2.ppm}
\P_{\pm}=(D+i\tau d\varphi_{\pm})\cdot A_{\pm}(D+i\tau d\varphi_{\pm}).
\end{equation}

%%%%%%%%%%%%%%%%%%%%%%%%
% proposition          %
%%%%%%%%%%%%%%%%%%%%%%%%
\begin{pro}
  \label{mainprop}
  Let $\Omega, \Sigma, \L,\W^{\theta,\Theta}_{\tau}$
  be as in \eqref{2.ome},  \eqref{2.ope} and \eqref{2.doublev}.
  Then for any compact subset
  $K$ of $\Omega$, there exist a weight function $\varphi$ satisfying
  \eqref{2.wei} and positive constants $C$, $\tau_1$ such that for all
  $\tau\ge \tau_1$ and all $v\in \W_{\tau}$ with $\supp v\subset K$
  \begin{multline*}
    C \big(\norm{H_{-}\P_{-}v_{-}}_{L^2(\R^n)}+ \norm{H_{+}\P_{+}v_{+}}_{L^2(\R^n)}
     + {\mathcal T}_{\theta, \Theta} \big)  \ge \tau^{3/2}\val{{v_\pm}}_{L^2(\Sigma)} 
     + \tau^{1/2}\val{( \nabla v_{\pm})}_{L^2(\Sigma)}\\  
     + \tau^{3/2}\norm{v}_{L^2(\R^n)}
     +{\tau^{1/2}\norm{H_{+} \nabla v_{+}}_{L^2(\R^n)}}
     +{\tau^{1/2}\norm{H_{-} \nabla v_{-}}_{L^2(\R^n)}},
  \end{multline*}
where $\displaystyle{
  {\mathcal T}_{\theta, \Theta} =  \tau^{3/2} \val{\theta_\varphi}_{L^2(\Sigma)} 
     +  \tau^{1/2} \val{\nabla_\Sigma \theta_\varphi}_{L^2(\Sigma)} 
     +  \tau^{1/2} \val{\Theta_\varphi}_{L^2(\Sigma)}}$.
\end{pro}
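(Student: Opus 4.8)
The plan is to reduce the anisotropic variable-coefficient case to the model situation sketched in Section~\ref{sec: sketch of the proof} by a combination of localization, a change of variables flattening $\Sigma$, and the usual freezing-of-coefficients argument, and then to run the pseudo-differential factorization carefully with the transmission conditions kept as boundary terms. First I would localize: cover $K$ by finitely many coordinate charts in which $\Sigma=\{x_n=0\}$ and $\Omega_\pm=\{\pm x_n>0\}$, use a partition of unity subordinate to this cover, and observe (as in the classical Carleman machinery) that commutators of $\P_\pm$ with the cutoffs produce terms of lower order in $\tau$ that are absorbed by the left-hand side for $\tau$ large; the transmission terms $\theta_\varphi,\Theta_\varphi$ and the traces localize likewise. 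In each chart I would further freeze the coefficients $A_\pm$ and $d\varphi_\pm$ at a point of $\Sigma$, the error being $O(|x|)$ times a second-order operator, hence absorbable after taking the support small and $\tau$ large, so that one is reduced to operators of the form $\P_\pm=(D+i\tau\,d\varphi_\pm)\cdot A_\pm(D+i\tau\,d\varphi_\pm)$ with $A_\pm$ constant symmetric positive-definite and $\varphi_\pm$ affine in the tangential variables plus the quadratic-in-$x_n$ term of \eqref{1.toy003} — this is where we must first \emph{choose} $\varphi$: pick $\varphi_\pm$ so that on $\Sigma$ the ratio condition \eqref{1.toy011} holds for the associated tangential symbols $m_\pm$ (possible by taking the normal derivative $\alpha_+$ large relative to $\alpha_-$), and add the convexifying term $\beta x_n^2/2$ with $\beta$ large.

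Next I would perform the factorization of Step~1. After a further linear change in the tangential variables diagonalizing the tangential part, $\P_\pm$ has principal symbol $c_n^\pm\big((\xi_n+\tau\varphi_\pm')^2+m_\pm(\xi')^2\big)$ with $m_\pm\gtrsim|\xi'|$, and modulo lower-order symbols
\[
\P_\pm = c_n^\pm\big(D_n+i(\tau\varphi_\pm'\pm m_\pm)\big)\big(D_n+i(\tau\varphi_\pm'\mp m_\pm)\big)+R_\pm,
\]
with $R_\pm$ of order $\le 1$ in $(\tau,\xi')$; here $m_\pm=m_\pm(x',D')$ is a first-order tangential pseudo-differential operator with parameter $\tau$, and the symbol calculus with parameter (Hörmander class $S(\langle(\tau,\xi')\rangle^m,\ldots)$, as in Chapters~17--18 of \cite{MR1481433}) controls $R_\pm$ and the commutators $[D_n,m_\pm]$. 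I would then split the tangential phase space $(\tau,\xi')$ into the two microlocal zones via a smooth partition: the zone $\{\tau\alpha_+\ge m_+(\xi')\}$ (enlarged slightly) where $f_+\ge 0$, treated by Step~2, and its complement where $f_+<0$ and, by \eqref{1.toy011}, also $f_-<0$, treated by Step~3. In each zone I would establish the one-dimensional-in-$x_n$ estimates \eqref{eq: nice estimate e positive} and its negative-sign variant, now with operator-valued $m_\pm$ and with the integration-by-parts boundary terms at $x_n=0^\pm$ retained explicitly; the elliptic factor containing $e_\pm=\tau\alpha_\pm+m_\pm\gtrsim\langle(\tau,\xi')\rangle$ always yields the full $\tau^{3/2}\|v\|+\tau^{1/2}\|\nabla v\|$ interior bound plus the trace $\tau^{1/2}|(\nabla v)_{|\Sigma}|$ and $\tau^{3/2}|v_{|\Sigma}|$ bounds on the corresponding side.

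The crux — and the step I expect to be the main obstacle — is the gluing at the interface in the bad zone (Step~3), where neither side alone controls its trace: I would use the first transmission condition $v_+-v_-=\theta_\varphi$ to identify the traces up to $\theta_\varphi$, and the second condition \eqref{eq: trans cond varphi} to identify the quantities $\mathcal V_\pm=c_n^\pm(D_nv_\pm+i\tau\alpha_\pm v_\pm)_{|\Sigma}$ up to $\Theta_\varphi$ (the discrepancy being exactly the anisotropic analogue, using $A_\pm\nu$, of the scalar computation in \eqref{1.toy010}–\eqref{1.toy009}); subtracting the two boundary identities then gives control of $i(c_n^-m_-+c_n^+m_+)v_{|\Sigma}$ modulo $\mathcal T_{\theta,\Theta}$, and since $c_n^-m_-+c_n^+m_+$ is elliptic of order one with parameter $\tau$, this yields the missing $\tau^{3/2}|v_{|\Sigma}|+\tau^{1/2}|(\nabla_\Sigma v)_{|\Sigma}|$ bound. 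Feeding that trace bound back into the negative-factor estimate on each side upgrades it to the full interior estimate there as well. The delicate points within this step are (i) making sense of, and estimating, $m_\pm(x',D')$ acting on the traces and commuting it past the zone cutoffs — handled by the parameter-dependent tangential calculus, with remainders absorbed for $\tau$ large; (ii) checking that the non-tangential (normal) component of $A_\pm\nu$ does not spoil the cancellation $\mathcal V_+-\mathcal V_-=\Theta_\varphi$, which it does not because after flattening $\Sigma$ the conormal is $dx_n$ and $\langle A_\pm\nu,dx_n\rangle=c_n^\pm$ in the diagonalized coordinates; and (iii) tracking the powers of $\tau$ so that the normal-derivative traces come out with weight $\tau^{1/2}$ and the $L^2(\Sigma)$ trace of $v$ with weight $\tau^{3/2}$. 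Finally, Step~4: add the two zonal estimates (the partition-of-unity multipliers in $(\tau,\xi')$ are bounded in the calculus), sum over the charts of the partition of unity, absorb all localization and freezing errors into the left-hand side by choosing the supports small and then $\tau_1$ large, and undo the linear tangential changes of variables; this produces the stated inequality with $\mathcal T_{\theta,\Theta}$ on the left, completing the proof.
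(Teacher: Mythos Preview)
Your overall architecture---the first-order factorization, the dichotomy into the zone where $f_+\gtrsim\lambda$ versus the zone where (by the ratio condition on $\alpha_+/\alpha_-$) $f_-\lesssim-\lambda$, the use of the two transmission conditions in the bad zone to control $(c_n^-m_-+c_n^+m_+)v_{|\Sigma}$, and the final patching---is exactly the scheme the paper follows in Sections~\ref{sec: estimate 1st order factor} and~\ref{Sec: proof Carleman}.

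There is, however, a genuine gap at the step ``freeze the coefficients $A_\pm$ and $d\varphi_\pm$ at a point of $\Sigma$, the error being $O(|x|)$ times a second-order operator, hence absorbable after taking the support small and $\tau$ large.'' This does \emph{not} work for a sub-elliptic Carleman estimate with half-derivative loss. The freezing error is of size $\epsilon\bigl(\tau^2\norm{v}+\tau\norm{\nabla v}+\norm{\nabla^2 v}\bigr)$ with $\epsilon$ the diameter of the support, whereas the estimate you are proving only delivers $\tau^{3/2}\norm{v}+\tau^{1/2}\norm{\nabla v}+\tau^{-1/2}\norm{\nabla^2 v}$ on the right. Each comparison therefore requires $\epsilon\lesssim\tau^{-1/2}$, which fails for a $\tau$-independent localization; taking $\tau$ large makes things worse, not better. (This is precisely why the paper's ``negligible'' class $\Psi^1$ in \eqref{3.classe} is only first order in $(\xi_n,\xi',\tau)$: second-order perturbations, even with small coefficients, are not absorbable.)

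The paper avoids this by never freezing: it works with the variable coefficients throughout via the tangential calculus with large parameter $\ST^m$ (Section~\ref{section: semi-classical calculus}), so that $m_\pm=m_\pm(x,\xi')$ remain genuine symbols. The sub-elliptic factor $\P_{F_\pm}$ is then handled not by a constant-coefficient ODE computation but by a G{\aa}rding-type argument (Lemmas~\ref{lemma: pre-garding ineqality} and~\ref{lemma: increasing root}): one shows that $\mu f_\pm^2+\tau\{\xi_n+s_\pm,f_\pm\}\ge C\lambda^2$ for $\mu$ large, using the sub-ellipticity condition of Lemma~\ref{lemma: effective sub-ellipticiy condition}, and applies the sharp G{\aa}rding inequality. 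This is the variable-coefficient substitute for your ``one-dimensional-in-$x_n$ estimates'' and is where the convexity parameter $\beta$ is actually used. If you drop the freezing step and instead run your Steps~2--4 with $m_\pm(x,D')$ as honest tangential pseudo-differential operators, using G{\aa}rding at the sub-elliptic point, your plan becomes the paper's proof.
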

Here, $\nabla_\Sigma$ denotes the tangential gradient to $\Sigma$.
The proof of this proposition will occupy a large part of the
remainder of the article (Sections~\ref{sec: estimate 1st order
  factor} and \ref{Sec: proof Carleman}) as it implies the result of
the following theorem, a non-homogenous version of 
Theorem~\ref{1.thm.main}.
%%%%%%%%%%%%%%%%%%%%%%%%
% theorem              %
%%%%%%%%%%%%%%%%%%%%%%%%
\begin{theorem}
  \label{thm.main - nonhomogeneous}
  Let $\Omega, \Sigma, \L,\W^{\theta,\Theta}_0$ be as in
  \eqref{2.ome}, \eqref{2.ope} and \eqref{eq: trans space
    nonhomogeneous}.  Then for any compact subset $K$ of $\Omega$,
  there exist a weight function $\varphi$ satisfying \eqref{2.wei} and
  positive constants $C$, $\tau_1$ such that for all $\tau\ge \tau_1$
  and all $w\in \W$ with $\supp w\subset K$,
  \begin{multline}
    \label{eq: Carleman main - nonhomogeneous}
    C\big(\norm{H_- e^{\tau \varphi_-}\L_- w_-}_{L^2(\R^n)}
    + \norm{H_+ e^{\tau \varphi_+}\L_+ w_+}_{L^2(\R^n)} + T_{\theta,\Theta} \big)
    \\
    \geq \tau^{3/2}\norm{e^{\tau \varphi} w}_{L^2(\R^n)}
    +\tau^{1/2} \big(\norm{H_{+}e^{\tau \varphi} \nabla w_{+}}_{L^2(\R^n)}
    +\norm{H_{-}e^{\tau \varphi} \nabla w_{-}}_{L^2(\R^n)}\big)
    \\
    +\tau^{3/2} 
    \val{e^{\tau \varphi} w_\pm}_{L^2(\Sigma)}
    +\tau^{1/2}  
    \val{e^{\tau\varphi} \nabla w_{\pm}}_{L^2(\Sigma)}.
  \end{multline}
where $\displaystyle{
T_{\theta,\Theta} = \tau^{3/2} \val{e^{\tau \varphi_{|\Sigma}} \theta}_{L^2(\Sigma)} 
     +  \tau^{1/2} \val{e^{\tau \varphi_{|\Sigma}} \nabla_\Sigma \theta}_{L^2(\Sigma)} 
     +  \tau^{1/2} \val{e^{\tau \varphi_{|\Sigma}} \Theta}_{L^2(\Sigma)}}$.
\end{theorem}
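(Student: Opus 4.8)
The plan is to deduce Theorem~\ref{thm.main - nonhomogeneous} directly from Proposition~\ref{mainprop}, with no new analysis, by means of the conjugation $v=e^{\tau\varphi}w$; I would use the same weight function $\varphi$ and the same threshold $\tau_1$ furnished by the proposition (enlarged if needed). Fix a compact $K\subset\Omega$ and take $w\in\W_0^{\theta,\Theta}$ with $\supp w\subset K$. By the very definition \eqref{2.doublev}, $v:=e^{\tau\varphi}w$ lies in $\W_\tau^{\theta,\Theta}$, and since $e^{\tau\varphi}$ never vanishes, $\supp v=\supp w\subset K$; thus Proposition~\ref{mainprop} applies to $v$. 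The crucial identity, recorded just above \eqref{2.ppm}, is the conjugation formula $\P_\pm v_\pm=e^{\tau\varphi_\pm}\L_\pm w_\pm$, so that $\norm{H_\pm\P_\pm v_\pm}_{L^2(\R^n)}=\norm{H_\pm e^{\tau\varphi_\pm}\L_\pm w_\pm}_{L^2(\R^n)}$, which matches the first two source terms of \eqref{eq: Carleman main - nonhomogeneous}.

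The rest is bookkeeping: I would match the boundary source term $\mathcal T_{\theta,\Theta}$ of Proposition~\ref{mainprop} with $T_{\theta,\Theta}$, and the $v$-norms of Proposition~\ref{mainprop} with the $w$-norms of \eqref{eq: Carleman main - nonhomogeneous}. Since $\theta_\varphi=e^{\tau\varphi_{|\Sigma}}\theta$, differentiating along $\Sigma$ gives $\nabla_\Sigma\theta_\varphi=e^{\tau\varphi_{|\Sigma}}\bigl(\nabla_\Sigma\theta+\tau\theta\,\nabla_\Sigma\varphi_{|\Sigma}\bigr)$; as $\varphi_{|\Sigma}$ is smooth on the compact $K\cap\Sigma$, this yields $\tau^{1/2}\val{\nabla_\Sigma\theta_\varphi}_{L^2(\Sigma)}\le\tau^{1/2}\val{e^{\tau\varphi_{|\Sigma}}\nabla_\Sigma\theta}_{L^2(\Sigma)}+C\tau^{3/2}\val{e^{\tau\varphi_{|\Sigma}}\theta}_{L^2(\Sigma)}$, hence $\mathcal T_{\theta,\Theta}\le C'T_{\theta,\Theta}$ (the $\theta_\varphi$ and $\Theta_\varphi$ contributions already coincide with the corresponding terms of $T_{\theta,\Theta}$). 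Likewise, from $v_\pm=e^{\tau\varphi_\pm}w_\pm$ and $\nabla v_\pm=e^{\tau\varphi_\pm}\bigl(\nabla w_\pm+\tau w_\pm\,\nabla\varphi_\pm\bigr)$ with $\val{\nabla\varphi_\pm}$ bounded on $K$, one gets $\norm{v}_{L^2(\R^n)}=\norm{e^{\tau\varphi}w}_{L^2(\R^n)}$ and $\val{v_\pm}_{L^2(\Sigma)}=\val{e^{\tau\varphi}w_\pm}_{L^2(\Sigma)}$, together with $\tau^{1/2}\norm{H_\pm e^{\tau\varphi}\nabla w_\pm}_{L^2(\R^n)}\le\tau^{1/2}\norm{H_\pm\nabla v_\pm}_{L^2(\R^n)}+C\tau^{3/2}\norm{v}_{L^2(\R^n)}$ and its boundary analogue $\tau^{1/2}\val{e^{\tau\varphi}\nabla w_\pm}_{L^2(\Sigma)}\le\tau^{1/2}\val{\nabla v_\pm}_{L^2(\Sigma)}+C\tau^{3/2}\val{v_\pm}_{L^2(\Sigma)}$; the extra $\tau^{3/2}$ terms produced here already appear on the right-hand side of the estimate of Proposition~\ref{mainprop}, so no largeness of $\tau$ is needed to absorb them.

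Putting these together, the right-hand side of \eqref{eq: Carleman main - nonhomogeneous} is bounded by a fixed multiple of the right-hand side of the estimate of Proposition~\ref{mainprop}, while that estimate's left-hand side $C\bigl(\norm{H_-\P_-v_-}_{L^2(\R^n)}+\norm{H_+\P_+v_+}_{L^2(\R^n)}+\mathcal T_{\theta,\Theta}\bigr)$ is, by the conjugation identity together with $\mathcal T_{\theta,\Theta}\le C'T_{\theta,\Theta}$, bounded by a fixed multiple of the left-hand side of \eqref{eq: Carleman main - nonhomogeneous}; chaining the two inequalities through Proposition~\ref{mainprop} gives \eqref{eq: Carleman main - nonhomogeneous} with a new constant $C$ and the same $\tau_1$. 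I do not expect any genuine obstacle in this reduction --- it is elementary --- so the real difficulty, and the bulk of the paper, is the proof of Proposition~\ref{mainprop} itself: the pseudodifferential factorization of $\P_\pm$ into first-order factors, the estimates for those factors on the half-spaces $\{\pm x_n>0\}$, the dichotomy on the sign of the non-elliptic factors, the exploitation of the second transmission condition to couple the two sides and recover a tangential $H^1$-control of $v_{|\Sigma}$, and the microlocal patching, all sketched in Section~\ref{sec: sketch of the proof} and carried out in Sections~\ref{sec: estimate 1st order factor} and \ref{Sec: proof Carleman}.
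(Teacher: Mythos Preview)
Your proposal is correct and follows essentially the same approach as the paper: set $v=e^{\tau\varphi}w$, invoke Proposition~\ref{mainprop}, use the conjugation identity $\P_\pm v_\pm=e^{\tau\varphi_\pm}\L_\pm w_\pm$, and commute $\nabla$ with $e^{\tau\varphi}$ to pass from $v$-norms to $w$-norms, the commutator terms being absorbed by the $\tau^{3/2}$ terms already present on the right-hand side. You are in fact slightly more explicit than the paper in justifying $\mathcal T_{\theta,\Theta}\le C'T_{\theta,\Theta}$ via the product rule on $\nabla_\Sigma\theta_\varphi$.
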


\medskip
Theorem~\ref{1.thm.main} corresponds to the case $\theta=\Theta=0$ since
by \eqref{2.trt} we then have
\begin{align*}
&\norm{e^{\tau \varphi}\L w}_{L^2(\R^n)}=
\norm{H_- e^{\tau \varphi_-}\L_- w_-}_{L^2(\R^n)}
    + \norm{H_+ e^{\tau \varphi_+}\L_+ w_+}_{L^2(\R^n)} 
\end{align*} 

\begin{rem}
  It is often useful to have such a Carleman estimate at hand for the
  case non-homogeneous transmission conditions, for examples when on
  tries to patch such local estimates together in the \nhd of the
  interface.
  
  Here, we derive local Carleman estimates.  We can in fact consider
  similar geometrical situation on a Riemannian manifold (with
  or without boundary) with an metric exhibiting jump discontinuities
  across interfaces.  For the associated Laplace-Beltrami operator the
  local estimates we derive can be patched together to yield a
  global estimate. We refer to \cite[Section 5]{LRR:09} for such
  questions.
\end{rem}

\begin{proof}[Proof that Proposition \ref{mainprop}
implies Theorem~\ref{thm.main - nonhomogeneous}]
Replacing
$v$ by $e^{\tau \varphi} w$,
we get
\begin{multline}\label{884412}
  \norm{H_- e^{\tau \varphi_-}\L_- w_-}_{L^2(\R^n)}
    + \norm{H_+ e^{\tau \varphi_+}\L_+ w_+}_{L^2(\R^n)} 
    + T_{\theta,\Theta}\\
 \gtrsim
  \tau^{3/2}\norm{e^{\tau \varphi}w}_{L^2(\R^n)}
  +
  \tau^{1/2}\big(\norm{H_{+} \nabla e^{\tau \varphi}w_{+}}_{L^2(\R^n)}
  + \norm{H_{-}\nabla e^{\tau \varphi}w_{-}}_{L^2(\R^n)}\big)\\
  +\tau^{3/2} \val{e^{\tau \varphi}{w_\pm}}_{L^2(\Sigma)}
  +\tau^{1/2} \val{ \nabla e^{\tau \varphi}{w_\pm}}_{L^2(\Sigma)}.
 \end{multline}
Commuting $\nabla$ with $e^{\tau \varphi}$ produces
\begin{multline*}
  C\big(\norm{H_- e^{\tau \varphi_-}\L_- w_-}_{L^2(\R^n)}
    + \norm{H_+ e^{\tau \varphi_+}\L_+ w_+}_{L^2(\R^n)} 
    + T_{\theta,\Theta}\big)\\
 +C_{1}\tau^{3/2}\norm{e^{\tau \varphi}w}_{L^2(\R^n)}
 +C_{2}\tau^{3/2}\big(
 \val{e^{\tau \varphi}{w_\pm}_{\vert \Sigma}}_{L^2(\Sigma)}\big)\\
 \ge
   \tau^{1/2}\norm{H_{-}e^{\tau \varphi} Dw_{-}}_{L^2(\R^n)}
  + \tau^{1/2}\norm{H_{+}e^{\tau \varphi} Dw_{+}}_{L^2(\R^n)}
  +\tau^{3/2}\norm{e^{\tau \varphi}w}_{L^2(\R^n)}\\
   +\tau^{1/2}  \val{e^{\tau \varphi}{Dw_{\pm}}}_{L^2(\Sigma)}
   + \tau^{3/2}
   \val{e^{\tau \varphi}{w_\pm}}_{L^2(\Sigma)}   
   ,
 \end{multline*}
but from \eqref{884412} we have
 \begin{multline*}
 C_{1}\tau^{3/2}\norm{e^{\tau \varphi}w}
 +C_{2}\tau^{3/2}\val{e^{\tau \varphi}w}
 \\ \le C\max(C_{1},C_{2}) \big(
 \norm{H_- e^{\tau \varphi_-}\L_- w_-}_{L^2(\R^n)}
    + \norm{H_+ e^{\tau \varphi_+}\L_+ w_+}_{L^2(\R^n)} 
    + T_{\theta,\Theta}\big),
 \end{multline*}
 proving the implication.
\end{proof}

\subsection{Description in local coordinates}
\label{subcoord}

Carleman estimates of types \eqref{eq: Carleman main}
  and \eqref{eq: Carleman main - nonhomogeneous} can be handled
  locally as they can be patched together.  Assuming as we may that the
hypersurface $\Sigma$ is given locally by the equation $\{x_{n}=0\}$,
we have, using the Einstein convention on repeated indices $j\in
\{1,\dots, n-1\}$, and noting from the ellipticity condition that
$a_{nn}>0$ (the matrix $A(x)=(a_{jk}(x))_{1\le j,k\le n}$),
\begin{align*}
\L&= D_{n}a_{nn}D_{n}+D_{n}a_{nj} D_{j}+D_{j}a_{jn}D_{n}+D_{j}a_{jk}D_{k},
\\
&= D_{n}a_{nn}\bigl( D_{n}+{a_{nn}^{-1}a_{nj} D_{j}}\bigr)+D_{j}a_{jn}D_{n}+D_{j}a_{jk}D_{k},
\end{align*}
With $T=a_{nn}^{-1}a_{nj} D_{j}$
we have
$$
\L= \bigl( D_{n}+T^*)a_{nn}\bigl( D_{n}+T\bigr)
-T^*a_{nn}D_{n}-T^*a_{nn}T+D_{j}a_{jn}D_{n}+D_{j}a_{jk}D_{k}.
$$
and since
 $T^*=D_{j}a_{nn}^{-1} a_{nj}$, we have
$T^*a_{nn}D_{n}=D_{j}a_{nj}D_{n}=D_{j}a_{jn}D_{n}$
and
\begin{equation}\label{3.kbgf44}
\L= \bigl( D_{n}+T^*)a_{nn}\bigl( D_{n}+T\bigr)+D_{j}b_{jk}D_{k},
\end{equation}
where the $(n-1)\times (n-1)$ matrix $(b_{jk})$ is positive-definite
since with $\xi'=(\xi_{1},\dots, \xi_{n-1})$ and $\xi=(\xi',
\xi_{n})$,
$$
\poscal{B\xi'}{\xi'}=\ssum_{1\le j,k\le n-1}b_{jk}\xi_{j}\xi_{k}
=\poscal{A\xi}{\xi},
$$ where
$a_{nn}\xi_{n}=-\ssum_{1\le j\le n-1}a_{nj}\xi_{j}$.  Note also that
$b_{jk}=a_{jk}-({a_{nj}a_{nk}}/{a_{nn}}).  $
%%%%%%%%%%%%%%%%%%%%%%%%
% remark               %
%%%%%%%%%%%%%%%%%%%%%%%%
\begin{rem}
  The positive-definite quadratic form $B$ is the restriction of
  $\poscal{A\xi}{\xi}$ to the hyperplane $\H$ defined by $
  \poi{\poscal{A\xi}{\xi}}{x_{n}}= {\p
    _{\xi_{n}}}\bigl(\poscal{A\xi}{\xi}\bigr)=0, $ where
  $\poi{\cdot}{\cdot}$ stands for the Poisson bracket.  In fact the
  principal symbol of $\L$ is $\poscal{A(x) \xi}{\xi}$ and if
  $\Sigma$ is defined by the equation $\psi(x)=0$ with $d\psi\not=0$
  at $\Sigma$, we have
  $$
  \frac12\Bigl\{{\poscal{A(x) \xi}{\xi}},{\psi}\Bigr\}
  =\poscal{A(x) \xi}{d\psi(x)}
  $$
  so that $\H_{x}=\bigl(A(x) d\psi(x)\bigr)^\perp=\{\xi\in
  T_{x}^*(\Omega), \poscal{\xi}{A(x) d\psi(x)}_{T_{x}^*(\Omega),
    T_{x}(\Omega)}=0\}.  $ When $x\in \Sigma$, that set does not
  depend on the choice of the defining function $\psi$ of $\Sigma$ and
  we have simply
  $$
  \H_{x}=\bigl(A(x)\nu(x)\bigr)^\perp=
  \{\xi\in T_{x}^*(\Omega),\poscal{\xi}{A(x) \nu(x)}_{T_{x}^*(\Omega), T_{x}(\Omega)}
  =0\}
  $$
  where $\nu(x)$ is the conormal vector to $\Sigma$ at $x$ (recall
  that from Remark \ref{2.rem.geomet}, $\nu(x)$ is a cotangent vector
  at $x$, $A(x)\nu(x)$ is a tangent vector at $x$).  Now, for $x\in
  \Sigma$, we can restrict the quadratic form $A(x)$ to $\mathcal
  H_{x}$: this is the positive-definite quadratic form $B(x)$,
  providing a coordinate-free definition.
\end{rem}
For $w\in \W^{\theta,\Theta}_0$, we have
\begin{align}\label{2.opp}
  \L_\pm w_\pm= (D_{n}+T_{\pm}^*)a_{nn}^\pm(D_{n}+T_{\pm})w_{\pm}
  +D_{j}b_{jk}^\pm D_{k}w_{\pm}
\end{align}
and the non-homogeneous transmission conditions \eqref{eq: trans cond}
 read
\begin{equation}\label{eq: trans cond 2}
  w_{+}-w_{-}= \theta, \quad a_{nn}^+(D_{n}+T_{+})w_{+} - a_{nn}^-(D_{n}+T_{-})w_{-}=\Theta ,
  \quad\text{at $\Sigma$.}
\end{equation}
%% subsection
\subsection{Pseudo-differential factorization on each side}
\label{sec: pseudo factor}
At first we consider the weight function $\varphi = H_+ \varphi_+ +
H_- \varphi_-$ with $\varphi_\pm$ that solely depend on $x_n$. Later
on we shall allow for some dependency upon the tangential variables $x'$
(see Section~\ref{sec: convexification}).
We define for $m\in \R$ the class of tangential
standard symbols $\S^m$ as the smooth functions on
$\R^n\times\R^{n-1}$ such that, for all $(\alpha, \beta)\in \N^n\times
\N^{n-1}$,
\begin{equation}\label{2.pseudo}
  \sup_{(x,\xi')\in \R^n\times \R^{n-1}}
  \valjp{\xi'}^{-m+\val\beta} \val{(\p_{x}^\alpha\p_{\xi'}^\beta a)(x,\xi')}<\io,
\end{equation}
with $\valjp{\xi'} = \big(1 + |\xi'|^2\big)^\hf$.  Some basic
properties of standard pseudo-differential operators are recalled in
Appendix \ref{app.sub.pseudo}.  
Section \ref{subcoord} and formul\ae~\eqref{2.ppm}, \eqref{2.opp} give
\begin{align}
  \label{eq: conjugated operator}
  \P_{\pm} = \big(D_{n}+i \tau \varphi_\pm' + T_{\pm}^*\big)a_{nn}^\pm
  \big(D_{n}+i \tau \varphi_\pm' +T_{\pm}\big) +D_{j}b_{jk}^\pm D_{k}.
\end{align}
We define $m_{\pm}\in \S^1$ such that 
\begin{align}
  \label{eq: m pm for xi' large}
  \text{for $\val {\xi'}\ge 1$, }
  m_\pm = \Big( \frac{b^\pm_{j k}}{a^\pm_{n n}} \xi_j \xi_k\Big)^\hf,
  \quad m_\pm \geq C \valjp{\xi'}, 
  \quad
  M_{\pm}=\op{m_{\pm}}.
\end{align}
We have then
$M_{\pm}^2\equiv D_{j}b_{jk}^\pm D_{k}\mod\op{\S^1}$.

We define
\begin{equation}\label{3.classe}
  {\Psi^1}=\op{\S^1}+\tau\op{\S^0}+\op{\S^0}D_{n}.
\end{equation}
Modulo the operator class $\Psi^1$ we may write 
\begin{equation}\label{001}
  \P_{+}\equiv \P_{E+} a_{nn}^+ \P_{F+}, 
  \qquad 
  \P_{-}\equiv \P_{F-} a_{nn}^- \P_{E-},
\end{equation}
where 
\begin{equation}\label{002}
  \P_{E\pm} =  D_{n}+S_{\pm}+i(\underbrace{\tau \varphi'_{\pm}+M_{\pm}}_{E_{\pm}}), 
  \quad 
  \P_{F\pm} = D_{n}+S_{\pm}+i(\underbrace{\tau \varphi'_{\pm}-M_{\pm}}_{F_{\pm}}),
\end{equation}
with
\begin{equation}
\label{eq: def S}
S_{\pm}=s^w(x,D'), \quad 
s_\pm = \ssum_{1\le j\le n-1}\frac{a_{nj}^{\pm}}{a_{nn}^{\pm}}\xi_{j},
\quad\text{so that}\ S_{\pm}^\ast=S_{\pm},\ \
S_{\pm}=T_{\pm}+\frac12\dive T_{\pm},
\end{equation}
where  
  \begin{equation}\label{vectT}
\text{$T_{\pm}$ is the vector field}
\quad
\ssum_{1\le j\le
  n-1}\frac{a_{nj}^{\pm}}{i a_{nn}^{\pm}}\p_{j}.
\end{equation}

We denote by $f_\pm$ and $e_\pm$ the homogeneous principal
symbols of $F_\pm$ and
  $E_\pm$ respectively, determined modulo the symbol class $\S^1 + \tau \S^0$.
The transmission conditions
\eqref{eq: trans cond 2} with our choice of coordinates read, at $x_{n}=0$,
\begin{equation}
  \label{eq: transmission conditions varphi}
  \begin{cases}
    v_{+}-v_{-}  = \theta_\varphi = e^{\tau \varphi_{|x_n=0}} \theta,\\
    a_{nn}^+(D_{n}+T_{+}+i\tau\varphi'_{+})v_{+} - 
    a_{nn}^-(D_{n}+T_{-}+i\tau\varphi'_{-})v_{-}= \Theta_\varphi 
    = e^{\tau \varphi_{|x_n=0}} \Theta.
    \end{cases}
\end{equation}

%%%%%%%%%%%%%%%%%%%%%%%%
% remark               %
%%%%%%%%%%%%%%%%%%%%%%%%
\begin{rem}
  \label{remark: symbol modulo psi1}
  Note that the Carleman estimate we shall prove is insensitive to
  terms in $\Psi^1$ in the conjugated operator
  $\P$. Formul\ae~\eqref{001}
  and \eqref{002} for $\P_+$ and $\P_-$
  will thus be the base of our analysis.
\end{rem}
\begin{rem}
  \label{remark: roots}
  In the articles \cite{LRR:10,LRR:09}, the zero crossing of the roots
  of the symbol of $\P_\pm$, as seen as a polynomial in $\xi_n$, is
  analyzed. Here the factorization into first-order operators isolates
  each root. In fact, $f_\pm$ changes sign and we shall impose a
  condition on the weight function at the interface to obtain a certain
  scheme for this change of sign. See Section~\ref{Sec: proof
    Carleman}.
\end{rem}

\subsection{Choice of weight-function}
\label{sec: choice weight function}
The weight function can be taken of the form 
\begin{equation}
  \label{30weight}
  \varphi_{\pm}(x_{n})=
  \alpha_{\pm}x_{n}+\beta x_{n}^2/2,\quad\alpha_{\pm}>0,\quad \beta>0.
\end{equation}
The choice of the parameters $\alpha_{\pm}$ and $\beta$ will be done below
and that choice will take into account the geometric data of our
problem: $\alpha_\pm$ will be chosen to fulfill a geometric condition
at the interface and $\beta>0$ will be chosen large.  Here, we shall
require $\varphi' \geq 0$, that is, an ``observation'' region on the
\rhs of $\Sigma$. As we shall need $\beta$ large, this amounts to
working in a small \nhd of the interface, \ie, $|x_n|$ small.  Also,
we shall see below (Section~\ref{sec: convexification}) that this
weight can be perturbed by any smooth function with a small gradient.

Other choices for the weight functions are possible.  In fact, two
sufficient conditions can put forward. We shall describe them now.

The operators $M_{\pm}$ have a principal symbol $m_{\pm}(x,\xi')$ in
$\mathcal S^1$, which is positively-homogeneous\footnote{The
  homogeneity property means as usual $m_{\pm}(x,\rho \xi')=\rho
  m_{\pm}(x,\xi')$ for $\rho\ge 1, \val{\xi'}\ge 1$.}  of degree 1 and
elliptic, i.e. there exists $\lambda^{\pm}_{0}, \lambda^{\pm}_{1}$
positive such that for $\val {\xi'}\ge 1, x\in \R^n$,
\begin{equation}\label{3.ell012}
\lambda^{\pm}_{0}\val{\xi'}\le m_{\pm}(x,\xi')\le \lambda_{1}^{\pm}\val{\xi'}.
\end{equation}
We choose $\varphi'_{|x_n=0^\pm}=\alpha_\pm$ 
such that
\begin{equation}\label{mainassu}
  \frac{\alpha_{+}}{\alpha_{-}}>
  \sup_{x',\xi' \atop |\xi'|\geq 1} \frac{m_+(x',\xi')\br}
  {m_-(x',\xi')\bl}.
\end{equation}
The consequence of this condition will be made clear in
Section~\ref{Sec: proof Carleman}. We shall also prove that this condition
is sharp in Section~\ref{sec: counter-example}: a strong violation of
this condition, \viz, $\alpha_{+}/\alpha_{-} < \sup (m_+ / m_-)_{|x_n=0}$, ruins any possibility of deriving a Carleman
estimate of the form of Theorem~\ref{1.thm.main}.

Condition~\eqref{mainassu} concerns the behavior of the weight
function at the interface. Conditions away from the
interface are also needed.  These conditions are more classical.  From~\eqref{eq:
  conjugated operator}, the symbols of $\P_\pm$, modulo the symbol
class $\S^1+ \tau \S^0 + \S^0 \xi_n$, are given by $p_\pm(x,\xi,\tau)
= a_{nn}^\pm \big( q_2^\pm + 2 i q^\pm_1\big)$, with 
\begin{align*}
  q^\pm_{2} = (\xi_n +s_\pm)^2 + \frac{b^\pm_{jk}}{a^\pm_{n n}}\xi_j
  \xi_k - \tau^2 (\varphi_\pm')^2, \qquad q^\pm_{1} = \tau
  \varphi_\pm' (\xi_n +s_\pm),
\end{align*}
for $\varphi$ solely depending on $x_n$, and from the construction of $m_\pm$, for $|\xi'|\geq 1$, we have
\begin{align}
  \label{eq: q2 for xi' large}
  q^\pm_{2} = (\xi_n+s_\pm)^2 + m_\pm^2 - (\tau \varphi_\pm')^2 
  = (\xi_n+s_\pm)^2 - f_\pm e_\pm.
\end{align}
We can then formulate the usual {\em sub-ellipticity} condition, with
{\em loss of a half-derivative}:
\begin{align}
  \label{eq: sub-ellipticity condition}
  q_{2}^\pm =0\ \text{and}\ q_{1}^\pm=0\quad \Longrightarrow 
  \quad \{ q_{2}^\pm, q_{1}^\pm\}>0.
\end{align}
It is important to note that this property is coordinate free.  For
second-order elliptic operators with real smooth coefficients this
property is necessary and sufficient for a Carleman estimate as that
of Theorem~\ref{1.thm.main} to hold (see \cite{MR0161012} or \eg
\cite{LL:09}).

With the weight functions provided in \eqref{30weight} we choose
$\alpha_\pm$ according to condition~\eqref{mainassu} and we choose
$\beta>0$ large enough and we restrict ourselves to a small \nhd
of $\Sigma$, \ie, $|x_n|$ small to have $\varphi' >0$, and so that
\eqref{eq: sub-ellipticity condition} is fulfilled.

%%%%%%%%%%%%%%%%%%%%%%%%
% remark               %
%%%%%%%%%%%%%%%%%%%%%%%%
\begin{rem}
Other ``classical'' forms for the weight function
$\varphi$ are also possible. For instance, one may use
$\varphi (x_n) = e^{\beta \phi(x_n)}$ with
the function $\phi$ depending solely on $x_n$ of the form
$$
\phi  = H_- \phi_- + H_+ \phi_+, \quad \phi_\pm \in \mooc(\R),
$$ such that $\phi$ is {\em continuous} and $|\phi_\pm'|\geq C >0$.
In this case, property \eqref{mainassu} can be fulfilled by properly
choosing $\phi'_{|x_n=0^\pm}$ and  \eqref{eq:
  sub-ellipticity condition} by choosing $\beta$ sufficiently large.
\end{rem}

Property~\eqref{eq: sub-ellipticity condition} concerns the conjugated
second-order operator. We show now that this condition concerns in
fact only one of the first-order terms in the pseudo-differential
factorization that we put forward above, namely $\P_{F\pm}$.

%%%%%%%%%%%%%%%%%%%%%%%%
% lemma                %
%%%%%%%%%%%%%%%%%%%%%%%%
\begin{lem}
  \label{lemma: effective sub-ellipticiy condition}
  There exist $C>0$, $\tau_1>1$ and $\delta>0$ such that for $\tau\geq \tau_1$
  $$|f_\pm| \leq \delta \lambda \quad \Longrightarrow \quad 
  C^{-1} \tau \leq |\xi'| \leq C \tau 
  \ \ \text{and}\ \  \{ \xi_n + s_\pm, f_\pm\} \geq C' \lambda, $$
  with $\lambda^2 = \tau^2 + |\xi'|^2$.
\end{lem}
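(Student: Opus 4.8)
The plan is to prove the lemma in two independent steps: a \emph{frequency localization}, relying only on the smallness of $|f_\pm|$ relative to $\lambda$, followed by a \emph{Poisson bracket estimate}, relying on the largeness of the parameter $\beta$ in the weight \eqref{30weight}. Throughout one works in the small neighborhood of $\Sigma$ fixed in Section~\ref{sec: choice weight function}, on which $\varphi_\pm' = \alpha_\pm + \beta x_n$ is bounded between two positive constants, and one uses the representative $f_\pm = \tau\varphi_\pm' - m_\pm$ of the principal symbol together with the ellipticity bounds $\lambda_0^\pm |\xi'| \le m_\pm(x,\xi') \le \lambda_1^\pm|\xi'|$, valid for $|\xi'|\ge 1$ by \eqref{3.ell012} and \eqref{eq: m pm for xi' large}.

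For the frequency localization I would rule out the two extreme regimes directly. If $|\xi'| \le \eta\tau$ with $\eta$ small, then $m_\pm \le \lambda_1^\pm\eta\tau$ is dominated by $\tau\varphi_\pm' \gtrsim \tau$, so $f_\pm \gtrsim \tau \gtrsim \lambda$; if $|\xi'| \ge R\tau$ with $R$ large, then $m_\pm \ge \lambda_0^\pm R\tau$ dominates $\tau\varphi_\pm' \lesssim \tau$, so $|f_\pm| = m_\pm - \tau\varphi_\pm' \gtrsim |\xi'| \gtrsim \lambda$. Choosing $\delta$ smaller than the constants implicit in these two inequalities, the hypothesis $|f_\pm| \le \delta\lambda$ forces $\eta\tau \le |\xi'| \le R\tau$, which is the asserted two-sided bound $C^{-1}\tau \le |\xi'| \le C\tau$; taking $\tau_1 \ge \eta^{-1}$ also yields $|\xi'| \ge 1$, so that $m_\pm$ takes the explicit homogeneous form \eqref{eq: m pm for xi' large} and all symbol estimates apply from here on.

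For the Poisson bracket I would compute it by hand. Since $f_\pm$ is independent of $\xi_n$ and $\xi_n + s_\pm$ is affine in $\xi_n$ with constant $\xi_n$-coefficient, one gets $\poi{\xi_n + s_\pm}{f_\pm} = \p_{x_n}f_\pm + \sum_{1\le j\le n-1}\big(\p_{\xi_j}s_\pm\,\p_{x_j}f_\pm - \p_{x_j}s_\pm\,\p_{\xi_j}f_\pm\big)$, which is indeed independent of $\xi_n$, as the statement requires. Here $\p_{x_n}f_\pm = \tau\varphi_\pm'' - \p_{x_n}m_\pm = \tau\beta - \p_{x_n}m_\pm$, while $\p_{x_n}m_\pm$, $\p_{x_j}m_\pm$, $\p_{x_j}s_\pm$ (order $1$) and $\p_{\xi_j}m_\pm$, $\p_{\xi_j}s_\pm$ (order $0$) are all bounded, for $|\xi'| \ge 1$, by $C_0|\xi'|$ with $C_0$ depending only on the $\moo$-norms of $A_\pm$ over $K$ --- and, crucially, \emph{not} on $\beta$. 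Combining with the localization $|\xi'| \le C\tau$, $\lambda \le C'\tau$ from the first step, one obtains $\poi{\xi_n+s_\pm}{f_\pm} \ge \tau\beta - C_0 C\tau \ge \frac{\beta}{2}\tau \ge C''\lambda$ provided $\beta \ge 2C_0 C$, which is consistent with the choice of $\beta$ made in Section~\ref{sec: choice weight function}.

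The proof is not deep; the one point that must be handled carefully --- and the ``main obstacle'' in the bookkeeping sense --- is the order of quantification in the last step: the error constant $C_0$ in the Poisson bracket must be pinned down from the fixed geometric data ($\Omega$, $\Sigma$, $A_\pm$, $K$) \emph{before} $\beta$ is selected, so that enlarging $\beta$ genuinely closes the estimate. This is the very mechanism by which the half-derivative sub-ellipticity condition \eqref{eq: sub-ellipticity condition} was arranged: a short computation shows that on the characteristic set $\{\xi_n + s_\pm = 0,\ f_\pm = 0\}$ (reached by imposing $q_1^\pm = 0$, whence $\xi_n + s_\pm = 0$, and then $q_2^\pm = 0$, whence $f_\pm = 0$ since $e_\pm > 0$) one has $\poi{q_2^\pm}{q_1^\pm} = e_\pm\,\tau\varphi_\pm'\,\poi{\xi_n+s_\pm}{f_\pm}$, so \eqref{eq: sub-ellipticity condition} is carried entirely by the first-order factor $\P_{F\pm}$ and the lemma is simply its quantitative form.
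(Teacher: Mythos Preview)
Your proof is correct and takes a genuinely different route from the paper's. The paper argues indirectly: it first establishes, via a scaling $q_j(x,\xi)=\tau^2\tilde q_j(x,\xi/\tau)$ and a compactness argument on the zero set of $(\tilde q_1,\tilde q_2)$, the quantitative form $|q_1|,|q_2|\le\eta\tau^2\Rightarrow\{q_2,q_1\}\ge C\tau^3$ of the abstract sub-ellipticity condition~\eqref{eq: sub-ellipticity condition}; it then specializes to $\xi_n=-s_\pm$, writes $\{q_2,q_1\}=\tau e_\pm\varphi'_\pm\{\xi_n+s_\pm,f_\pm\}+\tau f_\pm\varphi'_\pm\{\xi_n+s_\pm,e_\pm\}$, and peels off the factor $\tau e_\pm\varphi'_\pm\sim\tau^2$ to isolate $\{\xi_n+s_\pm,f_\pm\}\gtrsim\tau$. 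Your argument bypasses condition~\eqref{eq: sub-ellipticity condition} entirely: you compute $\{\xi_n+s_\pm,f_\pm\}$ by hand and observe that the term $\tau\varphi''_\pm=\tau\beta$ dominates all remaining contributions, which are $O(|\xi'|)=O(\tau)$ with constants depending only on $A_\pm$ and $\alpha_\pm$, so that enlarging $\beta$ finishes. Your approach is shorter and more transparent for the concrete weight~\eqref{30weight}; the paper's has the advantage of applying to any weight satisfying~\eqref{eq: sub-ellipticity condition} (in particular the alternative $\varphi=e^{\beta\phi}$ mentioned there) and of exhibiting the lemma as the first-order avatar of the classical second-order condition, which is the point of the remark preceding the lemma. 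Your closing paragraph correctly identifies the factorization $\{q_2^\pm,q_1^\pm\}=e_\pm\tau\varphi'_\pm\{\xi_n+s_\pm,f_\pm\}$ on the characteristic set, which is exactly the bridge the paper uses in the other direction.
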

See Appendix~\ref{proof: lemma: effective sub-ellipticiy condition}
for a proof.  This is the form of the sub-ellipticty condition, with
loss of half derivative, that we shall use. This will be further
highlighted by the estimates we derive in Section~\ref{sec: estimate
  1st order factor} and by the proof of the main theorem.

% Estimates for first-order factors
\section{Estimates for first-order factors}
\label{sec: estimate 1st order factor}

Unless otherwise specified, the notation $\norm{\cdot}$ will stand for
the $L^2(\R^n)$-norm and $\val{\cdot}$ for the $L^2(\R^{n-1})$-norm.
The $L^2(\R^n)$ and $L^2(\R^{n-1})$ dot-products will be both denoted
by $\poscal{\cdot}{\cdot}$.

\subsection{Preliminary estimates}
\label{sec: prelim cut-off}

Most of our pseudo-differential arguments concern a
calculus with the large parameter $\tau\ge 1$:  with 
\begin{equation}\label{2.lam}
\lambda^2 = \tau^2 + |\xi'|^2,
\end{equation}
we define for $m\in \R$ the class of tangential symbols
$\ST^m$ as the smooth functions on $\R^n\times\R^{n-1}$,
depending on the parameter $\tau\ge 1$, such that, for all $(\alpha,
\beta)\in \N^n\times \N^{n-1}$,
\begin{equation}
  \sup_{(x,\xi')\in \R^n\times \R^{n-1}}
  \lambda^{-m+\val\beta} \val{(\p_{x}^\alpha\p_{\xi'}^\beta a)(x,\xi',\tau)}<\io.
\end{equation}
Some basic properties of the calculus of the associated
pseudo-differential operators are recalled in Appendix~\ref{section:
semi-classical calculus}. We shall refer to this calculus as to the
semi-classical calculus (with a large parameter).  In particular we
introduce the following Sobolev norms:
\begin{equation}\label{sobolev}
   \norm{u}_{\H^s} := \norm{\Lambda^s
   u}_{L^2(\R^{n-1})}, \quad \text{with}\ \Lambda^s := \op{\lambda^s}.
\end{equation}
For $s
\geq 0$ note that we have
  $\norm{u}_{\H^s} \sim \tau^s \norm{u}_{L^2(\R^{n-1})}
  + \norm{\valjp{D'}^s u}_{L^2(\R^{n-1})}$.

The operator $M_\pm$ is of pseudo-differential nature in the standard
calculus.  Observe however that in any region where $\tau \gtrsim
|\xi'|$ the symbol $m_{\pm}$ does not satisfy the estimates of
$\ST^1$.  We shall circumvent this technical point by introducing a 
 cut-off procedure.

Let $C_0, C_1>0$ be such that $\varphi' \geq C_0$ and 
\begin{align} 
  \label{eq: bornitude M}
  (M_\pm u, H^+u) \leq C_1 \norm{H^+ u}_{L^2(\R;H^\hf(\R^{n-1}))}^2.
\end{align}
We choose $\psi \in \moo(\R^+)$ nonnegative such that $\psi = 0$
in $[0, 1]$ and $\psi = 1$ in $[2, + \infty)$.  We
introduce the following Fourier multiplier
\begin{align}
  \label{eq: Fourier multiplier}
  \psi_\epsilon(\tau,\xi') = \psi \Big( \frac{\epsilon  \tau}{\valjp{\xi}}\Big) \in \ST^0, \qquad \text{with} \ 0 < \epsilon \leq \epsilon_0.
\end{align} 
such that $\tau \gtrsim \valjp{\xi'}/\epsilon$ in its support.
We choose $\epsilon_0$ \suff small so that $\supp(\psi_\epsilon)$ is disjoint
from a conic \nhd (for $|\xi'|\geq 1$) of the sets $\{f_\pm=0\}$ (see
Figure~\ref{fig: prelim cut-off}).
\begin{figure}
\begin{center}
   \begin{picture}(0,0)%
\includegraphics{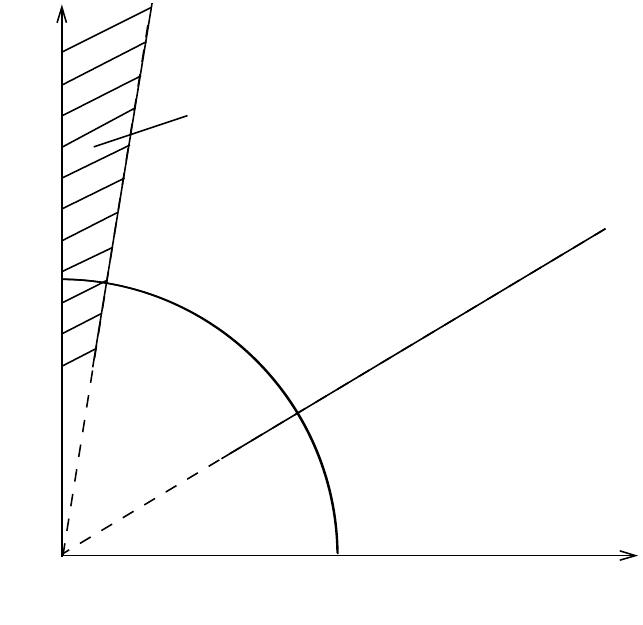}%
\end{picture}%
\setlength{\unitlength}{3947sp}%
\begingroup\makeatletter\ifx\SetFigFont\undefined%
\gdef\SetFigFont#1#2#3#4#5{%
  \reset@font\fontsize{#1}{#2pt}%
  \fontfamily{#3}\fontseries{#4}\fontshape{#5}%
  \selectfont}%
\fi\endgroup%
\begin{picture}(3076,2997)(1051,-2956)
\put(2637,-2836){\makebox(0,0)[lb]{\smash{{\SetFigFont{10}{12.0}{\rmdefault}{\mddefault}{\updefault}{\color[rgb]{0,0,0}$1$}%
}}}}
\put(1051,-211){\makebox(0,0)[lb]{\smash{{\SetFigFont{10}{12.0}{\rmdefault}{\mddefault}{\updefault}{\color[rgb]{0,0,0}$\tau$}%
}}}}
\put(3826,-2911){\makebox(0,0)[lb]{\smash{{\SetFigFont{10}{12.0}{\rmdefault}{\mddefault}{\updefault}{\color[rgb]{0,0,0}$|\xi'|$}%
}}}}
\put(3151,-1786){\makebox(0,0)[lb]{\smash{{\SetFigFont{10}{12.0}{\rmdefault}{\mddefault}{\updefault}{\color[rgb]{0,0,0}$f_\pm =0$}%
}}}}
\put(2026,-511){\makebox(0,0)[lb]{\smash{{\SetFigFont{10}{12.0}{\rmdefault}{\mddefault}{\updefault}{\color[rgb]{0,0,0}$\supp(\psi_\epsilon)$}%
}}}}
\end{picture}%
\end{center}
\caption{Relative positions of $\supp(\psi_\epsilon)$ and the sets $\{ f_\pm =0\}$.}
\label{fig: prelim cut-off}
\end{figure}

The following lemma states that we can obtain very natural estimates
on both sides of the interface in the region $|\xi'| \ll \tau$, \ie for $\epsilon$ small. We
refer to Appendix~\ref{proof: estimates for large tau} for a proof.
%%%%%%%%%%%%%%%%%%%%%%%%
% lemma                %
%%%%%%%%%%%%%%%%%%%%%%%%
\begin{lem}
  \label{lemma: estimates for large tau}
   Let $\ell \in \R$. There
  exist $\tau_1\geq 1$, $0< \epsilon_1 \leq \epsilon_0$ and $C>0$ such
  that \begin{align*}
  &C \norm{H_+ \A_+ \op{\psi_\epsilon} \omega}_{L^2(\R;\H^\ell)} 
  \geq \val{\op{\psi_\epsilon} \omega\br}_{\H^{\ell+\hf}}
  + \norm{H_+ \op{\psi_\epsilon} \omega}_{L^2(\R;\H^{\ell+1})} ,\\
  &C\Big(\norm{H_- \A_- \op{\psi_\epsilon} \omega}_{L^2(\R;\H^\ell)}
  + \val{\op{\psi_\epsilon} \omega\bl}_{\H^{\ell+\hf}}\Big)
   \geq \norm{H_- \op{\psi_\epsilon} \omega}_{L^2(\R;\H^{\ell+1})}
  , \end{align*} for $0<\epsilon \leq \epsilon_1$, with $A_+=\P_{E+}$
  or $\P_{F+}$, $\A_- = \P_{E-}$ or $\P_{F-}$, for $\tau \geq \tau_1$
  and $\omega \in \mooc(\R^n)$.
\end{lem}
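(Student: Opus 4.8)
The plan is to exploit the fact that on $\supp(\psi_\epsilon)$ we have $\tau\gtrsim\valjp{\xi'}/\epsilon$, so that for $\epsilon$ small the symbol $m_\pm$ is a lower-order perturbation of $\tau\varphi'_\pm$ and all four first-order factors $\P_{E\pm},\P_{F\pm}$ become ``elliptic positive'' in the precise sense of estimate~\eqref{eq: nice estimate e positive}. First I would reduce to a single model computation: setting $\omega_\epsilon=\op{\psi_\epsilon}\omega$ and writing $\A_\pm=D_n+S_\pm+ig_\pm$ with $g_\pm$ the real symbol of $\tau\varphi'_\pm\pm M_\pm$, I expand
\begin{align*}
  \norm{H_+\A_+\omega_\epsilon}_{L^2(\R;\H^\ell)}^2
  &=\norm{H_+(D_n+S_+)\Lambda^\ell\omega_\epsilon}^2
   +\norm{H_+ g_+\Lambda^\ell\omega_\epsilon}^2\\
  &\quad+2\re\poscal{H_+(D_n+S_+)\Lambda^\ell\omega_\epsilon}{iH_+ g_+\Lambda^\ell\omega_\epsilon},
\end{align*}
up to commutator errors in $\Psi^1$ (harmless by Remark~\ref{remark: symbol modulo psi1}). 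Since $S_+=S_+^\ast$, the cross term is, after an integration by parts in $x_n$ on $\R_+$, equal to $\poscal{\p_n(g_+)\Lambda^\ell\omega_\epsilon}{\Lambda^\ell\omega_\epsilon}_{L^2(\R_+^n)}+g_+\val{\Lambda^\ell\omega_\epsilon\br}^2$ modulo lower-order terms; here $\p_n g_+ = \tau\varphi''_+ = \tau\beta \geq 0$ is the crucial positive contribution and $g_+\br=\tau\alpha_+ + m_+ \gtrsim \lambda$ on the support. This yields the boundary term $\val{\omega_\epsilon\br}_{\H^{\ell+\hf}}$ and, combined with $\norm{g_+\Lambda^\ell\omega_\epsilon}\gtrsim\norm{\omega_\epsilon}_{L^2(\R;\H^{\ell+1})}$, the interior term, proving the first inequality.

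For the second inequality the boundary term appears with the ``wrong'' sign after integration by parts on $\R_-$ (the analogue of the $-\lambda\val{\omega(0)}^2$ term in the discussion following~\eqref{eq: nice estimate e positive}), which is exactly why $\val{\omega_\epsilon\bl}_{\H^{\ell+\hf}}$ is added on the left-hand side; otherwise the argument is identical, using $\p_n g_-=\tau\beta\geq 0$ and $|g_-\bl|=\tau\alpha_--m_-$, which on $\supp(\psi_\epsilon)$ is $\gtrsim\lambda$ once $\epsilon_1$ is small enough (this is where the disjointness of $\supp(\psi_\epsilon)$ from a conic neighborhood of $\{f_-=0\}$ enters). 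The four cases $\A_+\in\{\P_{E+},\P_{F+}\}$ and $\A_-\in\{\P_{E-},\P_{F-}\}$ are treated uniformly since on the cut-off region all four symbols have the same sign as $\tau\varphi'_\pm$ and the same $\p_n$-derivative $\tau\beta$.

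The main obstacle is the bookkeeping of the remainder terms: $M_\pm$ is a standard (not semi-classical) operator, so one must justify that $\op{\psi_\epsilon}M_\pm$ and the commutators $[M_\pm,\Lambda^\ell]$, $[S_\pm,\Lambda^\ell]$ lie in the semi-classical class $\op{\ST^{\,\cdot}}$ with the expected gain, using that $\tau\gtrsim\valjp{\xi'}$ on the support of $\psi_\epsilon$ so that $\valjp{\xi'}\sim\valjp{D'}$-type operators are controlled by powers of $\lambda$ there. Once these symbolic estimates are in place (they are collected in Appendices~\ref{app.sub.pseudo} and~\ref{section: semi-classical calculus}), all error terms are absorbed into $C\norm{H_\pm\A_\pm\omega_\epsilon}_{L^2(\R;\H^\ell)}$ by choosing $\tau_1$ large and $\epsilon_1$ small, which completes the proof.
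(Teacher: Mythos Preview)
Your approach is correct but takes a different route from the paper. You expand the full square $\norm{H_\pm\A_\pm\omega_\epsilon}_{L^2(\R;\H^\ell)}^2$, mimicking the model computation~\eqref{eq: nice estimate e positive}; this forces you to handle $M_\pm$ within the semi-classical calculus (your ``main obstacle'') and to track several commutators. The paper instead uses a single scalar multiplier: with $s=2\ell+1$ and $\omega_1=\op{\psi_\epsilon}\omega$, it computes $2\re\poscal{\A_+\omega_1}{iH_+\tau^s\omega_1}$, gets the boundary term from $[D_n,H_+]$, and for the volume term simply bounds $(M_+\omega_1,H_+\omega_1)$ by the crude standard-calculus estimate~\eqref{eq: bornitude M}. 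Because $\psi_\epsilon$ is a pure Fourier multiplier, Plancherel then reduces everything to the pointwise inequality $C_0\tau-C_1\valjp{\xi'}\gtrsim\lambda$ and $\tau^s\sim\lambda^s$ on $\supp(\psi_\epsilon)$ for $\epsilon$ small; Young's inequality closes the estimate. This is shorter and sidesteps the semi-classical bookkeeping for $M_\pm$ altogether, whereas your argument has the merit of being uniform with the proofs of Lemmas~\ref{lemma: estimate PE+ PE-}--\ref{lemma: increasing root} and would survive an $x$-dependent cutoff.

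Two small corrections to your sketch: $\p_n g_\pm=\tau\beta+\p_{x_n}m_\pm$, not just $\tau\beta$ (the extra term is $O(\valjp{\xi'})$, hence $O(\epsilon\tau)$ on $\supp(\psi_\epsilon)$ and harmless, but it is not zero since the coefficients depend on $x_n$); and the appeal to Remark~\ref{remark: symbol modulo psi1} is misplaced---that remark concerns $\Psi^1$-perturbations of the second-order operator $\P_\pm$, not the commutator errors you generate here, which must instead be absorbed by taking $\tau$ large against the $\H^{\ell+1}$-norm you produce on the right.
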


\subsection{Positive imaginary part on a half-line}
We have the following estimates for the operators $\P_{E+}$ and
$\P_{E-}$.
%%%%%%%%%%%%%%%%%%%%%%%%
% lemma                %
%%%%%%%%%%%%%%%%%%%%%%%%
\begin{lem}
  \label{lemma: estimate PE+ PE-}
  Let $\ell \in \R$. There exist $\tau_1\geq 1$ and $C>0$ such that 
  \begin{align}
    \label{eq: estimate PE+}
    C\norm{H_+ \P_{E_+} \omega}_{L^2(\R;\H^\ell)} 
    \geq \val{\omega_{|x_n=0^+}}_{\H^{\ell+\hf}} 
    + \norm{H_+ \omega}_{L^2(\R;\H^{\ell+1})}
    + \norm{H_+D_n \omega}_{L^2(\R;\H^{\ell})},
  \end{align}
  and 
  \begin{multline}
    \label{eq: estimate PE-}
    C \Big(\norm{H_- \P_{E_-} \omega}_{L^2(\R;\H^\ell)}  
    + \val{\omega_{|x_n=0^-}}_{\H^{\ell+\hf}}\Big) \\
    \geq  \norm{H_- \omega}_{L^2(\R;\H^{\ell+1})}
    + \norm{H_+D_n \omega}_{L^2(\R;\H^{\ell})},
  \end{multline}
  for $\tau \geq \tau_1$ and $\omega \in\mooc(\R^n)$.
\end{lem}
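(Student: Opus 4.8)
The plan is to prove \eqref{eq: estimate PE+} directly by an integration-by-parts argument on the half-line $\R_+=\{x_n>0\}$, reducing the $n$-dimensional estimate to a family of one-dimensional estimates parametrized by the tangential frequency $\xi'$, exactly as in the toy computation \eqref{eq: nice estimate e positive} of the introduction. First I would write $\P_{E+}\omega = (D_n + S_+ + iE_+)\omega$ with $E_+ = \tau\varphi'_+ + M_+$, and note that by the ellipticity of $M_+$ \eqref{3.ell012} together with $\varphi'_+ \geq C_0>0$ the operator $E_+$ has symbol $e_+ \gtrsim \lambda$. Applying $\Lambda^\ell$ (which commutes with $\P_{E+}$ modulo $\op{\ST^{\ell}}$-times-harmless terms, absorbable for $\tau$ large) reduces matters to $\ell=0$. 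Then I would compute $\norm{H_+\P_{E+}\omega}^2 = \norm{H_+(D_n+S_+)\omega}^2 + \norm{H_+ E_+\omega}^2 + 2\re\poscal{H_+(D_n+S_+)\omega}{iH_+E_+\omega}$, and integrate by parts in $x_n$ in the cross term: the $D_n$ part produces the crucial boundary term $\poscal{E_+\omega\br}{\omega\br}_{L^2(\R^{n-1})}$ plus a commutator term $\poscal{[\partial_n, E_+]\omega}{\omega}$, while the $S_+$ part is self-adjoint in $x'$ and contributes a bounded tangential term. Since $\partial_{x_n}E_+ = \tau\varphi''_+ + \partial_{x_n}m_+^w = \tau\beta + \op{\ST^1}$ is (microlocally, up to the cut-off region handled by Lemma \ref{lemma: estimates for large tau}) bounded below by $c\lambda$ modulo lower order, the commutator term is favorable; hence one gets $\norm{H_+\P_{E+}\omega}^2 \gtrsim \val{\omega\br}_{\H^{\hf}}^2 + \norm{H_+\omega}_{L^2(\R;\H^1)}^2$.

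For the $D_n$ bound $\norm{H_+D_n\omega}_{L^2(\R;\H^\ell)}$, I would use the triangle inequality $\norm{H_+D_n\omega} \leq \norm{H_+\P_{E+}\omega} + \norm{H_+(S_++iE_+)\omega}$ and control the last term by the already-obtained $L^2(\R;\H^1)$ bound on $\omega$, since $S_+, E_+ \in \op{\ST^1}$. Here one must be a little careful because $M_+$ is a standard ($\S^1$) operator, not a semiclassical ($\ST^1$) one, in the region $\tau \gtrsim |\xi'|$; this is precisely why the cut-off $\psi_\epsilon$ was introduced. So the actual argument splits $\omega = \op{\psi_\epsilon}\omega + \op{1-\psi_\epsilon}\omega$: on $\supp\psi_\epsilon$ (the region $|\xi'|\ll\tau$) Lemma \ref{lemma: estimates for large tau} already gives the conclusion; on $\supp(1-\psi_\epsilon)$ one has $\tau \lesssim \valjp{\xi'}$, so $\lambda \sim \valjp{\xi'}$ and $m_+$ genuinely satisfies $\ST^1$ estimates there, making the semiclassical calculus applicable to the integration-by-parts computation above. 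The two pieces are patched using that commuting $\op{\psi_\epsilon}$ through $\P_{E+}$ costs only $\op{\ST^0}$, hence a term absorbable for $\tau\geq\tau_1$.

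The estimate \eqref{eq: estimate PE-} for $\P_{E-}$ on $\R_- = \{x_n<0\}$ is proved identically, but now the integration by parts in $x_n$ over $(-\infty,0)$ produces the boundary term with the opposite sign, namely $-\poscal{E_-\omega\bl}{\omega\bl}$; this is why $\val{\omega\bl}_{\H^{\ell+\hf}}$ must appear on the left-hand side (it is added, not controlled), while the interior terms $\norm{H_-\omega}_{L^2(\R;\H^{\ell+1})}$ and the $D_n$ term still come out with the right sign because $\partial_{x_n}E_-=\tau\beta+\cdots$ is again positive. The main obstacle, and the only genuinely delicate point, is the bookkeeping of the cut-off: ensuring that the standard-calculus operator $M_\pm$ can be treated within the semiclassical calculus on $\supp(1-\psi_\epsilon)$ and that all the commutator/remainder terms generated by $\Lambda^\ell$, by $\op{\psi_\epsilon}$, and by the non-constant coefficients in $S_\pm, M_\pm$ are of order strictly lower than the main terms and hence absorbable by taking $\tau_1$ large; the positivity of $\partial_{x_n}E_\pm$ (from $\beta>0$) is what makes every such absorption possible, and it should be invoked explicitly.
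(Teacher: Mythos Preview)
Your plan is close to the paper's and the overall structure---split via $\psi_\epsilon$, invoke Lemma~\ref{lemma: estimates for large tau} on the low-frequency piece, run an integration-by-parts argument on $u=\op{1-\psi_\epsilon}\omega$, and patch---is correct. The paper differs in that it does \emph{not} expand $\|H_+\P_{E_+}u\|^2$; instead it computes only the pairing $2\re\poscal{\P_{E_+}u}{iH_+\Lambda^{2\ell+1}u}$, which directly produces the boundary term $|u\br|^2_{\H^{\ell+1/2}}$, a harmless commutator $[S_+,\Lambda^{2\ell+1}]$, and the interior term $2\re\poscal{E_+u}{H_+\Lambda^{2\ell+1}u}$. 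After replacing $m_+$ by the truncation $\tilde m_+=m_+(1-\tilde\psi_\epsilon)\in\ST^1$ (this is how the paper makes precise your remark that $m_+$ obeys $\ST^1$ estimates on $\supp(1-\psi_\epsilon)$), G\aa rding on $\tilde e_+\ge C\lambda$ gives $\gtrsim\|H_+u\|^2_{L^2(\R;\H^{\ell+1})}$. No term involving $\partial_{x_n}e_+$ ever appears.

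Your assertion that $\partial_{x_n}E_+=\tau\beta+\partial_{x_n}m_+$ is ``bounded below by $c\lambda$'' is wrong, and this is the one genuine error in the plan: $\partial_{x_n}m_+\in\S^1$ carries no sign, and where $|\xi'|\gg\tau$ it dominates $\tau\beta$ and can make the sum negative of size $\lambda$. In your expanded-square route the commutator $\poscal{[\partial_n,E_+]u}{H_+u}$ is therefore \emph{not} favorable; it is merely a lower-order error, $O(\|H_+u\|^2_{L^2(\R;\H^{1/2})})\le C\tau^{-1}\|H_+u\|^2_{L^2(\R;\H^1)}$, to be absorbed by the main term $\|H_+E_+u\|^2\gtrsim\|H_+u\|^2_{L^2(\R;\H^1)}$ for $\tau$ large. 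With that correction your argument does go through. Correspondingly, your closing claim that ``the positivity of $\partial_{x_n}E_\pm$ (from $\beta>0$) is what makes every absorption possible'' is misplaced: as the paper notes explicitly, this lemma uses only $\varphi'>0$ (giving $e_\pm\gtrsim\lambda$); the convexity parameter $\beta$ enters only through the sub-ellipticity condition needed for $\P_{F\pm}$ in Lemma~\ref{lemma: increasing root}, not here.
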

Note that the first estimate, in $\R_+$, is of very good quality as
both the trace and the volume norms are dominated: we have a perfect
elliptic estimate. In $\R_-$, we obtain an estimate of lesser quality.
Observe also that no assumption on the weight function,
apart from the positivity of $\varphi'$, is used in the proof below.
%%%% proof of lemma
\begin{proof}
Let $\psi_\epsilon$ be defined as in Section~\ref{sec: prelim
cut-off}.  We let $\tilde{\psi} \in \moo(\R^+)$ be nonnegative and
such that $\tilde{\psi} = 1$ in $[4, + \infty)$ and $\tilde{\psi} = 0$
in $[0,3]$. We then define $\tilde{\psi}_\epsilon$ according
to \eqref{eq: Fourier multiplier} and we have $\tau \lesssim \valjp{\xi'}$
in $\supp(1 - \tilde{\psi}_\epsilon)$ and
$\supp(1- \psi_\epsilon) \cap
\supp(\tilde{\psi}_\epsilon) = \emptyset$. We set $\tilde{m}_\pm = m_\pm (1- \tilde{\psi}_\epsilon)$ and observe that $\tilde{m}_\pm \in
\ST^1$. We define
\begin{align*}
  \tilde{e}_\pm = \tau \varphi' + \tilde{m}_\pm \in \ST^1, \quad 
  \tilde{E}_\pm = \opw{\tilde{e}_\pm}, \quad 
\end{align*}       
Observe that from the definition of $\tilde{\psi}_\epsilon$ we have
\begin{align}
  \label{eq: positivity e bar}
  \tilde{e}_\pm \geq C \lambda.
\end{align}
Next, we note that 
\begin{align*}
  M_\pm \op{1-\psi_\epsilon} \omega 
  &= \opw{\tilde{m}_\pm} \op{1-\psi_\epsilon} \omega  
  + \opw{m_\pm \tilde{\psi}_\epsilon}  \op{1-\psi_\epsilon} \omega, 
\end{align*}
and, since $m_\pm \tilde{\psi}_\epsilon \in \S^1$ and $1-\psi_\epsilon \in \ST^0$,
with the latter vanishing in a region $\valjp{\xi'} \leq C \tau$,
Lemma~\ref{lemma: mixed composition} yields
\begin{align}
   \label{eq: microlocal change for M}
  M_\pm \op{1-\psi_\epsilon} \omega 
  &= \opw{\tilde{m}_\pm} \op{1-\psi_\epsilon} \omega  
  + R_1 \omega, \quad \text{with} \ \ R_1 \in \op{\ST^{-\infty}}.  
\end{align}
%Similarly we find
%\begin{align}
%   \label{eq: microlocal change for M - 2}
%  \op{1-\psi_\epsilon} M_\pm  \omega 
%  &= \op{1-\psi_\epsilon} \opw{\tilde{m}_\pm}  \omega  
%  + R_2 \omega, \quad \text{with} \ \ R_2 \in \op{\ST^{-\infty}}.  
%\end{align}

  We set
  $u = \op{1-\psi_\epsilon} \omega$.  For $s=2 \ell+1$, we compute, 
  \begin{align}
    \label{eq: start lemma 3.1}
    2 \re \poscal{\P_{E_+} u}{i  H_+ \Lambda^s u} 
    &= \poscal{i [D_n, H_+] u}{\Lambda^s u} 
    + \poscal{i [S_+, \Lambda^s] u}{H_+ u}
    + 2 \re \poscal{E_+ u}{H_+\Lambda^s u}\\
    &\geq  \val{u\br}_{\H^{\ell+\hf}}^2 
    + 2 \re \poscal{E_+ u}{H_+ \Lambda^s u}
    - C \norm{H_+u}_{L^2(\R; \H^{\ell+\hf})}^2.
    \nonumber
  \end{align}
  By~\eqref{eq: microlocal change for M} we have $E_+ u = \tilde{E}_+ u +
  R_1 \omega$.
  This yields 
  \begin{align*}
    \re \poscal{E_+ u}{H_+\Lambda^s u} 
    + \norm{H_+ \omega}^2
    \gtrsim \re \poscal{\tilde{E}_+ u}{H_+\Lambda^s u}
    \gtrsim \norm{H_+u}_{L^2(\R; \H^{\ell+1})}^2,
  \end{align*}
 
 for $\tau$ \suff large by \eqref{eq: positivity e bar} and
  Lemma~\ref{lem: postivity garding tau}. 
   We thus  obtain 
  \begin{multline*}
    \re \poscal{\P_{E_+} u}{i  H_+ \Lambda^s u} 
    + \norm{H_+u}_{L^2(\R; \H^{\ell+\hf})}^2
    + \norm{H_+ \omega}^2\\
    \gtrsim 
     \val{u\br}_{\H^{\ell+\hf}}^2 
    + \norm{H_+u}_{L^2(\R; \H^{\ell+1})}^2,
  \end{multline*}
 With the Young
  inequality and taking $\tau$ \suff large we then find
   \begin{align*}
     \norm{H_+ \P_{E_+} u}_{L^2(\R;\H^\ell)}  
     +\norm{H_+ \omega}
     \gtrsim \val{u\br}_{\H^{\ell+\hf}} 
     + \norm{H_+u}_{L^2(\R; \H^{\ell+1})}.
   \end{align*}
  We now invoke the corresponding estimate provided by
  Lemma~\ref{lemma: estimates for large tau},
  \begin{align*}
    \norm{H_+ \P_{E+} \op{\psi_\epsilon} \omega}_{L^2(\R;\H^\ell)}
    \gtrsim \val{\op{\psi_\epsilon} \omega\br}_{\H^{\ell+\hf}}  
    + \norm{H_+ \op{\psi_\epsilon} \omega}_{L^2(\R; \H^{\ell+1})}.
  \end{align*}
  Adding the two estimates, with the triangular inequality, we obtain
  \begin{multline*}
    \norm{H_+ {\P}_{E_+}  \op{1-\psi_\epsilon} \omega}_{L^2(\R;\H^\ell)}
    + \norm{H_+ \P_{E+} \omega}_{L^2(\R;\H^\ell)}
    + \norm{H_+ \omega}\\
    \gtrsim \val{\omega\br}_{\H^{\ell+\hf}}
    + \norm{H_+ \omega}_{L^2(\R; \H^{\ell+1})}.
  \end{multline*}
  Lemma~\ref{lemma: mixed composition} gives $\big[\P_{E_+},
  \op{1-\psi_\epsilon}  \big] \in \op{\ST^0}$.
  We thus have 
  \begin{align*}
    \norm{H_+ \P_{E_+} \op{1-\psi_\epsilon} \omega}_{L^2(\R;\H^\ell)} 
    &\lesssim \norm{H_+ \op{1-\psi_\epsilon} \P_{E_+}  \omega}_{L^2(\R;\H^\ell)} 
    +  \norm{H_+  \omega}_{L^2(\R;\H^\ell)}\\
    &\lesssim \norm{H_+  \P_{E_+}  \omega}_{L^2(\R;\H^\ell)} 
    +  \norm{H_+  \omega}_{L^2(\R;\H^\ell)}.
  \end{align*}
  By taking $\tau$ \suff large, we thus obtain
  \begin{align}
    \norm{H_+ \P_{E_+} \omega}_{L^2(\R;\H^\ell)} 
    \gtrsim \val{\omega_{|x_n=0^+}}_{\H^{\ell+\hf}} 
    + \norm{H_+ \omega}_{L^2(\R;\H^{\ell+1})}.
  \end{align}
  The term $\norm{H_+D_n \omega}_{L^2(\R;\H^{\ell})}$ can simply be introduced
  on the \rhs of this estimates, to yield \eqref{eq: estimate PE+},
  thanks to the form of the first-order operator $\P_{E_+}$.  To
  obtain estimate~\eqref{eq: estimate PE-} we compute $2 \re
  \poscal{\P_{E_-} \omega}{i H_-\omega}$. The argument is similar
  whereas the trace term comes out with the opposite sign.
\end{proof}

For the operator $\P_{F+}$ we can also obtain a microlocal estimate.
We place ourselves in a microlocal region where $f_+ = \tau \varphi^+
- m_+$ is positive. More precisely, let $\chi(x,\tau,\xi') \in
\ST^0$ be such that $|\xi'| \leq C
\tau$ and $f_+ \geq C_1 \lambda$ in $\supp(\chi)$, $C_1>0$, and
$|\xi'| \geq C' \tau$ in $\supp(1- \chi)$.
%%%%%%%%%%%%%%%%%%%%%%%%
% lemma                %
%%%%%%%%%%%%%%%%%%%%%%%
\begin{lem}
  \label{lemma: estimate PF+ F+ positive}
  Let $\ell \in \R$. There exist $\tau_1\geq 1$ and $C>0$  such that
  \begin{multline*}
    C \Big(\norm{H_+ \P_{F_+} \opw{\chi} \omega}_{L^2(\R;\H^\ell)} 
    + \norm{H_+ \omega}\Big)\\
    \geq  \val{\opw{\chi} \omega_{|x_n=0^+}}_{\H^{\ell+\hf}} 
    + \norm{H_+ \opw{\chi} \omega}_{L^2(\R;\H^{\ell+1})}
    + \norm{H_+ D_n \opw{\chi} \omega}_{L^2(\R;\H^{\ell})},
  \end{multline*}
  for $\tau \geq \tau_1$ and $\omega \in\mooc(\R^n)$.
\end{lem}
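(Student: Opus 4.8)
\textbf{Proof plan for Lemma~\ref{lemma: estimate PF+ F+ positive}.}
The strategy mirrors the proof of Lemma~\ref{lemma: estimate PE+ PE-}, exploiting that on $\supp(\chi)$ the symbol $f_+ = \tau\varphi' - m_+$ is elliptic positive, $f_+ \geq C_1 \lambda$, so that $\P_{F_+}$ behaves on this microlocal region exactly like $\P_{E_+}$ did globally. First I would carry out the integration-by-parts identity: setting $u = \opw{\chi}\omega$ and $s = 2\ell+1$, compute
$$
2\re\poscal{\P_{F_+} u}{i H_+ \Lambda^s u}
= \poscal{i[D_n,H_+]u}{\Lambda^s u}
+ \poscal{i[S_+,\Lambda^s]u}{H_+ u}
+ 2\re\poscal{F_+ u}{H_+\Lambda^s u}.
$$
The commutator $[D_n,H_+]$ produces the half-line trace $\val{u\br}_{\H^{\ell+\hf}}^2$ with the favorable sign, and $[S_+,\Lambda^s]\in\op{\ST^s}$ (since $S_\pm$ is tangential of order $1$) contributes only a term controlled by $\norm{H_+u}_{L^2(\R;\H^{\ell+\hf})}^2$.

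The heart of the matter is the term $\re\poscal{F_+ u}{H_+\Lambda^s u}$. Here I would first address the technical mismatch that $m_+$, hence $F_+$, is not a bona fide $\ST^1$ symbol in the region $\tau\gtrsim|\xi'|$: on $\supp(1-\chi)$ we have $|\xi'|\geq C'\tau$, so I introduce a cutoff $\tilde\psi_\epsilon$ as in the proof of Lemma~\ref{lemma: estimate PE+ PE-}, write $\tilde m_+ = m_+(1-\tilde\psi_\epsilon)\in\ST^1$ and $\tilde f_+ = \tau\varphi' - \tilde m_+$, and use Lemma~\ref{lemma: mixed composition} to replace $M_+\opw{\chi}\omega$ by $\opw{\tilde m_+}\opw{\chi}\omega$ modulo $\op{\ST^{-\infty}}$, legitimate because $\opw{\chi}$ vanishes where $|\xi'|\geq C'\tau$. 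On $\supp(\chi)$ one has $\tilde f_+ = f_+ \geq C_1\lambda$, so the sharp G\aa rding inequality (Lemma~\ref{lem: postivity garding tau}) applied to $\tilde F_+ = \opw{\tilde f_+}$ gives
$$
\re\poscal{\tilde F_+ u}{H_+\Lambda^s u} \gtrsim \norm{H_+ u}_{L^2(\R;\H^{\ell+1})}^2
- C\norm{H_+ u}^2,
$$
and the lower-order remainders $\norm{H_+\omega}^2$ absorb the error from the $\op{\ST^{-\infty}}$ term and from the commutator $[\opw{\chi},\P_{F_+}]\in\op{\ST^0}$. Combining with the trace term and using a Young inequality plus taking $\tau$ large yields
$$
\norm{H_+\P_{F_+}\opw{\chi}\omega}_{L^2(\R;\H^\ell)} + \norm{H_+\omega}
\gtrsim \val{\opw{\chi}\omega\br}_{\H^{\ell+\hf}} + \norm{H_+\opw{\chi}\omega}_{L^2(\R;\H^{\ell+1})},
$$
and finally the $D_n$-term is inserted on the right from the identity $D_n\opw{\chi}\omega = \P_{F_+}\opw{\chi}\omega - (S_+ + iF_+)\opw{\chi}\omega$, the last two summands being controlled by the volume and trace norms already bounded.

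\textbf{Main obstacle.} The only genuinely delicate point is keeping the microlocalization consistent: every application of the sharp G\aa rding inequality and of the mixed standard/semi-classical composition calculus (Lemma~\ref{lemma: mixed composition}) must be confined to $\supp(\chi)$, where simultaneously $f_+\gtrsim\lambda$ and $|\xi'|\lesssim\tau$ hold, so that $\tilde f_+$ really agrees with $f_+$ there and so that the compositions with $\opw{\chi}$ produce only smoothing errors; outside $\supp(\chi)$ the symbol $f_+$ changes sign and the estimate is false, which is precisely why the statement carries the cutoff $\opw{\chi}$. Tracking which error terms are $\op{\ST^0}$ (absorbed by $\norm{H_+\omega}$ after dividing by a power of $\tau$) versus $\op{\ST^{-\infty}}$ is routine once this bookkeeping is set up, exactly as in the proof of Lemma~\ref{lemma: estimate PE+ PE-}.
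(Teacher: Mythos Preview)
Your overall strategy matches the paper's, but there is a concrete error in the microlocal bookkeeping where you replace $M_+$ by $\opw{\tilde m_+}$. You set $u=\opw{\chi}\omega$ and claim $M_+u-\opw{\tilde m_+}u=\opw{m_+\tilde\psi_\epsilon}\opw{\chi}\omega$ is in $\op{\ST^{-\infty}}$, ``legitimate because $\opw{\chi}$ vanishes where $|\xi'|\geq C'\tau$.'' The cutoff goes the wrong way: $\supp(\tilde\psi_\epsilon)\subset\{\valjp{\xi'}\le\epsilon\tau/3\}$, and on that set $\chi\equiv 1$ (the hypothesis is $|\xi'|\ge C'\tau$ on $\supp(1-\chi)$). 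The supports overlap fully, the principal symbol of the composition is $m_+\tilde\psi_\epsilon\neq 0$, and Lemma~\ref{lemma: mixed composition} gives no smoothing. The paper instead sets $u=\op{1-\psi_\epsilon}\opw{\chi}\omega$; then $\supp(1-\psi_\epsilon)\cap\supp(\tilde\psi_\epsilon)=\emptyset$ and the replacement is legitimate exactly as in \eqref{eq: microlocal change for M}. The piece $\op{\psi_\epsilon}\opw{\chi}\omega$ is estimated separately via Lemma~\ref{lemma: estimates for large tau} and recombined at the end.

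A second point you gloss over is the G{\aa}rding step: $\tilde f_+\ge C_1\lambda$ holds only on $\supp(\chi)$, not globally (for $|\xi'|\gg\tau$ one has $\tilde f_+=f_+<0$), so Lemma~\ref{lem: postivity garding tau} cannot be applied to $\tilde f_+$ as written. The paper introduces an enlarged cutoff $\tilde\chi$ with $\tilde\chi=1$ near $\supp(\chi)$ and writes $\tilde f_+=\check f_++r$, where $\check f_+=\tilde f_+\tilde\chi+\lambda(1-\tilde\chi)\ge C\lambda$ globally and $r=(\tilde f_+-\lambda)(1-\tilde\chi)$ has $r\sharp(1-\psi_\epsilon)\sharp\chi\in\ST^{-\infty}$. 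You flag this in your ``Main obstacle'' paragraph, so this is more an omitted detail than a misconception; but note that the analogous step is \emph{not} needed in Lemma~\ref{lemma: estimate PE+ PE-}, since there $\tilde e_+\ge C\lambda$ holds everywhere (see~\eqref{eq: positivity e bar}). So ``exactly as in the proof of Lemma~\ref{lemma: estimate PE+ PE-}'' undersells what is required.
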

As for \eqref{eq: estimate PE+} of Lemma~\ref{lemma: estimate PE+
  PE-}, up to an harmless remainder term, we obtain an elliptic
estimate in this microlocal region.
%% proof of lemma
\begin{proof}
  Let $\psi_\epsilon$ be as defined in Section~\ref{sec: prelim
  cut-off} and let $\tilde{\psi}_\epsilon$ be as in the proof of
  Lemma~\ref{lemma: estimate PE+ PE-}.
  We set 
  \begin{align}
  \label{eq: def tilde f}
    \tilde{f}_\pm = \tau \varphi' - \tilde{m}_\pm \in \ST^1, \quad 
    \tilde{F}_\pm = \opw{\tilde{f}_\pm}.
  \end{align}
  Observe that we have
  \begin{align*}
    \tilde{f}_\pm = \tau \varphi' - \tilde{m}_\pm
    =  \tau \varphi' - m_\pm (1-\tilde{\psi}_\epsilon)
    = f_\pm + \tilde{\psi}_\epsilon m_\pm \geq f_\pm.
  \end{align*}
  This gives  $\tilde{f}_+ \geq C \lambda$ in $\supp(\chi)$.

  We set $u = \op{1-\psi_\epsilon} \opw{\chi} \omega$.  Following the
  proof of Lemma~\ref{lemma: estimate PE+ PE-}, for $s=2 \ell+1$, we
  obtain 
  \begin{multline*} 
  \re \poscal{\P_{F+} u}{i H_+\Lambda^s u}
  + \norm{H_+ \omega}^2 +  \norm{H_+u}_{L^2(\R; \H^{\ell+\hf})}^2\\
  \gtrsim \val{u\br}_{\H^{\ell+\hf}}^2
  + \re \poscal{\tilde{F}_+ u}{H_+\Lambda^s u} 
  \end{multline*} 
  Let now
  $\tilde{\chi} \in \ST^0$ satisfy the same properties as $\chi$,
  with moreover $\tilde{\chi}=1$ on a \nhd of $\supp(\chi)$. We then
  write
  $$
  \tilde{f}_+ = \check{f}_+ + r, \quad 
  \text{with}\ \ \check{f}_+= \tilde{f}_+\tilde{\chi} 
  +\lambda(1-\tilde{\chi}) \in \ST^1, 
  \quad r = (\tilde{f}_+- \lambda)(1-\tilde{\chi}) \in \ST^1.
  $$
  As $\supp(1-\tilde{\chi}) \cap \supp(\chi) =\emptyset$, we find
  $r \sharp (1- \psi_\epsilon) \sharp \chi \in \ST^{-\infty}$. Since
  $\check{f}_+ \geq C \lambda$ by construction, with
  Lemma~\ref{lem: postivity garding tau} we
  obtain 
  \begin{multline*} 
  \re \poscal{\P_{F+} u}{i H_+\Lambda^s u}
  + \norm{H_+ \omega}^2 
  +  \norm{H_+u}_{L^2(\R; \H^{\ell+\hf})}^2\\
  \gtrsim \val{u\br}_{\H^{\ell+\hf}}^2
  + \norm{H_+ u}_{L^2(\R;\H^{\ell+1})}^2.  
  \end{multline*} 
  With the Young inequality, 
  taking $\tau$ \suff large, we obtain 
  \begin{align*} \norm{H_+ \P_{F+}u}_{L^2(\R;\H^{\ell})}
  + \norm{H_+ \omega} 
  \gtrsim \val{u\br}_{\H^{\ell+\hf}}
  + \norm{H_+ u}_{L^2(\R;\H^{\ell+1})}.  
  \end{align*} 
  
  Invoking the corresponding estimate provided by Lemma~\ref{lemma:
  estimates for large tau} for
  $\opw{\chi} \omega$,
  \begin{align*} 
  \norm{H_+ \P_{F+} \op{\psi_\epsilon} \opw{\chi} \omega}_{L^2(\R;\H^{\ell})}
  &\gtrsim \val{\op{\psi_\epsilon} \opw{\chi} \omega\br}_{\H^{\ell+\hf}}\\
  &\quad+ \norm{H_+ \op{\psi_\epsilon} \opw{\chi} \omega}_{L^2(\R;\H^{\ell+1})},
   \end{align*}
  and arguing as in the end of Lemma~\ref{lemma: estimate PE+ PE-} we
  obtain the result.
\end{proof}

For the operator $\P_{F-}$ we can also obtain a microlocal estimate.
We place ourselves in a microlocal region where $f_- = \tau \varphi^-
- m_-$ is positive. More precisely, let $\chi(x,\tau,\xi') \in
\ST^0$ be such that $|\xi'| \leq C
\tau$ and $f_- \geq C_1 \lambda$ in $\supp(\chi)$, $C_1>0$, and
$|\xi'| \geq C' \tau$ in $\supp(1- \chi)$.  We have the following
lemma whose form is adapted to our needs in Section~\ref{sec:
convexification} where we allow some dependency upon the variable $x'$
for the weight function.
%%%%%%%%%%%%%%%%%%%%%%%%
% lemma                %
%%%%%%%%%%%%%%%%%%%%%%%%
\begin{lem}
  \label{lemma: estimate PF- F- positive}
  Let $\ell \in \R$.
  There exist $\tau_1\geq 1$ and $C>0$  such that
  \begin{multline}
    \label{eq: estimate PF- F- positive}
    C \Big( \norm{H_- \P_{F_-}u}_{L^2(\R;\H^\ell)} + \norm{H_- \omega} 
    +  \norm{H_- D_n \omega} +  \val{u_{|x_n=0^-}}_{\H^{\ell+\hf}} \Big)\\
    \geq
     \norm{H_- u}_{L^2(\R; \H^{\ell+1})},
  \end{multline}
  for $\tau \geq \tau_1$ and 
  $u = a_{n n}^- \P_{E-} \opw{\chi} \omega$ with $\omega \in\mooc(\R^n)$.
\end{lem}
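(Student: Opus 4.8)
The plan is to run the energy estimate used in Lemma~\ref{lemma: estimate PE+ PE-} and Lemma~\ref{lemma: estimate PF+ F+ positive}, adapted to the negative half-line and to the particular shape of $u$. Two features dictate the form of \eqref{eq: estimate PF- F- positive}. First, on $\R_-$ the commutator $[D_n,H_-]=\tfrac1i\p_n H_- = i\,\delta_{\{x_n=0\}}$ produces the boundary term with the \emph{opposite} sign to the $\R_+$ case, so this trace cannot be absorbed and must sit on the left-hand side, exactly as in the second estimate of Lemma~\ref{lemma: estimate PE+ PE-}. Second, since $u = a_{nn}^-\P_{E-}\opw{\chi}\omega$ carries exactly one normal derivative of $\omega$, every tangentially smoothing operator composed with $\P_{E-}$ and applied to $\omega$ is bounded by $\norm{H_- D_n\omega}+\norm{H_-\omega}$; this is the origin of the two harmless terms $\norm{H_-\omega}$ and $\norm{H_- D_n\omega}$ on the left. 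I would start as in Lemma~\ref{lemma: estimate PF+ F+ positive}: recall $\psi_\epsilon$, $\tilde\psi_\epsilon$, set $\tilde f_- = \tau\varphi'-\tilde m_-\in\ST^1$, $\tilde F_- = \opw{\tilde f_-}$, note $\tilde f_- = f_-+\tilde\psi_\epsilon m_-\geq f_-$ so $\tilde f_-\geq C\lambda$ on $\supp(\chi)$, and choose $\tilde\chi\in\ST^0$ with the same support properties as $\chi$ and $\tilde\chi=1$ near $\supp(\chi)$.

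Next I would split $u = \op{1-\psi_\epsilon}u + \op{\psi_\epsilon}u$. For $\underline{u}:=\op{1-\psi_\epsilon}u$ and $s=2\ell+1$, I would expand
\[
2\re\poscal{\P_{F-}\underline{u}}{i H_-\Lambda^s\underline{u}}
= \poscal{i[D_n,H_-]\underline{u}}{\Lambda^s\underline{u}}
+ \poscal{i[S_-,\Lambda^s]\underline{u}}{H_-\underline{u}}
+ 2\re\poscal{F_-\underline{u}}{H_-\Lambda^s\underline{u}},
\]
where the first term is $-\val{\underline{u}\bl}_{\H^{\ell+\hf}}^2$, the second is $\lesssim\norm{H_-\underline{u}}_{L^2(\R;\H^{\ell+\hf})}^2$ since $S_-$ is self-adjoint, and in the third \eqref{eq: microlocal change for M} replaces $M_-$ by $\tilde M_- = \opw{\tilde m_-}$ modulo a remainder in $\op{\ST^{-\infty}}$ applied to $a_{nn}^-\P_{E-}\opw{\chi}\omega$, hence $\lesssim\norm{H_- D_n\omega}+\norm{H_-\omega}$. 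Then, writing $\tilde f_- = \check f_- + r$ with $\check f_- = \tilde f_-\tilde\chi + \lambda(1-\tilde\chi)\geq C\lambda$ and $r = (\tilde f_- - \lambda)(1-\tilde\chi)$, the disjointness $\supp(1-\tilde\chi)\cap\supp(\chi)=\emptyset$ and the finite order of the intermediate operators make $\opw{r}\op{1-\psi_\epsilon}a_{nn}^-\P_{E-}\opw{\chi}$ a tangentially smoothing operator times a single $D_n$, again absorbed by $\norm{H_- D_n\omega}+\norm{H_-\omega}$, so the G\aa rding inequality of Lemma~\ref{lem: postivity garding tau} gives $\re\poscal{\tilde F_-\underline{u}}{H_-\Lambda^s\underline{u}}\gtrsim\norm{H_-\underline{u}}_{L^2(\R;\H^{\ell+1})}^2$. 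Moving the boundary term to the left, absorbing the $\H^{\ell+\hf}$ volume term for $\tau$ large and using the Young inequality, I obtain
\[
\norm{H_-\P_{F-}\underline{u}}_{L^2(\R;\H^\ell)} + \norm{H_-\omega} + \norm{H_- D_n\omega} + \val{\underline{u}\bl}_{\H^{\ell+\hf}}
\gtrsim \norm{H_-\underline{u}}_{L^2(\R;\H^{\ell+1})}.
\]

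For the complementary piece $\op{\psi_\epsilon}u$, microlocalized to $|\xi'|\ll\tau$, the second estimate of Lemma~\ref{lemma: estimates for large tau} (the one on $\R_-$, which already carries a trace on its left-hand side) applies with $\A_- = \P_{F-}$, up to commutators of $\op{\psi_\epsilon}$ with $a_{nn}^-$ and $\P_{E-}$ costing at most $\norm{H_- D_n\omega}+\norm{H_-\omega}$. Adding the two estimates, using $[\P_{F-},\op{1-\psi_\epsilon}]\in\op{\ST^0}$ (Lemma~\ref{lemma: mixed composition}) together with the fact that $\op{\psi_\epsilon}$ and $\op{1-\psi_\epsilon}$ commute with $H_-$ to replace $\P_{F-}\op{1-\psi_\epsilon}u$ by $\op{1-\psi_\epsilon}\P_{F-}u$ modulo $\norm{H_- u}_{L^2(\R;\H^\ell)}$ (absorbed in $\norm{H_- u}_{L^2(\R;\H^{\ell+1})}$ for $\tau$ large), and bounding $\val{(\op{1-\psi_\epsilon}u)\bl}_{\H^{\ell+\hf}}$ and $\val{(\op{\psi_\epsilon}u)\bl}_{\H^{\ell+\hf}}$ by $\val{u\bl}_{\H^{\ell+\hf}}$, yields \eqref{eq: estimate PF- F- positive}. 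I expect the main obstacle to be the bookkeeping of these remainders: one must check that every commutator and symbol substitution, once pushed through the composite $u = a_{nn}^-\P_{E-}\opw{\chi}\omega$, is genuinely dominated by the three ``harmless'' quantities $\norm{H_-\omega}$, $\norm{H_- D_n\omega}$ and $\val{u\bl}_{\H^{\ell+\hf}}$ already on the left, the point being that $\P_{E-}$ contributes only a single $D_n$, so no second-order normal derivative of $\omega$ and no uncontrolled full trace ever survives.
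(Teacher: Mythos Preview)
Your proposal is correct and follows the same route as the paper: the same split $u=\op{1-\psi_\epsilon}u+\op{\psi_\epsilon}u$, the same energy identity $2\re\poscal{\P_{F_-}z}{iH_-\Lambda^s z}$ on $\R_-$ with its sign-reversed trace term, the passage $F_-\to\tilde F_-$ via \eqref{eq: microlocal change for M}, the decomposition $\tilde f_-=\check f_-+r$ with G{\aa}rding on $\check f_-$ and disjoint-support smoothing for $r$ (yielding exactly the $\norm{H_-\omega}+\norm{H_-D_n\omega}$ remainders), and the appeal to Lemma~\ref{lemma: estimates for large tau} for the piece $\op{\psi_\epsilon}u$. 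The only cosmetic difference is that the paper records the remainder from the microlocal change as $\norm{H_-u}$ and absorbs it at the very end into $\norm{H_-u}_{L^2(\R;\H^{\ell+1})}$ for $\tau$ large, whereas you push it immediately down to $\norm{H_-\omega}+\norm{H_-D_n\omega}$; both bookkeepings are valid.
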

%%%% proof of lemma
\begin{proof}
   Let $\psi_\epsilon$ be defined as in Section~\ref{sec: prelim
cut-off}. We define $ \tilde{f}_-$ and $\tilde{F}_-$ as in \eqref{eq:
def tilde f}. We have $\tilde{f}_- \geq f_- \geq C \lambda$ in
$\supp(\chi)$.  We set $z = \op{1-\psi_\epsilon} u$ and for $s =
2\ell+1$ we compute 
  \begin{align*}
    &2 \re \poscal{\P_{F_-} z}
    {i H_- \Lambda^s z}\\ 
    &\qquad = \poscal{i [D_n, H_-] z}{\Lambda^s z}
    + i \poscal{[S_-, \Lambda^s] z}{H_- z}
    + 2 \re \poscal{F_- z}{ H_- \Lambda^s z}\\
    &\qquad\geq 
    - \val{z\bl}_{\H^{\ell+\hf}}^2 
    + 2 \re \poscal{F_- z} {H_- \Lambda^s z}
    - C \norm{H_- z}_{L^2(\R; \H^{\ell+\hf})}^2.
  \end{align*}
  Arguing as in the proof of Lemma~\ref{lemma: estimate PE+ PE-} (see  \eqref{eq: microlocal change for M} and \eqref{eq: start lemma 3.1}) we obtain 
  \begin{multline*}
    2 \re \poscal{\P_{F_-} z}
    {i H_- \Lambda^s z} + C \norm{H_- u}^2 + \val{z\bl}_{\H^{\ell+ \hf}}^2 
    + C \norm{H_- z}_{L^2(\R; \H^{\ell + \hf})}^2\\
    \geq 
    2 \re \poscal{\tilde{F}_- z} {H_- \Lambda^s z}.
  \end{multline*}
  Let now $\tilde{\chi} \in \ST^0$ satisfy the same properties as
  $\chi$, with moreover $\tilde{\chi}=1$ on a \nhd of $\supp(\chi)$. We then
  write
  $$
  \tilde{f}_- = \check{f}_- + r, \quad 
  \text{with}\ \ \check{f}_- = \tilde{f}_- \tilde{\chi} +\lambda(1-\tilde{\chi}) \in \ST^1, 
  \quad r = (\tilde{f}_- - \lambda)(1-\tilde{\chi}) \in \ST^1.
  $$
  As $\check{f}_- \geq C \lambda$ and $\supp(1-\tilde{\chi}) \cap \supp (\chi) = \emptyset$
  with Lemma~\ref{lem: postivity garding tau}
  we obtain, for $\tau$ large,  
  \begin{multline*}
    2 \re \poscal{\P_{F_-} z}
    {i H_- \Lambda^s z} + C \norm{H_- u}^2 + \val{z\bl}_{\H^{\ell+\hf}}^2 
    + C \norm{H_- z}_{L^2(\R; \H^{\ell + \hf})}^2\\
    + \norm{H_- \omega}^2 + \norm{H_- D_n \omega}^2 \geq 
    C' \norm{H_-z}_{L^2(\R; \H^{\ell+1})}^2.
  \end{multline*}
  With the Young
  inequality and taking $\tau$ \suff large we then find
  \begin{multline*}
    \norm{H_- \P_{F_-} z}_{L^2(\R; \H^\ell)}
    + \norm{H_- u} + \val{z\bl}_{\H^{\ell+ \hf}} 
    + \norm{H_- \omega} + \norm{H_- D_n \omega}\\ \gtrsim
    \norm{H_-z}_{L^2(\R; \H^{\ell+1})}.
  \end{multline*}
  Invoking the corresponding estimate provided by Lemma~\ref{lemma:
  estimates for large tau} for $u$ yields
  \begin{align*}
    \norm{H_- \P_{F_-} \op{\psi_\epsilon}u}_{L^2(\R; \H^\ell)}
    + \val{\op{\psi_\epsilon}u\bl}_{\H^{\ell+\hf}} 
    \gtrsim
    \norm{H_-\op{\psi_\epsilon}u}_{L^2(\R; \H^{\ell+1})}.
  \end{align*}
  and arguing as in the end of Lemma~\ref{lemma: estimate PE+ PE-} we
  obtain the result. 
\end{proof}

\subsection{Negative  imaginary part on the negative half-line}
\label{3.sec.k5jhbg}
Here we place ourselves in a microlocal region where $f_- = \tau
\varphi^- - m_-$ is negative. More precisely, let $\chi(x,\tau,\xi') \in
\ST^0$ be such that $|\xi'| \geq C
\tau$ and $f_- \leq - C_1 \lambda$ in $\supp(\chi)$, $C_1>0$.
We have the following lemma whose form is adapted to our needs in the
next section. Up to harmless remainder terms, this can also be
considered as a good elliptic estimate.
%%%%%%%%%%%%%%%%%%%%%%%%
% lemma                %
%%%%%%%%%%%%%%%%%%%%%%%%
\begin{lem}
  \label{lemma: estimate PF- F- negative}
  There exist $\tau_1\geq 1$ and $C>0$  such that
  \begin{align}
    \label{eq: estimate PF-}
    C \Big( \norm{H_- \P_{F_-}u} + \norm{H_- \omega} 
    +  \norm{H_- D_n \omega}\Big)
    \geq \val{u_{|x_n=0^-}}_{\H^\hf} 
    + \norm{H_- u}_{L^2(\R; \H^1)},
  \end{align}
  for $\tau \geq \tau_1$ and 
  $u = a_{n n}^- \P_{E-} \opw{\chi} \omega$ with $\omega \in\mooc(\R^n)$.
\end{lem}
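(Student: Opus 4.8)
The proof runs parallel to that of Lemma~\ref{lemma: estimate PF- F- positive}, with one crucial change: on the microlocal support of $\chi$ the symbol $f_-=\tau\varphi'-m_-$ is now \emph{negative} and elliptic, $f_-\le -C_1\lambda$, so $-F_-$ behaves like an elliptic positive operator of order $1$ there. Accordingly the coercivity will come from a \emph{favourable boundary term} generated by the jump of $H_-$ at $\{x_n=0\}$, rather than from the positivity of the bulk symbol; this is exactly the place where the sign of $f_-$ at the interface --- hence, through \eqref{mainassu}, the geometric hypothesis on the weight --- is used. First I would put in place the usual cut-offs. Let $\psi_\epsilon$ be as in Section~\ref{sec: prelim cut-off}, and let $\tilde\psi_\epsilon$, $\tilde m_-$, $\tilde f_-=\tau\varphi'-\tilde m_-\in\ST^1$, $\tilde F_-=\opw{\tilde f_-}$ be as in the proof of Lemma~\ref{lemma: estimate PF- F- positive}, so that $\tilde f_-\ge f_-$ and, for $\epsilon$ small (so that $\supp(\psi_\epsilon)$ is disjoint from $\supp(\chi)$), $\tilde f_-=f_-\le -C_1\lambda$ on $\supp(\chi)$. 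Set $z=\op{1-\psi_\epsilon}u$. Since $\supp(\chi)\subset\{|\xi'|\ge C\tau\}$, Lemma~\ref{lemma: mixed composition} gives $\op{\psi_\epsilon}u=\op{\psi_\epsilon}a_{nn}^-\P_{E-}\opw{\chi}\omega\in\op{\ST^{-\infty}}\omega$, and, commuting $\op{1-\psi_\epsilon}$ through $a_{nn}^-\P_{E-}$, one finds $z=u+R\omega$ with $R\in\op{\ST^{-1}}D_n+\op{\ST^0}$; thus $z$ is microlocalized in a conic neighbourhood of $\supp(\chi)$, and differs from $u$ by a term controlled by $\norm{H_-\omega}+\norm{H_-D_n\omega}$. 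As in \eqref{eq: microlocal change for M} one may moreover replace $M_-$ acting on $z$ by $\tilde M_-=\opw{\tilde m_-}$ up to $\op{\ST^{-\infty}}\omega$, working throughout in the semi-classical calculus.

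Next I would run the energy computation of Lemma~\ref{lemma: estimate PF- F- positive} with the weight $iH_-F_-z$ in place of $iH_-\Lambda^s z$, so that the test function carries the symbol $f_-$. Writing $\P_{F_-}=(D_n+S_-)+iF_-$ with $D_n+S_-$ and $F_-$ formally self-adjoint and commuting with $H_-$ except for the jump $[D_n,H_-]=i\,\delta_{x_n=0}$, an integration by parts in $x_n$ gives
\[
2\re\poscal{\P_{F_-}z}{iH_-F_-z}=2\norm{H_-F_-z}^2-\poscal{z\bl}{\opw{f_-|_{x_n=0}}\,z\bl}_{L^2(\R^{n-1})}-\tau\poscal{z}{\varphi''\,H_-z}+2\re\poscal{S_-z}{iH_-F_-z}.
\]
Here the last term equals $i\poscal{z}{[S_-,M_-]H_-z}$ with $[S_-,M_-]\in\op{\ST^1}$ and is absorbed in $C\norm{H_-z}_{\H^\hf}^2$; the term $\tau\poscal{z}{\varphi''H_-z}$ is $O(\tau\norm{H_-z}^2)$ since $\varphi''\ge 0$; and, because $-f_-|_{x_n=0}\ge C_1\lambda$ on $\supp(\chi)$ and the trace $z\bl$ is microlocalized there, Lemma~\ref{lem: postivity garding tau} yields $-\poscal{z\bl}{\opw{f_-|_{x_n=0}}z\bl}\ge c\,\val{z\bl}_{\H^\hf}^2-C\,\val{z\bl}_{L^2(\R^{n-1})}^2$ --- this is the favourable boundary term. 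Finally, using the ellipticity of $\tilde f_-$ on $\supp(\chi)$ (building a parametrix along $\supp(\chi)$ via $\check f_-=\tilde f_-\tilde\chi-\lambda(1-\tilde\chi)$, elliptic negative, and Lemma~\ref{lem: postivity garding tau}, exactly as in the proof of Lemma~\ref{lemma: estimate PF- F- positive}), one gets $\norm{H_-F_-z}^2\gtrsim\norm{H_-z}_{\H^1}^2-C\norm{H_-z}^2$. Bounding the left-hand side by $\eta^{-1}\norm{H_-\P_{F_-}z}^2+\eta\norm{H_-F_-z}^2$, collecting, and, for $\tau$ large, absorbing the $O(\tau\norm{H_-z}^2)$ and $O(\norm{H_-z}_{\H^\hf}^2)$ contributions into $\norm{H_-z}_{\H^1}^2$, then applying the Young inequality, yields
\[
\norm{H_-\P_{F_-}z}+\norm{H_-\omega}+\norm{H_-D_n\omega}+\norm{H_-z}\gtrsim \val{z\bl}_{\H^\hf}+\norm{H_-z}_{\H^1}.
\]

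To conclude I would argue as at the end of the proof of Lemma~\ref{lemma: estimate PE+ PE-}: invoke Lemma~\ref{lemma: estimates for large tau} (with $\A_-=\P_{F_-}$ and $\ell=0$) for $\op{\psi_\epsilon}u$, add the two estimates, commute $\P_{F_-}$ past $\op{1\pm\psi_\epsilon}$ (the commutators lie in $\op{\ST^0}$ by Lemma~\ref{lemma: mixed composition}, hence only produce a term $\lesssim\norm{H_-u}$), and replace $z$ by $u$ at the cost of the error $R\omega$, which is controlled by $\norm{H_-\omega}+\norm{H_-D_n\omega}$; absorbing $\norm{H_-u}$ into $\norm{H_-u}_{\H^1}$ for $\tau$ large then gives \eqref{eq: estimate PF-}. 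The main obstacle --- and the only genuinely new point compared with the preceding lemmas --- is precisely the coercivity of the boundary term: one must make sure that the jump contribution from $[D_n,H_-]$, combined with $f_-\le -C_1\lambda$ at the interface, produces a positive $\H^\hf$ trace term and not a sign-indefinite one, which is the single step that uses that we sit in the region where $f_-$ is negative and elliptic; the accompanying Gårding inequality for the trace operator $-\opw{f_-|_{x_n=0}}$ and the microlocalization bookkeeping linking $z$, $u$ and $\omega$ also require some care.
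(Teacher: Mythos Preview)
Your approach differs from the paper's in two significant ways, and one of them creates a genuine gap.

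First, the $\psi_\epsilon$ cutoff is unnecessary here: since $\supp(\chi)\subset\{|\xi'|\ge C\tau\}$, the function $u=a_{nn}^-\P_{E-}\opw{\chi}\omega$ is already microlocalized where $m_-$ obeys semi-classical estimates. The paper works directly with $u$, never introducing $z=\op{1-\psi_\epsilon}u$ or invoking Lemma~\ref{lemma: estimates for large tau}.

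Second, and more importantly, the paper uses the multiplier $-iH_-\Lambda^1 u$ rather than your $iH_-F_-z$. With that choice the commutator $[D_n,-H_-]$ produces the boundary term $\val{u\bl}_{\H^\hf}^2$ directly --- no G{\aa}rding, no decomposition needed at the boundary. The only place where one must write $f_-=\check f_-+r$ (with $\check f_-=f_-\tilde\chi-\lambda(1-\tilde\chi)\in\ST^1$ elliptic negative and $r$ supported off $\supp(\chi)$) is in the \emph{bulk} term $2\re\poscal{-F_-u}{H_-\Lambda^1 u}$. There the remainder yields a \emph{volume} contribution $\opw{r}u$, which the paper shows equals $R\omega$ with $R\in\op{\ST^{-\infty}}D_n+\op{\ST^{-\infty}}$ (using $\supp(1-\tilde\chi)\cap\supp(\chi)=\emptyset$ and Lemma~\ref{lemma: mixed composition}); this is controlled by $\norm{H_-\omega}+\norm{H_-D_n\omega}$.

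Your multiplier forces $F_-$ into the boundary term $-\poscal{z\bl}{(F_-z)\bl}$. To apply Lemma~\ref{lem: postivity garding tau} you must first replace $f_-$ by a genuine $\ST^1$ symbol --- G{\aa}rding is a statement about symbols in the class, not about symbols restricted to the microlocal support of $z$; the sentence ``the trace $z\bl$ is microlocalized there, Lemma~\ref{lem: postivity garding tau} yields \dots'' is not justified as written. Carrying out the decomposition then produces a \emph{trace} remainder involving $(\opw{r}u)\bl$. Since $u$ contains $D_n\opw{\chi}\omega$, this trace involves $(D_n\omega)\bl$, which is \emph{not} controlled by $\norm{H_-\omega}+\norm{H_-D_n\omega}$ (only $\omega\bl$ is, via $\val{\omega(0)}^2\le 2\norm{H_-\omega}\,\norm{H_-D_n\omega}$). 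The same objection recurs at the end when you pass from $z$ to $u$ through $u-z=\op{\psi_\epsilon}u$: the trace $(\op{\psi_\epsilon}u)\bl$ again involves $(D_n\omega)\bl$. The $\tau^{-N}$ smallness of the smoothing operators does not help, since the constants in \eqref{eq: estimate PF-} must be uniform in $\omega$.

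In short: the paper's multiplier keeps the delicate operator $F_-$ inside a volume integral, where the remainder is harmless; your multiplier sends it to the boundary, where the remainder is not controlled by the data allowed on the left-hand side.
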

%%%% proof of lemma
\begin{proof}
  We compute  
  \begin{align*}
    &2 \re \poscal{\P_{F_-} u}
    {-i H_- \Lambda^1 u}\\ 
    &\qquad = \poscal{i [D_n, -H_-] u}{\Lambda^1 u}
    - i \poscal{[S_-, \Lambda^1] u}{H_- u}
    + 2 \re \poscal{-F_- u}{ H_- \Lambda^1 u}\\
    &\qquad\geq 
    \val{u\bl}_{\H^\hf}^2 
    + 2 \re \poscal{-F_- u} {H_- \Lambda^1 u}
    - C \norm{H_- u}_{L^2(\R; \H^\hf)}^2.
  \end{align*}
  Let now $\tilde{\chi} \in \ST^0$ satisfy the same properties as
  $\chi$, with moreover $\tilde{\chi}=1$ on a \nhd of $\supp(\chi)$. We then
  write
  $$
  f_- = \check{f}_- + r, \quad 
  \text{with}\ \ \check{f}_- = f_- \tilde{\chi} -\lambda(1-\tilde{\chi}), 
  \quad r = (f_- + \lambda)(1-\tilde{\chi}).
  $$
  Observe that $f_- \tilde{\chi} \in \ST^1$ because of the
  support of $\tilde{\chi}$. Hence $\check{f}_- \in \ST^1$. As $-\check{f}_- \geq C
  \lambda$ with Lemma~\ref{lem: postivity garding tau}
  we obtain, for $\tau$ large,  
  $\re \poscal{-\opw{\check{f}_-} u}
    {H_- \Lambda^1 u}
    \gtrsim \norm{H_- u}_{L^2(\R;\H^1)}^2$.
  Note that $r$ does not satisfy the estimates of the semi-classical 
  calculus because of the term $m_- (1-\tilde{\chi})$.
  However, we have 
  \begin{align*}
    \opw{r} u = \opw{r} a_{n n}^- \opw{\chi} D_n \omega 
    + i \opw{r} a_{n n}^- E_- \opw{\chi}\omega.
  \end{align*}
  Applying Lemma~\ref{lemma: mixed composition}, 
  using that $1-\tilde{\chi} \in \ST^0 \subset \S^0$, yields
  \begin{equation*}
  \opw{r} u = R \omega \qquad  \text{with} \ \ 
  R \in \op{\ST^{2}} D_n + \op{\ST^{2}}.
  \end{equation*}
    As $\supp(1-\tilde{\chi}) \cap \supp(\chi) =\emptyset$, the
  composition formula \eqref{eq: composition asymptotic series} (which is valid
  in this case -- see Lemma~\ref{lemma: mixed composition}) yields
  moreover $R \in \op{\ST^{-\infty}} D_n + \op{\ST^{-\infty}}$.  We
  thus find, for $\tau$ \suff large
  \begin{align*}
    \re \poscal{\P_{F_-} u}{-i H_- \Lambda^1 u} 
    + \norm{H_- \omega}^2 + \norm{H_- D_n \omega}^2
    \gtrsim \val{u\bl}_{\H^\hf}^2
    + \norm{H_- u}_{L^2(\R;\H^1)}^2,
  \end{align*}
  and we conclude with the Young inequality.
\end{proof}

\subsection{Increasing imaginary part on a  half-line}
Here we allow the symbols $f_\pm$ to change sign.  For the first-order
factor $\P_{F_\pm}$ this will lead to an estimate that 
exhibits a loss of a half derivative as can be expected.

Let $\psi_\epsilon$ be as defined in Section~\ref{sec: prelim
  cut-off} and let $\tilde{\psi}_\epsilon$ be as in the proof of
  Lemma~\ref{lemma: estimate PE+ PE-}. We define $ \tilde{f}_\pm$ and
  $\tilde{F}_\pm$ as in \eqref{eq: def tilde f} and set $\tilde{\P}_{F_\pm} = D_n + S_\pm + i \tilde{F}_\pm$.   

As $\supp(\tilde{\psi}_\epsilon)$
remains away from the sets $\{ f_\pm =0\}$ the sub-ellipticy property
of Lemma~\ref{lemma: effective sub-ellipticiy condition} is preserved
for $\tilde{f}_\pm $ in place of $f_\pm$. We shall use the following
inequality. 
%%%%%%%%%%%%%%%%%%%%%%%%
% lemma                %
%%%%%%%%%%%%%%%%%%%%%%%%
\begin{lem}
  \label{lemma: pre-garding ineqality}
  There exist $C>0$  such that for $\mu>0$ \suff large we have 
  \begin{align*}
    \rho_\pm = \mu \tilde{f}_\pm^2 + \tau \poi{\xi_n+s_\pm}{\tilde{f}_\pm} 
    \geq C \lambda^2,
  \end{align*}
  with $\lambda^2 = \tau^2 + |\xi'|^2$. 
\end{lem}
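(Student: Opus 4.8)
The plan is to prove the lower bound $\rho_\pm \geq C\lambda^2$ by splitting $(\tau,\xi')$-space into the region where $\tilde f_\pm$ is small and the region where it is not. First I would recall that $\tilde f_\pm = f_\pm + \tilde\psi_\epsilon m_\pm$ agrees with $f_\pm$ outside $\supp(\tilde\psi_\epsilon)$, and that $\supp(\tilde\psi_\epsilon)$ is a conic region of the form $\valjp{\xi'}\lesssim \tau$ that is disjoint from a conic neighborhood of $\{f_\pm=0\}$; hence the sub-ellipticity statement of Lemma~\ref{lemma: effective sub-ellipticiy condition}, namely that $|f_\pm|\leq\delta\lambda$ forces $C^{-1}\tau\leq|\xi'|\leq C\tau$ and $\poi{\xi_n+s_\pm}{f_\pm}\geq C'\lambda$, carries over verbatim with $\tilde f_\pm$ in place of $f_\pm$ (this is asserted in the paragraph just before the lemma). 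Note also that $\poi{\xi_n+s_\pm}{\tilde f_\pm} = \poi{\xi_n+s_\pm}{f_\pm}$ on the set where they coincide, and in general these Poisson brackets are symbols in $\ST^1$, so $\tau\poi{\xi_n+s_\pm}{\tilde f_\pm} = O(\tau\lambda) = O(\lambda^2)$.

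Next I would carry out the two-region argument. In the region where $|\tilde f_\pm|\leq\delta\lambda$: by the (transported) sub-ellipticity lemma we have $\poi{\xi_n+s_\pm}{\tilde f_\pm}\geq C'\lambda$ and $|\xi'|\geq C^{-1}\tau$, so $\lambda^2 \leq (1+C^2)\tau^2 \leq (1+C^2)\tau \cdot \tau\lesssim \tau\lambda$, whence $\tau\poi{\xi_n+s_\pm}{\tilde f_\pm}\geq C'\tau\lambda \geq c\lambda^2$ for a fixed $c>0$. Since $\mu\tilde f_\pm^2\geq 0$, we get $\rho_\pm\geq c\lambda^2$ independently of $\mu$ in this region. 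In the complementary region where $|\tilde f_\pm| > \delta\lambda$: here $\mu\tilde f_\pm^2 > \mu\delta^2\lambda^2$, while the other term satisfies $|\tau\poi{\xi_n+s_\pm}{\tilde f_\pm}|\leq C_2\tau\lambda\leq C_2\lambda^2$ for a constant $C_2$ depending only on the symbol seminorms of $\tilde f_\pm$ and $s_\pm$. Choosing $\mu$ large enough that $\mu\delta^2 > C_2 + 1$ gives $\rho_\pm \geq (\mu\delta^2 - C_2)\lambda^2 \geq \lambda^2$ in this region. Taking $C = \min(c,1)$ and $\mu$ as above finishes the proof.

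The main obstacle — really the only subtle point — is making sure that the transported sub-ellipticity estimate is legitimate and that the bracket terms genuinely satisfy the claimed symbol estimates uniformly in $\tau\geq\tau_1$. One has to check that $s_\pm\in\ST^1$ (it is, being homogeneous of degree one and $x$-dependent only through smooth bounded coefficients), that $\tilde f_\pm\in\ST^1$ (stated in \eqref{eq: def tilde f}), and hence $\poi{\xi_n+s_\pm}{\tilde f_\pm} = \p_{\xi_n}(\xi_n+s_\pm)\p_{x_n}\tilde f_\pm - \p_{x_n}(\xi_n+s_\pm)\p_{\xi_n}\tilde f_\pm + \p_{\xi'}\cdots \in \ST^1$ with seminorms controlled uniformly in $\tau$; this is what gives the uniform constant $C_2$. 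The cleanest way to present this is to state explicitly that all constants below depend only on finitely many $\ST$-seminorms of the symbols involved and on the constants $\delta,C,C'$ from Lemma~\ref{lemma: effective sub-ellipticiy condition}, fix $\tau_1$ from that lemma, then choose $\mu$ last. No genuinely hard estimate is required; the content is entirely in correctly bookkeeping the order of quantifiers on $\mu$, $\tau_1$, and the constants.
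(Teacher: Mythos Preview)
Your proof is correct and follows exactly the same two-region strategy as the paper's own proof: use the sub-ellipticity lemma where $|\tilde f_\pm|\leq\delta\lambda$ (there $\tilde f_\pm=f_\pm$ and $\tau\sim\lambda$, so $\tau\{\xi_n+s_\pm,\tilde f_\pm\}\gtrsim\lambda^2$), and elsewhere absorb the $O(\lambda^2)$ bracket term into $\mu\tilde f_\pm^2\geq\mu\delta^2\lambda^2$ by taking $\mu$ large. One small slip: to get $\lambda^2\leq(1+C^2)\tau^2$ you need the upper bound $|\xi'|\leq C\tau$ from Lemma~\ref{lemma: effective sub-ellipticiy condition}, not the lower bound $|\xi'|\geq C^{-1}\tau$ you cite --- but both are available there, so the argument stands.
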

%%%% proof of lemma
\begin{proof}
  If $|\tilde{f}_\pm| \leq \delta \lambda$, for $\delta$ small, then
  $\tilde{f}_\pm = f_\pm$ and $\tau \poi{\xi_n+s_\pm}{\tilde{f}_\pm} \geq C
  \lambda^2$ by Lemma~\ref{lemma: effective sub-ellipticiy condition}.

  If $|\tilde{f}_\pm| \geq \delta \lambda$, observing that $\tau
  \poi{\xi_n+s_\pm}{\tilde{f}_\pm} \in \tau \ST^1 \subset
  \ST^2$, we obtain $\rho_\pm \geq C \lambda^2$ by choosing
  $\mu$ sufficiently large.
\end{proof}

We now prove the following estimate for $\P_{F_\pm}$.
%%%%%%%%%%%%%%%%%%%%%%%%
% lemma                %
%%%%%%%%%%%%%%%%%%%%%%%
\begin{lem}
  \label{lemma: increasing root}
  Let $\ell \in \R$.
  There exist $\tau_1\geq 1$ and $C>0$ such that
  \begin{multline*}
    %\label{eq: estimate increasing root}
    C \Big(\norm{H_\pm \P_{F_\pm} \omega}_{L^2(\R; \H^{\ell})} 
    + \val{\omega_{|x_n=0^\pm}}_{\H^{\ell+\hf}}\Big) \\
    \geq  
    \tau^{-\hf} \Big(\norm{H_\pm \omega}_{L^2(\R; \H^{\ell+1})}
    + \norm{H_\pm D_n \omega}_{L^2(\R; \H^{\ell})}\Big),
  \end{multline*}
  for $\tau \geq \tau_1$ and $\omega \in\mooc(\R^n)$.
\end{lem}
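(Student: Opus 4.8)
The plan is to prove the estimate for $\P_{F_+}$ on $\R_+$; the case of $\P_{F_-}$ on $\R_-$ is obtained by the symmetric argument (with the trace term coming out with the same sign after the appropriate choice of multiplier, since $H_-$ plays the role of $H_+$ with reversed orientation). First I would reduce to the semi-classical region by the cut-off procedure already used in Lemmata~\ref{lemma: estimate PE+ PE-}--\ref{lemma: estimate PF- F- negative}: set $u = \op{1-\psi_\epsilon}\omega$, so that $M_+ \op{1-\psi_\epsilon}\omega = \opw{\tilde m_+}\op{1-\psi_\epsilon}\omega + R_1\omega$ with $R_1 \in \op{\ST^{-\infty}}$ by \eqref{eq: microlocal change for M}, and hence $\P_{F_+} u$ differs from $\tilde{\P}_{F_+} u$ by a harmless remainder. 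The $\psi_\epsilon$-piece is handled directly by Lemma~\ref{lemma: estimates for large tau} (with $\A_+ = \P_{F+}$), and the commutator $[\P_{F_+},\op{1-\psi_\epsilon}] \in \op{\ST^0}$ via Lemma~\ref{lemma: mixed composition}, so it suffices to prove the claimed inequality with $\tilde{\P}_{F_+}$ acting on $u$.

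The core computation is a positive-commutator (Carleman-type) estimate. The standard identity for elliptic weights fails because $\tilde f_+$ changes sign, so instead of testing against $i H_+ \Lambda^s u$ I would test against $i H_+ \Lambda^s (\mu \tilde F_+ + \text{something})u$ — more precisely, following the classical loss-of-half-derivative scheme, I compute
\begin{align*}
2\re \poscal{\tilde{\P}_{F_+} u}{i H_+ \big(\mu \tilde F_+ \Lambda^{s} + c \Lambda^{s}\big) u}
\end{align*}
for $s = 2\ell$, with $\mu$ large and $c$ a suitable constant. Expanding, the leading term is governed by the symbol $\rho_+ = \mu \tilde f_+^2 + \tau\poi{\xi_n + s_+}{\tilde f_+}$, which by Lemma~\ref{lemma: pre-garding ineqality} is $\geq C\lambda^2$; applying the sharp Gårding inequality (Lemma~\ref{lem: postivity garding tau}) to this positive symbol yields a lower bound $\gtrsim \norm{H_+ \Lambda^{\ell+1} u}^2$ after absorbing lower-order and commutator errors. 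The price is that the multiplier now carries an extra factor of $\tilde F_+ \sim \lambda$, so the left-hand side is bounded by $\norm{H_+\tilde{\P}_{F_+} u}_{L^2(\R;\H^\ell)}\cdot \norm{H_+ u}_{L^2(\R;\H^{\ell+1})}$; one factor of $\norm{H_+ u}_{L^2(\R;\H^{\ell+1})}$ is absorbed into the right-hand side, giving $\norm{H_+\tilde{\P}_{F_+}u}_{L^2(\R;\H^\ell)} \gtrsim \norm{H_+ u}_{L^2(\R;\H^{\ell+1})}$ — but one factor of $\lambda \geq \tau$ has been spent, which is exactly where the $\tau^{-\hf}$ on the right comes from when one re-balances using Young's inequality with the weight $\tau$. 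The trace terms generated by $[D_n, H_+]$ and by integrating by parts in $x_n$ against the $\mu\tilde F_+$ multiplier produce contributions controlled by $\val{\omega_{|x_n=0^+}}_{\H^{\ell+\hf}}$ (and $\val{\omega_{|x_n=0^+}}_{\H^{\ell}}\val{D_n\omega_{|x_n=0^+}}$, which is handled by interpolation and the form of $\P_{F_+}$), which is why the trace term appears on the left-hand side of the stated inequality.

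Finally, the term $\norm{H_+ D_n\omega}_{L^2(\R;\H^\ell)}$ on the right is recovered for free from $D_n\omega = \P_{F_+}\omega - (S_+ + iF_+)\omega$ together with the bound on $\norm{H_+\omega}_{L^2(\R;\H^{\ell+1})}$, since $S_+ \in \op{\ST^1}$ and $F_+$ is dominated by $M_+$ plus $\tau\varphi'$ on the relevant region (and on the complementary region $|\xi'|\gg\tau$ we use instead Lemma~\ref{lemma: estimate PF- F- negative}-type control; more directly, $|f_+| \lesssim \lambda$ always). I expect the main obstacle to be the bookkeeping in the positive-commutator step: precisely tracking which error terms are genuinely lower order versus which must be absorbed, and in particular verifying that the remainder $\opw{r}$-type contributions coming from the difference $\tilde f_+ - f_+$ (supported where $\psi_\epsilon$ or $\tilde\psi_\epsilon$ is active, hence giving $\ST^{-\infty}$ remainders after composition with $\op{1-\psi_\epsilon}$) really are negligible, and that the factor of $\tau$ lost is exactly $\tau^{1}$ and not more — i.e.\ that the loss is sharp at a half-derivative. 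The geometric input — condition~\eqref{mainassu} — does not enter here; it is only Lemma~\ref{lemma: effective sub-ellipticiy condition} (equivalently the sub-ellipticity~\eqref{eq: sub-ellipticity condition}) that is used, through Lemma~\ref{lemma: pre-garding ineqality}.
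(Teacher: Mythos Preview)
Your reduction via the cut-off $\psi_\epsilon$, the identification of Lemma~\ref{lemma: pre-garding ineqality} as the key positivity input, and the recovery of the $D_n\omega$ term at the end are all exactly right. The gap is in the core positive-commutator step. Testing $\tilde{\P}_{F_+}u$ against $iH_+(\mu\tilde F_+\Lambda^{2\ell}+c\Lambda^{2\ell})u$ does \emph{not} produce the symbol $\rho_+=\mu\tilde f_+^2+\tau\poi{\xi_n+s_+}{\tilde f_+}$: the $i\tilde F_+$ part of $\tilde{\P}_{F_+}$ paired with $i\mu\tilde F_+\Lambda^{2\ell}$ contributes $2\mu\tilde f_+^2$, the $(D_n+S_+)$ part paired with the same contributes $\mu\poi{\xi_n+s_+}{\tilde f_+}$ (no factor $\tau$), and the $c\Lambda^{2\ell}$ piece contributes a sign-indefinite $2c\tilde f_+$. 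You thus obtain $\mu\bigl(\tilde f_+^2+\poi{\xi_n+s_+}{\tilde f_+}\bigr)+2c\tilde f_+$, which near the characteristic set $\tilde f_+=0$ is only $\gtrsim\mu\lambda$, not $\lambda^2$; your subsequent accounting of the $\tau^{-\hf}$ loss is accordingly off (if one really had $\norm{\tilde{\P}_{F_+}u}_\ell\gtrsim\norm{u}_{\ell+1}$ there would be no loss at all).

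The paper's route is simpler and avoids an external multiplier: set $u_\ell=\Lambda^\ell u$ and expand
\[
\norm{H_+\tilde{\P}_{F_+}u_\ell}^2=\norm{H_+(D_n+S_+)u_\ell}^2+\norm{H_+\tilde F_+u_\ell}^2+2\re\poscal{(D_n+S_+)u_\ell}{iH_+\tilde F_+u_\ell}.
\]
The cross term yields $\re\poscal{i[D_n+S_+,\tilde F_+]u_\ell}{H_+u_\ell}$ plus the boundary term $\poscal{i[D_n,H_+]u_\ell}{\tilde F_+u_\ell}$, the latter bounded by $\val{u}_{\H^{\ell+\hf}}^2$. Now discard the $(D_n+S_+)$ square and use the trivial inequality $\norm{H_+\tilde F_+u_\ell}^2\ge(\mu/\tau)\,\re\poscal{\tilde F_+^2u_\ell}{H_+u_\ell}$ (valid once $\mu\le\tau$) to rewrite the surviving volume terms as
\[
\tau^{-1}\re\Poscal{\bigl(\mu\tilde F_+^2+i\tau[D_n+S_+,\tilde F_+]\bigr)u_\ell}{H_+u_\ell},
\]
whose principal symbol is exactly $\tau^{-1}\rho_+$. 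G{\aa}rding via Lemma~\ref{lemma: pre-garding ineqality} then gives $\gtrsim\tau^{-1}\norm{H_+u}_{L^2(\R;\H^{\ell+1})}^2$, and the $\tau^{-\hf}$ loss is now transparent. Everything else in your outline (patching with the $\psi_\epsilon$ piece via Lemma~\ref{lemma: estimates for large tau}, commutators in $\op{\ST^0}$, and the final $D_n$ recovery) goes through unchanged.
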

%%%%%% proof of lemma
\begin{proof}
 we set
  $u = \op{1-\psi_\epsilon} \omega$.  We start by invoking \eqref{eq:
    microlocal change for M}, and the fact that $[\tilde{\P}_{F+},\Lambda^\ell] \in \op{\ST^\ell}$, and write 
  \begin{align}
    \label{eq: u -> uell}
    \norm{H_+  \tilde{\P}_{F+} \Lambda^\ell u}
    &\lesssim
    \norm{H_+ \Lambda^\ell \tilde{\P}_{F+} u}
    + \norm{H_+ [\tilde{\P}_{F+},\Lambda^\ell] u}  
    \\
    &\lesssim
    \norm{H_+ \tilde{\P}_{F+} u}_{L^2(\R; \H^\ell)}
    + \norm{H_+ u}_{L^2(\R; \H^\ell)}\nonumber\\
    &\lesssim
    \norm{H_+ \P_{F+} u}_{L^2(\R; \H^\ell)}
    +  \norm{H_+ \omega}_{}
    +\norm{H_+  u}_{L^2(\R; \H^\ell)}\nonumber
  \end{align}  
  We set $u_\ell = \Lambda^\ell u$. We then have
  \begin{align*}
    \norm{H_+ \tilde{\P}_{F+} u_\ell}_{}^2
    &= \norm{H_+ (D_n + S_+) u_\ell}_{}^2 
    + \norm{H_+ \tilde{F}_{+} u_\ell}_{}^2
    + 2 \re \poscal{(D_n + S_+) u_\ell}{i  H_+ \tilde{F}_{+} u_\ell}\\
    &\geq \tau^{-1} 
    \re \poscal{ \big(\mu \tilde{F}_+^2 + i \tau \big[D_n + S_+,\tilde{F}_+ \big]\big)
      u_\ell}{H_+ u_\ell} 
    + \poscal{ i [D_n, H_+] u_\ell}{\tilde{F}_+  u_\ell},
  \end{align*}
  with $\mu \tau^{-1}\leq 1$. As the principal symbol (in the
  semi-classical calculus) of $\mu \tilde{F}_+^2 + i \tau \big[D_n +
  S_+,\tilde{F}_+ \big]$ is $\rho_+ = \mu \tilde{f}_+^2 + \tau \poi{ \xi_n
    + s_+}{\tilde{f}_+}$, Lemmas~\ref{lemma: pre-garding ineqality} and
  \ref{lem: postivity garding tau} yield
  \begin{align*}
    \norm{H_+ \tilde{\P}_{F+} u_\ell}_{}^2 
    + \val{u_\ell}_{\H^\hf}^2
    \gtrsim \tau^{-1} \norm{H_+ u_\ell}_{L^2(\R; \H^1)}^2,
  \end{align*}
  for $\mu$ large, \ie, $\tau$ large.
  With \eqref{eq: u -> uell} we obtain, for $\tau$ \suff large,
  \begin{align*}
    \norm{H_+ \P_{F+} u}_{L^2(\R; \H^\ell)} +  \norm{H_+  \omega}_{}
    + \val{u}_{\H^{\ell+\hf}}
    \gtrsim \tau^{-\hf} \norm{H_+ u}_{L^2(\R; \H^{\ell+1})}.
  \end{align*}
  
  We now invoke the corresponding estimate provided by
  Lemma~\ref{lemma: estimates for large tau},
  \begin{align*}
    \norm{H_+ \P_{F+} \op{\psi_\epsilon} \omega}_{L^2(\R; \H^\ell)}
    \gtrsim \val{\op{\psi_\epsilon} \omega\br}_{\H^{\ell+\hf}} 
    + \norm{H_+ \op{\psi_\epsilon} \omega}_{L^2(\R; \H^{\ell+1})}
  \end{align*}
  and we proceed as in the end of the proof of Lemma~\ref{lemma:
    estimate PE+ PE-} to obtain the result for $\P_{F+}$. The same
  computation and arguments, {\em mutatis mutandis}, give the 
  result for $\P_{F-}$.
\end{proof}

% Proof of the Carleman estimate
\section{Proof of the Carleman estimate}
\label{Sec: proof Carleman}

From the estimates for the first-order factors obtained in
Section~\ref{sec: estimate 1st order factor} we shall now prove
Proposition~\ref{mainprop} which gives the result of
Theorem~\ref{1.thm.main} and Theorem~\ref{thm.main - nonhomogeneous}
(see the end of Section~\ref{sec: presentation}).
\subsection{The geometric hypothesis}

In section~\ref{sec: choice weight function} we chose a weight function $\varphi$ that satisfies the following condition 
\begin{equation}
  \label{mainassu2}
  \frac{\alpha_{+}}{\alpha_{-}}>
   \sup_{x',\xi' \atop |\xi'|\geq 1} \frac{m_+(x',\xi') \br}
   {m_-(x',\xi')\bl}, 
  \qquad \alpha_\pm = \p_{x_n} {\varphi_\pm}_{|x_n=0^\pm}.
\end{equation}

Let us explain the immediate consequences of that assumption: first of all,
we can reformulate it by saying that
\begin{equation}\label{maasre}
\exists \sigma>1,\quad
\frac{\alpha_{+}}{\alpha_{-}}=\sigma^2
\sup_{x',\xi' \atop |\xi'|\geq 1} 
\frac{m_+(x',\xi')\br}{m_-(x',\xi')\bl}.
\end{equation}
Let $1<\sigma_{0}<\sigma$.

First, consider $(x',\xi', \tau)\in \R^{n-1}\times \R^{n-1}\times \R^{+,\ast}$,
  $|\xi'| \geq 1$, such
  that 
  \begin{equation}
  \label{cone00} 
  \tau \alpha_{+}\ge \sigma_{0} m_+(x',\xi')\br.  
  \end{equation}
  Observe that we then have 
  \begin{align}
  \label{eq: ineq cone00} 
   \tau \alpha_{+} -m_{+}(x',\xi')\br 
  &\ge \tau\alpha_{+}(1-\sigma_{0}^{-1})
  \ge \frac{\sigma_{0}-1}{2\sigma_{0}}\tau\alpha_{+}
  +\frac{\sigma_{0}-1}{2} m_+(x',\xi')\br\\ 
  &\geq
  C \lambda.\nonumber   
  \end{align} 
  We choose $\tau$ \suff large, say
  $\tau\geq \tau_2>0$, so that this inequality remains true for
  $0\leq  |\xi'|\leq 2$. It also remains true for $x_n>0$ small.
  As $f_{+} = \tau (\varphi' -\alpha_+)
  + \tau \alpha_{+} -m_{+}(x,\xi')$, for the support of $v_+$
  sufficiently small, we obtain $f_{+} \geq C \lambda$, which means
  that $f_{+}$ is elliptic positive in that region.
  
Second, if we now have $|\xi'| \geq 1$ and
  \begin{equation}\label{cone01}
    \tau \alpha_{+}\le \sigma m_+(x',\xi')\br ,
  \end{equation}
  we get that $\tau\alpha_{-}\le
  \sigma^{-1} m_-(x',\xi')\bl$: otherwise we would have
  $\tau\alpha_{-}>\sigma^{-1} m_-(x',\xi')\bl$ and thus
  \begin{align*}
    &\frac{m_-(x',\xi')\bl}{\sigma \alpha_{-}}
    <\tau
    \le \frac{\sigma m_+(x',\xi')\br}{\alpha_{+}},
  \end{align*}
  implying 
  \begin{align*}
    &\frac{\alpha_{+}}{\alpha_{-}}
    <\sigma^2\frac{m_+(x',\xi')\br}{m_-(x',\xi')\bl}
    \leq \sigma^2 \sup_{x',\xi' \atop |\xi'|\geq 1} \frac{m_+(x',\xi') \br}
   {m_-(x',\xi')\bl}
    =\frac{\alpha_{+}}{\alpha_{-}}\quad\text{which is impossible.}
  \end{align*}
  As a consequence we have
  \begin{multline}
  \label{eq: ineq cone01}
    \tau \alpha_{-} -m_{-}(x',\xi')\bl
    \le -m_{-}(x',\xi')\bl \frac{(\sigma-1)}{\sigma}
    \\
    \le-m_{-}(x',\xi')\bl \frac{(\sigma-1)}{2\sigma}
    -\frac{(\sigma-1)}{2} \tau\alpha_{-}\leq - C \lambda.
  \end{multline}
  With $f_- = \tau (\varphi' - \alpha_-) + \tau \alpha_{-}
  -m_{-}(x,\xi')$, for the support of $v_-$ sufficiently small, we
  obtain $f_{-} \leq - C \lambda$, which means that $f_{-}$ is elliptic
  negative in that region. 

\begin{figure}
\begin{center}
 %\scalebox{0.55}{\hskip-85pt\includegraphics{pic002.pdf}} \vskip-105pt 
 \begin{picture}(0,0)%
\includegraphics{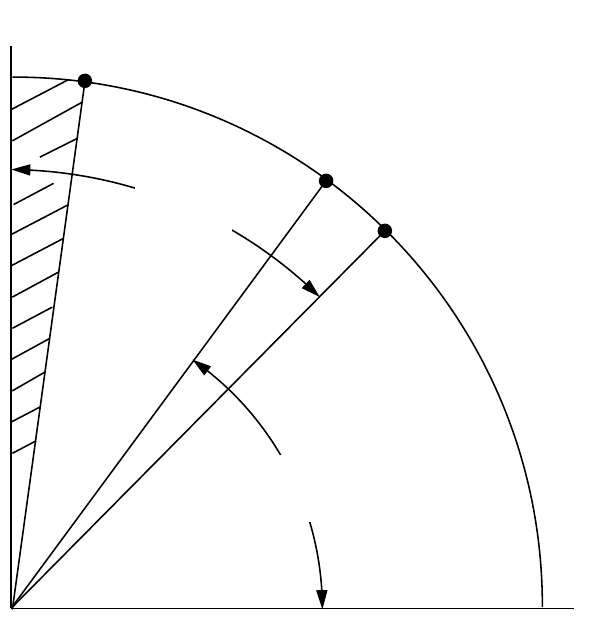}%
\end{picture}%
\setlength{\unitlength}{3947sp}%
\begingroup\makeatletter\ifx\SetFigFont\undefined%
\gdef\SetFigFont#1#2#3#4#5{%
  \reset@font\fontsize{#1}{#2pt}%
  \fontfamily{#3}\fontseries{#4}\fontshape{#5}%
  \selectfont}%
\fi\endgroup%
\begin{picture}(2829,3033)(547,-2275)
\put(562,635){\makebox(0,0)[lb]{\smash{{\SetFigFont{10}{12.0}{\rmdefault}{\mddefault}{\updefault}{\color[rgb]{0,0,0}$\tau$}%
}}}}
\put(3361,-2215){\makebox(0,0)[lb]{\smash{{\SetFigFont{10}{12.0}{\rmdefault}{\mddefault}{\updefault}{\color[rgb]{0,0,0}$|\xi'|$}%
}}}}
\put(1807,-1651){\makebox(0,0)[lb]{\smash{{\SetFigFont{8}{9.6}{\rmdefault}{\mddefault}{\updefault}{\color[rgb]{0,0,0}$\tilde{\Gamma}_\sigma$}%
}}}}
\put(2053,-1447){\makebox(0,0)[lb]{\smash{{\SetFigFont{8}{9.6}{\rmdefault}{\mddefault}{\updefault}{\color[rgb]{0,0,0}elliptic $-$}%
}}}}
\put(2059,-1309){\makebox(0,0)[lb]{\smash{{\SetFigFont{8}{9.6}{\rmdefault}{\mddefault}{\updefault}{\color[rgb]{0,0,0}$F_-$}%
}}}}
\put(1081,-589){\makebox(0,0)[lb]{\smash{{\SetFigFont{8}{9.6}{\rmdefault}{\mddefault}{\updefault}{\color[rgb]{0,0,0}elliptic $+$}%
}}}}
\put(1081,-433){\makebox(0,0)[lb]{\smash{{\SetFigFont{8}{9.6}{\rmdefault}{\mddefault}{\updefault}{\color[rgb]{0,0,0}$F_+$}%
}}}}
\put(1261,-247){\makebox(0,0)[lb]{\smash{{\SetFigFont{8}{9.6}{\rmdefault}{\mddefault}{\updefault}{\color[rgb]{0,0,0}$\Gamma_{\sigma_0}$}%
}}}}
\put(2455,-325){\makebox(0,0)[lb]{\smash{{\SetFigFont{8}{9.6}{\rmdefault}{\mddefault}{\updefault}{\color[rgb]{0,0,0}$\tau \alpha_+ = \sigma_0 m_+(x',\xi')\br$}%
}}}}
\put(2149,-37){\makebox(0,0)[lb]{\smash{{\SetFigFont{8}{9.6}{\rmdefault}{\mddefault}{\updefault}{\color[rgb]{0,0,0}$\tau \alpha_+ = \sigma m_+(x',\xi')\br$}%
}}}}
\put(942,448){\makebox(0,0)[lb]{\smash{{\SetFigFont{8}{9.6}{\rmdefault}{\mddefault}{\updefault}{\color[rgb]{0,0,0}$\epsilon \tau = \valjp{\xi'}$}%
}}}}
\end{picture}%

 \caption{The 
  overlapping microlocal regions $\Gamma_{\sigma_{0}}$, and
  $\wt\Gamma_{\sigma}$ in the $\tau,\val{ \xi'}$ plane above a point
  $x'$. Dashed is the region used in Section~\ref{sec: prelim cut-off}
  which is kept away from the overlap of $\Gamma_{\sigma_{0}}$, and
  $\wt{\Gamma}_{\sigma}$.}
  \label{fig: overlapping microlocal regions}
\end{center}
\end{figure}

\medskip
We have thus proven the following result.
\begin{lem}
  \label{4.lem.keybus}
  Let $\sigma>\sigma_{0}>1$, and $\alpha_{\pm}$,
  be positive numbers such that
  \eqref{maasre} holds.  For $s>0$, we define the following cones in
  $\R^{n-1}_{x'} \times \R^{n-1}_{\xi'}\times{\R_{+}^*}$ by
  \begin{align*}
  &\Gamma_{s} = \big\{ (x',\tau,\xi');\ |\xi'| < 2\ \text{or}\ 
  \tau \alpha_{+}>s m_+(x',\xi')\br \big\},\\
  &\wt{\Gamma}_{s} = \big\{ (x',\tau,\xi');\ |\xi'| > 1\ \text{and}\ 
  \tau \alpha_{+}<s m_+(x',\xi')\br \big\}.
  \end{align*}
  For the supports of $v_+$ and $v_-$ \suff small and $\tau$ \suff large, 
  we have $\R^{n-1}\times\R^{n-1}\times\R_{+}^*= 
  \Gamma_{\sigma_{0}}\cup \wt{\Gamma}_{\sigma}$
  and
  \begin{multline*}
    \Gamma_{\sigma_{0}} \subset
    \big\{
    (x',\xi',\tau)\! \in \R^{n-1}\!\!\times \R^{n-1}\!\!\times\R_{+}^\ast; \ 
      \forall x_n\geq 0,\ f_{+}(x,\xi')\ge C \lambda, \\
       \text{if}\ (x',x_n) \in \supp(v^+)
    \big\},
    \end{multline*}
  \begin{multline*}
    \wt{\Gamma}_{\sigma} \subset
    \big\{
      (x',\xi',\tau)\!\in\!\R^{n-1}\!\!\times \R^{n-1}\!\!\times\R_{+}^\ast; \
      \forall x_n\leq 0,\
      f_{-}(x,\xi')\leq - C \lambda, \\
      \text{if}\ (x',x_n) \in \supp(v^-)
    \big\}.
  \end{multline*}
\end{lem}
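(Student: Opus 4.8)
The three assertions are largely a reorganization of the computations carried out just above, supplemented by an elementary covering statement, so the plan is to assemble these pieces. I would first check the covering $\R^{n-1}\times\R^{n-1}\times\R_{+}^{\ast}=\Gamma_{\sigma_{0}}\cup\wt{\Gamma}_{\sigma}$, which in fact holds unconditionally once $\sigma_{0}<\sigma$: a triple with $|\xi'|<2$ lies in $\Gamma_{\sigma_{0}}$ by definition; if $|\xi'|\ge 2$ then $|\xi'|>1$, hence $m_{+}(x',\xi')\br>0$ by the ellipticity \eqref{3.ell012}, and either $\tau\alpha_{+}>\sigma_{0}m_{+}(x',\xi')\br$, so the triple is in $\Gamma_{\sigma_{0}}$, or $\tau\alpha_{+}\le\sigma_{0}m_{+}(x',\xi')\br<\sigma m_{+}(x',\xi')\br$, so it is in $\wt{\Gamma}_{\sigma}$. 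The largeness of $\tau$ and the smallness of $\supp(v_{\pm})$ enter only in the two inclusions.

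For the inclusion of $\Gamma_{\sigma_{0}}$, on the part $\{|\xi'|\ge 1,\ \tau\alpha_{+}\ge\sigma_{0}m_{+}(x',\xi')\br\}$ estimate \eqref{eq: ineq cone00} already yields $\tau\alpha_{+}-m_{+}(x',\xi')\br\ge C\lambda$, while on the remaining part $\{|\xi'|<2\}$ one uses that $m_{+}$ is bounded there (being of class $\S^{1}$) and that $\lambda\le C\tau$ there, so fixing $\tau\ge\tau_{2}$ large restores the same lower bound. To move off the interface I would write $f_{+}=\tau(\varphi'_{+}-\alpha_{+})+\bigl(\tau\alpha_{+}-m_{+}(x,\xi')\bigr)$ and note that both $\tau(\varphi'_{+}(x_{n})-\alpha_{+})$ and $m_{+}(x,\xi')-m_{+}(x',0,\xi')$ are $O(\omega(x_{n})\lambda)$ --- the former by continuity of $\varphi'_{+}$ with $\varphi'_{+}(0^{+})=\alpha_{+}$, the latter by degree-one homogeneity of $m_{+}$ in $\xi'$ together with continuity of the coefficients in $x$, uniformly in $\xi'$ --- so shrinking the $x_{n}$-extent of $\supp(v_{+})$ absorbs both terms into $C\lambda$ and gives $f_{+}\ge C\lambda$ throughout $\supp(v_{+})$. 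The inclusion of $\wt{\Gamma}_{\sigma}$ is obtained identically: the contradiction argument based on the normalization \eqref{maasre} turns $\{|\xi'|\ge 1,\ \tau\alpha_{+}\le\sigma m_{+}(x',\xi')\br\}$ into $\tau\alpha_{-}\le\sigma^{-1}m_{-}(x',\xi')\bl$, estimate \eqref{eq: ineq cone01} gives $\tau\alpha_{-}-m_{-}(x',\xi')\bl\le-C\lambda$, and the same support-shrinking step applied to $f_{-}=\tau(\varphi'_{-}-\alpha_{-})+\bigl(\tau\alpha_{-}-m_{-}(x,\xi')\bigr)$ gives $f_{-}\le-C\lambda$ on $\supp(v_{-})$.

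The only step that is not entirely automatic is the treatment of the low-frequency zone $\{|\xi'|<2\}$ inside $\Gamma_{\sigma_{0}}$, where $m_{+}$ is given neither by the elliptic formula \eqref{eq: m pm for xi' large} nor as an element of $\ST^{1}$: there one trades the frequency condition for a threshold on $\tau$ alone, which is exactly the role of $\tau_{2}$, after which the uniform-in-$\xi'$ continuity estimates used to leave the interface are routine consequences of homogeneity and of the smoothness of the diffusion coefficients.
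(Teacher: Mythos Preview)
Your proposal is correct and follows essentially the same approach as the paper: the covering argument, the use of \eqref{eq: ineq cone00} and \eqref{eq: ineq cone01} at $x_{n}=0$, the contradiction step via \eqref{maasre} on $\wt{\Gamma}_{\sigma}$, and the perturbation $f_{\pm}=\tau(\varphi'_{\pm}-\alpha_{\pm})+(\tau\alpha_{\pm}-m_{\pm}(x,\xi'))$ absorbed by shrinking $\supp(v_{\pm})$ are exactly the ingredients the paper assembles just before stating the lemma. You are in fact a bit more explicit than the paper on the low-frequency part $\{|\xi'|<2\}$ and on the modulus-of-continuity bound for $m_{\pm}(x,\xi')-m_{\pm}(x',0,\xi')$, but there is no substantive difference.
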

\begin{nb}{ The key result for the sequel is that property
    \eqref{mainassu2} is securing the fact that the overlapping open
    regions $\Gamma_{\sigma_{0}}$ and $\wt{\Gamma}_{\sigma}$ are such that on
    $\Gamma_{\sigma_{0}}$, $f_{+}$ is elliptic positive and on
    $\wt{\Gamma}_{\sigma}$, $f_{-}$ is elliptic negative.  Using a
    partition of unity and symbolic calculus, we shall be able to
    assume that either $F_{+}$ is elliptic positive, or $F_{-}$ is
    elliptic negative.  }
\end{nb}

\begin{nb}{Note that we can keep the preliminary cut-off region of Section~\ref{sec: prelim cut-off} away from the 
overlap of $\Gamma_{\sigma_{0}}$ and $\wt{\Gamma}_{\sigma}$ by
choosing $\epsilon$ \suff small (see \eqref{eq: Fourier multiplier} and Lemma~\ref{lemma: estimates for large tau}). This is illustrated in Figure~\ref{fig: overlapping microlocal regions}.}
\end{nb}

With the two overlapping cones, for $\tau\geq \tau_2$, we introduce an
homogeneous partition of unity
\begin{equation}\label{3.partition}
  1=\chi_{0}(x',\xi',\tau)+\chi_{1}(x',\xi',\tau),\quad
  \underbrace{\supp(\chi_{0})\subset 
  \Gamma_{\sigma_{0}}}_{\substack{\val{\xi'}\lesssim \tau,\ 
      f_{+}\text{ elliptic $>0$}}},\qquad
  \underbrace{\supp(\chi_{1})\subset 
  \wt{\Gamma}_{\sigma}}_{\substack{\val{\xi'}\gtrsim \tau,\ 
      f_{-}\text{ elliptic $<0$}}}.
\end{equation}
Note that $\chi_j'$, $j=0,1$, are supported at the overlap of the
 regions $\Gamma_{\sigma_{0}}$ and $\wt{\Gamma}_{\sigma}$, where
 $\tau \lesssim |\xi'|$.  Hence, $\chi_0$ and $\chi_1$ satisfy the
 estimates of the semi-classical calculus and we have $\chi_0$,
 $\chi_1 \in \ST^0$. With these symbols we associate the
 following operators.
\begin{equation}\label{}
  \Xi_{j}=\opw{\chi_{j}},\ j=0,1\text{ and we have $\Xi_{0}+\Xi_{1}=\Id$.}
\end{equation}
From the transmission conditions~\eqref{eq: transmission conditions
varphi} we find
\begin{equation}
  \label{eq: microlocal trans conditions}
  \Xi_j {v_+}\br - \Xi_j {v_-}\bl = \Xi_j  \theta_\varphi,
\end{equation}
and 
\begin{multline*}
  a_{nn}^+(D_{n}+T_{+}+i\tau\varphi'_{+})\Xi_j {v_{+}}\br - 
  a_{nn}^-(D_{n}+T_{-}+i\tau\varphi'_{-}) \Xi_j {v_{-}} \bl\\
  = \Xi_j \Theta_\varphi 
  + \opw{\kappa_0} v\br+ \opw{\tilde{\kappa}_0}\theta_\varphi,  \quad j=0,1, 
\end{multline*}
with $\kappa_0, \tilde{\kappa}_0 \in \ST^0$ that originate
from commutators and \eqref{eq: microlocal trans conditions}.  Defining
\begin{align}
  \label{eq: trace V}
  \V_{j, \pm} 
  = a_{nn}^\pm (D_{n}+S_{\pm}+i\tau\varphi'_{\pm}) \Xi_j {v_{\pm}}_{|x_n=0^\pm}
\end{align}
and recalling \eqref{eq: def S}
 we find
\begin{align}
  \label{eq: microlocal trans conditions 2}
  \V_{j, +} - \V_{j, -} = \Xi_j \Theta_\varphi  
  + \opw{\kappa_1} v\br 
  + \opw{\tilde{\kappa}_1} {\theta_\varphi}, \quad \kappa_1, \tilde{\kappa}_1 \in \ST^0.
\end{align}
We shall now prove microlocal Carleman estimates in the  two regions $\Gamma_{\sigma_{0}}$ and $\wt{\Gamma}_{\sigma}$.

\subsection{Region $\Gamma_{\sigma_{0}}$: both roots are
  positive on the positive half-line}
\label{sec: f + positive}

On the one hand, from Lemma~\ref{lemma: estimate PE+ PE-} we have
\begin{equation}
  \label{eq: est fact PE+ in action}
  \norm{H_{+}\P_{+} \Xi_0 v_{+}}\gtrsim 
  \val{\V_{0,+}-ia_{nn}^+M_{+}\Xi_0 {v_{+}}\br}_{\H^{\hf}}
  +\norm{H_{+} \P_{F+} \Xi_0  v_{+}}_{L^2(\R;\H^{1})},
\end{equation}
The positive ellipticity of $F_{+}$ on the support of $\chi_{0}$
allows us to reiterate the estimate by 
Lemma~\ref{lemma: estimate PF+ F+ positive}
to obtain
\begin{multline*}
  \norm{H_{+}\P_{+} \Xi_0 v_{+}}
  +  \norm{H_+ v_+}
  \gtrsim
  \val{\V_{0,+}-ia_{nn}^+M_{+}\Xi_0 {v_{+}}\br}_{\H^{\hf}}
  +\val{\Xi_0 {v_{+}}\br}_{\H^{3/2}}\\
  \quad + \norm{H_{+}\Xi_0 v_{+}}_{L^2(\R;\H^2)} 
  + \norm{H_{+}D_n \Xi_0 v_{+}}_{L^2(\R;\H^1)}.
\end{multline*}
Since we have also
\begin{equation}
\label{eq: est ineg triang}
\val{\V_{0,+}}_{\H^{\hf}}\lesssim
\val{\V_{0,+}-ia_{nn}^+ M_{+}\Xi_0 {v_+}\br}_{\H^{\hf}}
+\val{\Xi_0 {v_{+}}\br}_{\H^{3/2}},
\end{equation}
writing the $\H^\hf$ norm as $\val{.}_{\H^\hf}\sim \tau^\hf
\val{.}_{L^2} + \val{.}_{H^\hf}$ and using the regularity of $M_+ \in
\op{\S^1}$ in the standard calculus, we obtain
\begin{multline}
  \label{eq: low freq est +}
  \norm{H_{+}\P_{+} \Xi_0 v_{+}}
  +  \norm{H_+ v_+}
  \gtrsim
  \val{\V_{0,+}}_{\H^{\hf}}
  +\val{\Xi_0 {v_{+}}\br}_{\H^{3/2}}\\
  + \norm{H_{+}\Xi_0  v_{+}}_{L^2(\R;\H^2)} 
  + \norm{H_{+}\Xi_0 D_n  v_{+}}_{L^2(\R;\H^1)}.
\end{multline}

On the other hand, with Lemma~\ref{lemma: increasing root}
we have, for $k=0$ or $k=\hf$,
\begin{multline*}
  \norm{H_{-}\P_{-} \Xi_0 v_{-}}_{L^2(\R; \H^{-k})}
  + \val{\V_{0,-}+ia_{nn}^- M_{-}\Xi_0 {v_{-}}\bl}_{\H^{\hf-k}}\\
  \gtrsim 
  \tau^{-\hf} \norm{H_{-} \P_{E-} \Xi_0  v_{-}}_{L^2(\R;\H^{1-k})}. 
\end{multline*}
This gives
\begin{multline*}
  \norm{H_{-}\P_{-} \Xi_0 v_{-}}
  + \tau^k \val{\V_{0,-}+ia_{nn}^- M_{-}\Xi_0 {v_{-}}\bl}_{\H^{\hf-k}}
  \gtrsim 
  \tau^{k-\hf} \norm{H_{-} \P_{E-} \Xi_0  v_{-}}_{L^2(\R;\H^{1-k})},
\end{multline*}
which with Lemma~\ref{lemma: estimate PE+ PE-}  yields
\begin{multline*}
  \norm{H_{-}\P_{-} \Xi_0 v_{-}}
  + \tau^k \val{\V_{0,-}+ia_{nn}^- M_{-}\Xi_0 {v_{-}}\bl}_{\H^{\hf-k}}
  + \tau^{k-\hf}  \val{\Xi_0  {v_{-}}\bl}_{\H^{\frac{3}{2}-k}}\\
   \gtrsim 
  \tau^{k-\hf}\Big( \norm{H_{-} \Xi_0  v_{-}}_{L^2(\R;\H^{2-k})}
  + \norm{H_{-} \Xi_0 D_n v_{-}}_{L^2(\R;\H^{1-k})} \Big).
\end{multline*}
Arguing as for~\eqref{eq: est ineg triang} we find
\begin{multline}
  \label{eq: low freq est -}
  \norm{H_{-}\P_{-} \Xi_0 v_{-}}
  + \tau^k \val{\V_{0,-}}_{\H^{\hf-k}}
  + \tau^k \val{\Xi_0  {v_{-}}\bl}_{\H^{\frac{3}{2}-k}}\\
   \gtrsim 
  \tau^{k-\hf}\Big( \norm{H_{-} \Xi_0  v_{-}}_{L^2(\R;\H^{2-k})}
  + \norm{H_{-} \Xi_0 D_n v_{-}}_{L^2(\R;\H^{1-k})} \Big).
\end{multline}

Now, from the transmission conditions \eqref{eq: microlocal trans
  conditions}--\eqref{eq: microlocal trans conditions 2}, by adding
$\varepsilon \eqref{eq: low freq est -} + \eqref{eq: low freq est +}$
we obtain
\begin{multline*}
  \norm{H_{-}\P_{-} \Xi_0 v_{-}}
  +\norm{H_{+}\P_{+} \Xi_0 v_{+}}
  + \tau^k\Big( \val{\theta_\varphi}_{\H^{\frac{3}{2}-k}}
  + \val{\Theta_\varphi}_{\H^{\hf-k}} + \val{v\br}_{\H^{\hf-k}}\Big)
  + \norm{H_+ v_+}
  \\
   \gtrsim 
   \tau^k \Big(\val{\V_{0,-}}_{\H^{\hf-k}} + \val{\V_{0,+}}_{\H^{\hf-k}}
   +\val{\Xi_0  {v_{-}}\bl}_{\H^{\frac{3}{2}-k}} 
   +\val{\Xi_0  {v_{+}}\br}_{\H^{\frac{3}{2}-k}}\Big) \\
   + \tau^{k-\hf}\Big( \norm{\Xi_0  v}_{L^2(\R;\H^{2-k})}
   + \norm{H_{-} \Xi_0 D_n v_{-}}_{L^2(\R;\H^{1-k})}
   + \norm{H_{+} \Xi_0 D_n v_{+}}_{L^2(\R;\H^{1-k})} 
   \Big).
\end{multline*}
by choosing $\varepsilon>0$ \suff small and $\tau$ \suff large.
Finally, recalling the form of $\V_{0,\pm}$, arguing as for~\eqref{eq: est ineg triang} we obtain
\begin{multline}
  \label{eq: estimate low freq}
  \norm{H_{-}\P_{-} \Xi_0 v_{-}}
  +\norm{H_{+}\P_{+} \Xi_0 v_{+}}
  + \tau^k\Big( \val{\theta_\varphi}_{\H^{\frac{3}{2}-k}}
  + \val{\Theta_\varphi}_{\H^{\hf-k}} + \val{v\br}_{\H^{\hf-k}}\Big)
  + \norm{H_+ v_+}
  \\
   \gtrsim 
   \tau^k \Big( \val{\Xi_0 D_n {v_-}\bl}_{\H^{\hf-k}}
   + \val{\Xi_0 D_n {v_+}\br}_{\H^{\hf-k}} 
   +\val{\Xi_0  {v_{-}}\bl}_{\H^{\frac{3}{2}-k}} 
   +\val{\Xi_0  {v_{+}}\br}_{\H^{\frac{3}{2}-k}}\Big) \\
   + \tau^{k-\hf}\Big( \norm{\Xi_0  v}_{L^2(\R;\H^{2-k})}
   + \norm{H_{-} \Xi_0 D_n v_{-}}_{L^2(\R;\H^{1-k})}
   + \norm{H_{+} \Xi_0 D_n v_{+}}_{L^2(\R;\H^{1-k})} 
   \Big),
\end{multline}
for $k=0$ or $k=\hf$.

\begin{rem}
        \label{rem: case k=0 - 1}
  Note that in the case $k=0$, recalling the form of the second-order operators 
$\P_{\pm}$, we can estimate the additional terms 
         $\tau^{-\hf} \norm{H_{\pm} \Xi_0 D_n^2 v_{\pm}}$.   
\end{rem}

\subsection{Region $\tilde{\Gamma}_{\sigma}$: only one root is positive on the positive half-line}
\label{sec: f - negative}

This case is more difficult a priori since we cannot expect to control
$v\br$ directly from the estimates of the first-order factors.
Nevertheless when the positive ellipticity of $F_{+}$ is violated,
then $F_{-}$ is elliptic negative: this is the result of our main
geometric assumption in Lemma \ref{4.lem.keybus}.

As in~\eqref{eq: est fact PE+ in action} we have 
\begin{equation*}
  \norm{H_{+}\P_{+} \Xi_1 v_{+}}\gtrsim 
  \val{\V_{1,+}-ia_{nn}^+M_{+}\Xi_1 {v_{+}}\br}_{\H^{\hf}}
  +\norm{H_{+} \P_{F+} \Xi_1  v_{+}}_{L^2(\R;\H^1)}.
\end{equation*}
 and using Lemma~\ref{lemma: estimate PF- F- negative}
 for the negative half-line, we have
 \begin{multline*}
    \norm{H_- \P_{-} \Xi_1 v_{-}} + \norm{H_- v_{-}} +  \norm{H_- D_n v_{-}}
    \\
    \gtrsim  \val{\V_{1,-}+ia_{nn}^- M_{-}\Xi_1 {v_{-}}\bl}_{\H^\hf} 
    + \norm{H_- \P_{E-} \Xi_1  v_{-}}_{L^2(\R; \H^1)}.
  \end{multline*}
  A quick glance at the above estimate shows that none could be
  iterated in a favorable manner, since $F_{+}$ could be negative on
  the positive half-line and $E_{-}$ is indeed positive on the
  negative half-line.  We have to use the additional information given
  by the transmission conditions.  From the above inequalities, we
  control
  $$
  \tau^k \Big( \val{\V_{1,-}+ia_{nn}^-M_{-}\Xi_1 {v_{-}}\bl}_{\H^{\hf-k}}
  +\val{-\V_{1,+}+ia_{nn}^+ M_{+}\Xi_1 {v_{+}}\br}_{\H^{\hf-k}}\Big),
  $$
  for $k=0$ or $ \hf$, 
  which, by the transmission conditions \eqref{eq: microlocal trans
    conditions}--\eqref{eq: microlocal trans conditions 2} implies the
  control of
  \begin{align*}
    &\tau^k \val{\V_{1,-}-\V_{1,+} 
      + ia_{nn}^-M_{-}\Xi_1 {v_{-}}\bl+ ia_{nn}^+M_{+}\Xi_1 {v_{+}}\br}_{\H^{\hf-k}}
      \\
    &\geq 
    \tau^k \val{(a_{nn}^-M_{-}+ a_{nn}^+M_{+}) \Xi_1 {v_+}\br}_{\mathcal H^{\hf-k}}\\
    &\quad - C \tau^k  \big(\val{\Theta_\varphi}_{\mathcal H^{\hf-k}}
      + \val{\theta_\varphi}_{\mathcal H^{\frac{3}{2}-k}}
      + \val{{v_+}\br}_{\mathcal H^{\hf-k}} \big).
\end{align*}
Let now $\tilde{\chi}_1 \in \ST^0$ satisfying the same
properties as $\chi_1$, with moreover $\tilde{\chi}_1=1$ on a \nhd of
$\supp(\chi_1)$. We then write
  $$
  m_\pm = \check{m}_\pm + r, \quad 
  \text{with}\ \ \check{m}_\pm = m_\pm \tilde{\chi}_1 
  +\lambda (1-\tilde{\chi}_1), 
  \quad r = (m_\pm + \lambda)(1-\tilde{\chi}_1).
  $$
  We have $\check{m}_\pm \geq C \lambda$  and $\check{m}_\pm \in \ST^1$ because of the support of $\tilde{\chi}_1$.  Because of the supports of
  $1-\tilde{\chi}_1$ and $\chi_1$, in particular $\tau\lesssim
  \val{\xi'}$ in $\supp(\chi_1)$, Lemma~\ref{lemma: mixed composition}
  yields $r \sharp \chi_1 \in \ST^{-\infty}$.  With
  Lemma~\ref{lem: postivity garding tau} and \eqref{eq: microlocal trans conditions} we thus obtain
  \begin{multline*}
    \val{\V_{1,-}+ia_{nn}^-M_{-}\Xi_1 {v_{-}}\bl}_{\H^{\hf-k}}
  +\val{-\V_{1,+}+ia_{nn}^+ M_{+}\Xi_1 {v_{+}}\br}_{\H^{\hf-k}}\\
  +\val{\Theta_\varphi}_{\mathcal H^{\hf-k}}
      + \val{\theta_\varphi}_{\mathcal H^{\frac{3}{2}-k}}
      + \val{{v_+}\br}_{\mathcal H^{\hf-k}}
  \gtrsim \val{\Xi_1 {v_-}\bl}_{\H^{\frac{3}{2}-k}}
  + \val{\Xi_1 {v_+}\br}_{\H^{\frac{3}{2}-k}}.
  \end{multline*}
  From the form of $\V_{1,+}$ we moreover obtain
  \begin{multline*}
    \val{\V_{1,-}+ia_{nn}^-M_{-}\Xi_1 {v_{-}}\bl}_{\H^{\hf-k}}
    +\val{-\V_{1,+}+ia_{nn}^+ M_{+}\Xi_1 {v_{+}}\br}_{\H^{\hf-k}}\\
    +\val{\Theta_\varphi}_{\mathcal H^{\hf-k}}
      + \val{\theta_\varphi}_{\mathcal H^{\frac{3}{2}-k}}
      + \val{{v_+}\br}_{\mathcal H^{\hf-k}}
    \gtrsim \val{\Xi_1 {v_-}\bl}_{\H^{\frac{3}{2}-k}}
    +  \val{\Xi_1 {v_+}\br}_{\H^{\frac{3}{2}-k}}\\
    + \val{\Xi_1 D_n {v_-}\bl}_{\H^{\hf-k}}
    + \val{\Xi_1 D_n {v_+}\br}_{\H^{\hf-k}}.
  \end{multline*}
  We thus have
  \begin{multline*}
    \norm{H_- \P_{-} \Xi_1 v_{-}} + \norm{H_{+}\P_{+} \Xi_1 v_{+}}
    + \tau^k \big(\val{\Theta_\varphi}_{\mathcal H^{\hf-k}}
      + \val{\theta_\varphi}_{\mathcal H^{\frac{3}{2}-k}}
    + \val{{v_+}\br}_{\mathcal H^{\hf-k}}\Big)  
    + \norm{H_- v_{-}}\\ +  \norm{H_- D_n v_{-}}
    \gtrsim  \tau^k \Big( \val{\Xi_1 {v_-}\bl}_{\H^{\frac{3}{2}-k}}
    +  \val{\Xi_1 {v_+}\br}_{\H^{\frac{3}{2}-k}}
    + \val{\Xi_1 D_n {v_-}\bl}_{\H^{\hf-k}}\\
    + \val{\Xi_1 D_n {v_+}\br}_{\H^{\hf-k}}
    + \norm{H_- \P_{E-} \Xi_1  v_{-}}_{L^2(\R; \H^{1-k})}
    +\norm{H_{+} \P_{F+} \Xi_1  v_{+}}_{L^2(\R;\H^{1-k})}\Big),
\end{multline*}
for $k=0$ or $\hf$.
The remaining part of the discussion is very similar to the last part
of the argument in the previous subsection. By Lemmas~\ref{lemma:
  estimate PE+ PE-} and \ref{lemma: increasing root} we have
\begin{multline*}
  \norm{H_- \P_{E-} \Xi_1  v_{-}}_{L^2(\R; \H^{1-k})}
  + \val{\Xi_1 {v_-}\bl}_{\H^{\frac{3}{2}-k}}\\
  \gtrsim \norm{H_- \Xi_1  v_{-}}_{L^2(\R; \H^{2-k})}
  + \norm{H_- \Xi_1 D_n v_{-}}_{L^2(\R; \H^{1-k})}
\end{multline*}
and
\begin{multline*}
  \norm{H_+ \P_{F_+} \Xi_1 v_+}_{L^2(\R; \H^{1-k})} 
  + \val{\Xi_1 {v_+}\br}_{\H^{\frac{3}{2}-k}}\\
  \gtrsim  
    \tau^{-\hf} \Big(\norm{H_+ \Xi_1  v_{+}}_{L^2(\R; \H^{2-k})} 
    + \norm{H_+ \Xi_1 D_n v_{+}}_{L^2(\R; \H^{1-k})} \Big).
\end{multline*}
Since $\val{\Xi_1{ v_\pm}_{|x_n=0^\pm}}_{\H^{\frac{3}{2}-k}}$ are already controlled,
we control as well the \rhs of the above inequalities and have
\begin{multline}
  \label{eq: estimate high freq}
    \norm{H_- \P_{-} \Xi_1 v_{-}} + \norm{H_{+}\P_{+} \Xi_1 v_{+}}
    + \tau^k \big(\val{\Theta_\varphi}_{\mathcal H^{\hf-k}}
      + \val{\theta_\varphi}_{\mathcal H^{\frac{3}{2}-k}}
    + \val{{v_+}\br}_{\mathcal H^{\hf-k}}\Big) 
    + \norm{H_- v_{-}} \\ 
    +  \norm{H_- D_n v_{-}}
    \gtrsim \tau^k \Big(\val{\Xi_1 {v_-}\bl}_{\H^{\frac{3}{2}-k}}
    +  \val{\Xi_1 {v_+}\br}_{\H^{\frac{3}{2}-k}}
    + \val{\Xi_1 D_n {v_-}\bl}_{\H^{\hf-k}}\\
    + \val{\Xi_1 D_n {v_+}\br}_{\H^{\hf-k}}\Big)
    + \tau^{k-\hf}\Big( \norm{\Xi_1  v}_{L^2(\R;\H^{2-k})}
    + \norm{H_{-} \Xi_1 D_n v_{-}}_{L^2(\R;\H^{1-k})}\\
    + \norm{H_{+} \Xi_1 D_n v_{+}}_{L^2(\R;\H^{1-k})}
    \Big).
\end{multline}

\begin{rem}
        \label{rem: case k=0 - 2}
  Note that in the case $k=0$, recalling the form of the second-order operators 
$\P_{\pm}$, we can estimate the additional terms 
$\tau^{-\hf}\norm{H_{\pm} \Xi_1 D_n^2 v_{\pm}}$.
\end{rem}

\subsection{Patching together microlocal estimates}
\label{sec: microlocal patching}
We now sum estimates~\eqref{eq: estimate low freq} and \eqref{eq:
  estimate high freq} together. By the triangular inequality, this
gives, for $k=0$ or $\hf$,
\begin{multline*}
  \ssum_{j=0,1} \Big(\norm{H_{-}\P_{-} \Xi_j v_{-}}
  +\norm{H_{+}\P_{+} \Xi_j v_{+}}\Big)
  + \tau^k \Big(\val{\Theta_\varphi}_{\mathcal H^{\hf-k}}
      + \val{\theta_\varphi}_{\mathcal H^{\frac{3}{2}-k}}
    + \val{{v_+}\br}_{\mathcal H^{\hf-k}}\Big) \\
  +  \norm{H_+ v_+}
  + \norm{H_- v_{-}} +  \norm{H_- D_n v_{-}}\\
   \gtrsim \tau^k \Big( \val{{v_-}\bl}_{\H^{\frac{3}{2}-k}} 
   + \val{{v_+}\br}_{\H^{\frac{3}{2}-k}} 
   +  \val{ D_n {v_-}_{|x_n=0^-}}_{\H^{\hf-k}}
   +  \val{ D_n {v_+}_{|x_n=0^+}}_{\H^{\hf-k}}\Big)\\
   + \tau^{k-\hf}\Big( \norm{  v}_{L^2(\R;\H^{2-k})}
  + \norm{H_{-} D_n v_{-}}_{L^2(\R;\H^{1-k})}
  + \norm{H_{+} D_n v_{+}}_{L^2(\R;\H^{1-k})}\Big).
\end{multline*}
For $\tau$ \suff large we now obtain
\begin{multline*}
  \ssum_{j=0,1} \Big(\norm{H_{-}\P_{-} \Xi_j v_{-}}
  +\norm{H_{+}\P_{+} \Xi_j v_{+}}\Big)
  + \tau^k \Big(\val{\Theta_\varphi}_{\mathcal H^{\hf-k}}
      + \val{\theta_\varphi}_{\mathcal H^{\frac{3}{2}-k}}
      \Big)\\
       \gtrsim \tau^k \Big( \val{{v_-}\bl}_{\H^{\frac{3}{2}-k}} 
   + \val{{v_+}\br}_{\H^{\frac{3}{2}-k}} 
   +  \val{ D_n {v_-}_{|x_n=0^-}}_{\H^{\hf-k}}
   +  \val{ D_n {v_+}_{|x_n=0^+}}_{\H^{\hf-k}}\Big)\\
   + \tau^{k-\hf}\Big( \norm{  v}_{L^2(\R;\H^{2-k})}
  + \norm{H_{-} D_n v_{-}}_{L^2(\R;\H^{1-k})}
  + \norm{H_{+} D_n v_{+}}_{L^2(\R;\H^{1-k})}\Big).
\end{multline*}
Arguing with commutators, as in the end of Lemma~\ref{lemma: estimate
  PE+ PE-}, noting here that the second order operators $\P_{\pm}$
belong to the semi-classical calculus, \ie $\P_{\pm} \in \ST^2$, we
otbain, for $\tau$ \suff large,
\begin{multline*}
  \norm{H_{-}\P_{-} v_{-}}
  +\norm{H_{+}\P_{+}  v_{+}}
  + \tau^k \Big(\val{\Theta_\varphi}_{\mathcal H^{\hf-k}}
      + \val{\theta_\varphi}_{\mathcal H^{\frac{3}{2}-k}}
      \Big)\\
      \gtrsim \tau^k \Big( \val{{v_-}\bl}_{\H^{\frac{3}{2}-k}} 
   + \val{{v_+}\br}_{\H^{\frac{3}{2}-k}} 
   +  \val{ D_n {v_-}_{|x_n=0^-}}_{\H^{\hf-k}}
   +  \val{ D_n {v_+}_{|x_n=0^+}}_{\H^{\hf-k}}\Big)\\
   + \tau^{k-\hf}\Big( \norm{  v}_{L^2(\R;\H^{2-k})}
  + \norm{H_{-} D_n v_{-}}_{L^2(\R;\H^{1-k})}
  + \norm{H_{+} D_n v_{+}}_{L^2(\R;\H^{1-k})}\Big).
\end{multline*}
In particular this estimate allows us to absorb the perturbation in
$\Psi^1$ as defined by \eqref{3.classe} by taking $\tau$ large
enough. For $k=\hf$ we obtain the result of
Proposition~\ref{mainprop}, which concludes the proof of the Carleman
estimate.

\begin{nb}The case $k=0$ provides higher Sobolev norm estimates of the trace
terms ${v_\pm}_{|x_n=0^\pm}$ and $D_n {v_\pm}_{|x_n=0^\pm}$.  It also
allows one to estimate $\tau^{-\hf} \norm{H_{\pm} D_n^2 v_{\pm}}$ as
noted in Remarks~\ref{rem: case k=0 - 1} and \ref{rem: case k=0 -
2}. These estimation are obtained at the price of higher requirements
(one additional tangential half derivative) on the non-homogeneous
transmission condition functions $\theta$ and $\Theta$.
\end{nb}  

\subsection{Convexification}
\label{sec: convexification}
We want now to modify slightly the weight function $\varphi$, for
instance to allow some convexification.  We started with
$\varphi=H_{+}\varphi_{+}+H_{-}\varphi_{-}$ where $\varphi_{\pm}$ were
given by \eqref{30weight} and our proof relied heavily on a smooth
factorization in first-order factors.  
We modify $\varphi_{\pm}$ into
$$
\Phi_{\pm}(x',x_{n})=\underbrace{\alpha_{\pm}x_{n}+\frac12\beta x_{n}^2}_{\varphi_{\pm}(x_{n})}+\kappa(x',x_{n}),\quad \kappa\in\moo(\Omega;\R),\quad \val{d\kappa}\
\text{bounded on $\Omega$.}
$$
We shall prove below that the Carleman estimates of
Theorem~\ref{1.thm.main} and Theorem~\ref{thm.main - nonhomogeneous}
also holds in this case if we choose $\norm{\kappa'}_{L^\io}$
sufficiently small.

We start by inspecting what survives in our factorization argument.
We have from \eqref{2.ppm},
$
\mathcal P_{\pm}=
(D+i\tau d\Phi_{\pm})\cdot A_{\pm}(D+i\tau d\Phi_{\pm}),
$
so that, modulo $\Psi^1$,
\begin{multline}
   \label{eq: conjugated op - convex weight}
   \mathcal P_{\pm}\equiv a_{nn}^\pm \Big(
   \big[D_{n}+S_\pm(x,D')
   +i\tau\bigl(\p_{n}\Phi_{\pm}+S_\pm(x,\p_{x'}\Phi_{\pm})\bigr)
   \big]^2
\\+\frac{b_{jk}^\pm}{a_{nn}^\pm} 
  (D_{j}+i\tau\p_{j}\Phi_{\pm})
  (D_{k}+i\tau\p_{k}\Phi_{\pm})\Big).
\end{multline}
(See also \eqref{3.kbgf44}.)
The new difficulty comes from the fact that the roots in the
variable $D_{n}$ are not necessarily smooth: when $\Phi$ does not
depend on $x'$, the symbol of the term
$b_{jk}^\pm(D_{j}+i\tau\p_{j}\Phi_{\pm})(D_{k}+i\tau\p_{k}\Phi_{\pm})$
equals $b_{jk}^\pm\xi_{j}\xi_{k}$ and thus is positive elliptic with a
smooth positive square root.  It is no longer the case when we have an
actual dependence of $\Phi$ upon the variable $x'$; nevertheless, we
have, as $\p_{x'} \Phi_\pm = \p_{x'} \kappa$,
\begin{multline*}
   \re{\big( \frac{b_{jk}^\pm}{a_{nn}^\pm}
   (\xi_{j}+i\tau\p_{j}\kappa)(\xi_{k}+i\tau\p_{k}\kappa )\big)}
   = \frac{b_{jk}^\pm}{a_{nn}^\pm} \xi_j \xi_k
   -\tau^2\frac{b_{jk}^\pm}{a_{nn}^\pm} \p_{j}\kappa \p_{k}\kappa
   \\
   \ge 
   (\lambda_{0}^\pm)^2\val{\xi'}^2
   -\tau^2(\lambda_{1}^\pm)^2\val{\p_{x'}\kappa}^2
   \ge \frac34(\lambda_{0}^\pm)^2\val{\xi'}^2,\qquad
\text{if}\  
\tau\norm{\p_{x'}\kappa}_{L^\io}\le \frac{\lambda_{0}^\pm}{2\lambda_{1}^\pm}\val{\xi'},
\end{multline*}
where 
\begin{align*}
  \lambda_0^\pm = \inf_{x',\xi \atop |\xi'|=1} 
  \Big( \frac{b_{jk}^\pm}{a_{nn}^\pm} \xi_j \xi_k\Big)^\hf_{|x_n = 0^\pm}, \qquad 
  \lambda_1^\pm = \sup_{x',\xi \atop |\xi'|=1} 
  \Big( \frac{b_{jk}^\pm}{a_{nn}^\pm} \xi_j \xi_k\Big)^\hf_{|x_n = 0^\pm}, 
\end{align*}

As a result, the roots are smooth when
$\tau\norm{\p_{x'}\kappa}_{L^\io}
\le \frac{\lambda_{0}^\pm}{2\lambda_{1}^\pm}
\val{\xi'}$.

\begin{figure}
\begin{center}
 \begin{picture}(0,0)%
\includegraphics{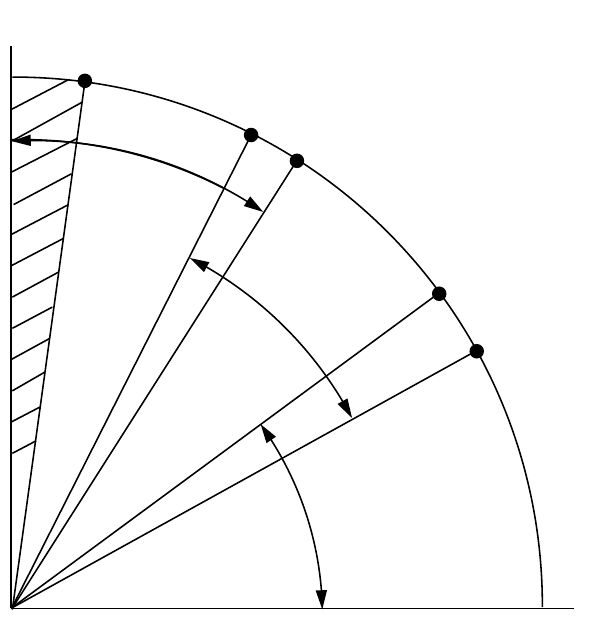}%
\end{picture}%
\setlength{\unitlength}{3947sp}%
\begingroup\makeatletter\ifx\SetFigFont\undefined%
\gdef\SetFigFont#1#2#3#4#5{%
  \reset@font\fontsize{#1}{#2pt}%
  \fontfamily{#3}\fontseries{#4}\fontshape{#5}%
  \selectfont}%
\fi\endgroup%
\begin{picture}(2829,3033)(547,-2275)
\put(562,635){\makebox(0,0)[lb]{\smash{{\SetFigFont{10}{12.0}{\rmdefault}{\mddefault}{\updefault}{\color[rgb]{0,0,0}$\tau$}%
}}}}
\put(3361,-2215){\makebox(0,0)[lb]{\smash{{\SetFigFont{10}{12.0}{\rmdefault}{\mddefault}{\updefault}{\color[rgb]{0,0,0}$|\xi'|$}%
}}}}
\put(942,448){\makebox(0,0)[lb]{\smash{{\SetFigFont{8}{9.6}{\rmdefault}{\mddefault}{\updefault}{\color[rgb]{0,0,0}$\epsilon \tau = \valjp{\xi'}$}%
}}}}
\put(2098,-1687){\makebox(0,0)[lb]{\smash{{\SetFigFont{8}{9.6}{\rmdefault}{\mddefault}{\updefault}{\color[rgb]{0,0,0}${\mathfrak F}_-$}%
}}}}
\put(2109,-1802){\makebox(0,0)[lb]{\smash{{\SetFigFont{8}{9.6}{\rmdefault}{\mddefault}{\updefault}{\color[rgb]{0,0,0}elliptic $-$}%
}}}}
\put(2719,-652){\makebox(0,0)[lb]{\smash{{\SetFigFont{8}{9.6}{\rmdefault}{\mddefault}{\updefault}{\color[rgb]{0,0,0}$\tau \alpha_+ = \sigma m_+(x',\xi')\br$}%
}}}}
\put(2901,-929){\makebox(0,0)[lb]{\smash{{\SetFigFont{8}{9.6}{\rmdefault}{\mddefault}{\updefault}{\color[rgb]{0,0,0}$\tau \alpha_+ = \sigma_0 m_+(x',\xi')\br$}%
}}}}
\put(958,-341){\makebox(0,0)[lb]{\smash{{\SetFigFont{8}{9.6}{\rmdefault}{\mddefault}{\updefault}{\color[rgb]{0,0,0}non smooth}%
}}}}
\put(964,-234){\makebox(0,0)[lb]{\smash{{\SetFigFont{8}{9.6}{\rmdefault}{\mddefault}{\updefault}{\color[rgb]{0,0,0}roots }%
}}}}
\put(1760,176){\makebox(0,0)[lb]{\smash{{\SetFigFont{8}{9.6}{\rmdefault}{\mddefault}{\updefault}{\color[rgb]{0,0,0}$2 \tau \lambda_1^+ \norm{\p_{x'} \kappa}_{L^\infty}  = \lambda_0^+ |\xi'|$}%
}}}}
\put(2032,-32){\makebox(0,0)[lb]{\smash{{\SetFigFont{8}{9.6}{\rmdefault}{\mddefault}{\updefault}{\color[rgb]{0,0,0}$4 \tau \lambda_1^+ \norm{\p_{x'} \kappa}_{L^\infty}  = \lambda_0^+ |\xi'|$}%
}}}}
\put(1802,-517){\makebox(0,0)[lb]{\smash{{\SetFigFont{8}{9.6}{\rmdefault}{\mddefault}{\updefault}{\color[rgb]{0,0,0}${\mathfrak F}_+$}%
}}}}
\put(1813,-656){\makebox(0,0)[lb]{\smash{{\SetFigFont{8}{9.6}{\rmdefault}{\mddefault}{\updefault}{\color[rgb]{0,0,0}elliptic $+$}%
}}}}
\end{picture}%

 \caption{The 
  overlapping microlocal regions in the case of a convex weight function.}
  \label{fig: microlocal regions - convexification}
\end{center}
\end{figure}

In this case, we define $\mathfrak{m}_{\pm}\in \S^1$ such that 
\begin{align*}
        \text{for $\val {\xi'}\ge 1$, }
  \mathfrak{m}_\pm(x,\xi')  = \Big( \frac{b_{jk}^\pm}{a_{nn}^\pm}
   (\xi_{j}+i\tau\p_{j}\kappa)(\xi_{k}+i\tau\p_{k}\kappa)\Big)^\hf, 
   \quad \mathfrak{m}_\pm(x,\xi') \geq C \valjp{\xi'}.
\end{align*}
Here we use the principal value of the square root function for complex numbers.

Introducing 
\begin{align*}
  &{\mathfrak e}_\pm = \tau \big( \p_{n}\Phi_{\pm} + S_\pm(x,\p_{x'}\kappa)\big)
 + \re {\mathfrak m}_\pm(x,\xi'),\\
  &{\mathfrak f}_\pm = \tau \big( \p_{n}\Phi_{\pm} + S_\pm(x,\p_{x'}\kappa)\big)
 - \re {\mathfrak m}_\pm(x,\xi')
\end{align*}
we set ${\mathfrak E}_\pm = \op{{\mathfrak e}_\pm}$ and ${\mathfrak F}_\pm = \op{{\mathfrak f}_\pm}$ and 
\begin{align*}
   &\P_{{\mathfrak E}\pm} = D_{n}+S_\pm(x,D') -\opw{\im {\mathfrak m}_\pm} + i{\mathfrak E}_\pm, \\
   &\P_{{\mathfrak F}\pm} = D_{n}+S_\pm(x,D') + \opw{\im {\mathfrak m}_\pm} + i {\mathfrak F}_\pm.
\end{align*}
Modulo the operator class $\Psi^1$, as in Section~\ref{sec: pseudo factor}, we may write 
\begin{equation*}
  \P_{+}\equiv \P_{{\mathfrak E}+} a_{nn}^+ \P_{{\mathfrak F}+}, 
  \qquad 
  \P_{-}\equiv \P_{{\mathfrak F}-} a_{nn}^- \P_{{\mathfrak E}-},
\end{equation*}

We keep the notation $m_\pm$ for the symbols that correspond to
the previous sections, \ie, if $\kappa$ vanishes: 
\begin{align*}
  m_{\pm}(x,\xi') = \Big( \frac{b_{jk}^\pm}{a_{nn}^\pm}
   \xi_{j}\xi_{k}\Big)^\hf, \quad |\xi'| \geq 1,
\end{align*}
 
As above, see \eqref{mainassu2}, we choose the weight function  such that 
the following property is fulfilled 
\begin{equation*}
  \frac{\alpha_{+}}{\alpha_{-}}>\sup_{x',\xi' \atop |\xi'|\geq 1} \frac{m_+(x',\xi') \br}
   {m_-(x',\xi')\bl}, 
  \qquad \alpha_\pm = \p_{x_n} {\varphi_\pm}_{|x_n=0^\pm},
\end{equation*}
and we let $\sigma >1$ be such that 
$$
\frac{\alpha_{+}}{\alpha_{-}}  =\sigma^2 \sup_{x',\xi' \atop |\xi'|\geq 1} \frac{m_+(x',\xi') \br}
   {m_-(x',\xi')\bl}.
$$
We also introduce $1 < \sigma_0 < \sigma$. As in Section~\ref{sec:
pseudo factor} we set $f_\pm = \tau \varphi_\pm' -m_\pm$
(compare with ${\mathfrak f}_\pm$ above).

We can choose $\alpha_{+}/\norm{\p_{x'}\kappa}_{L^\io}$  large enough so that
$$
\frac{\sigma m^+\br}{\alpha_{+}}
< \frac{\lambda_{0}^+ |\xi'|}{4\lambda_{1}^+\norm{\p_{x'}\kappa}_{L^\io}}
$$
and 
\begin{align}
        \label{eq: convex f +/- positive}
f_\pm \geq C \lambda, \quad \text{if}\  \tau\ge 
\val{\xi'}\frac{\lambda_{0}^+}{4\lambda_{1}^+\norm{\p_{x'}\kappa}_{L^\io}}
\ \  \text{for}\ |x_n|\ \text{\suff small}.
\end{align}

We may then consider the following cases.
\begin{enumerate}
\item When $\tau \alpha_+  \le \sigma m^+(x',\xi')\br$, then
arguing as for \eqref{cone01}-\eqref{eq: ineq cone01} we find $$\tau
(\alpha_- + \beta x_n) - m_-(x',\xi')\bl \leq -C \lambda,$$ if the
support of $v^-$ is chosen sufficiently small.  It follows that
${\mathfrak F}_{-}$ is elliptic negative, if
$\alpha_{+}/\norm{\kappa'}_{L^\io}$ is \suff small. In this
region we may thus argue as we did in Section~\ref{sec: f - negative}.

\item When $\dis \frac{\lambda_{0}^+ \val{\xi'}}{2\lambda_{1}^+\norm{\p_{x'}\kappa}_{L^\io}}
\ge  \tau\ge \frac{\sigma_{0} m_+(x',\xi')}{\alpha_{+}}$, the factorization is valid.
Arguing as in \eqref{cone00}-\eqref{eq: ineq cone00} we find that
$$\tau (\alpha_- + \beta x_n) - m_+(x',\xi') \leq C \lambda,$$
if the support
of $v^+$ is chosen sufficiently small.
It follows that
${\mathfrak F}_{+}$ is elliptic positive, if
$\alpha_{+}/\norm{\kappa'}_{L^\io}$ is \suff small.  In this
region we may thus argue as we did in Section~\ref{sec: f + positive}.

It is important to note that for $\beta$ large and
 $\norm{\kappa'}_{L^\infty}$ and $\norm{\kappa''}_{L^\infty}$
 \suff small the weight functions $\Phi_\pm$ satisfy the (necessary and
 sufficient) sub-ellipticity condition \eqref{eq: sub-ellipticity
 condition} with a loss of a half derivative.  Then the counterpart of
 Lemma~\ref{lemma: effective sub-ellipticiy condition} becomes, for
 $\norm{\kappa'}_{L^\infty}$ \suff small,
 $$|{\mathfrak f}_\pm| \leq \delta \lambda \quad \Longrightarrow \quad 
  C^{-1} \tau \leq |\xi'| \leq C \tau 
  \ \ \text{and}\ \  \{ \xi_n + s_\pm + \im (\mathfrak{m}_\pm),
   {\mathfrak f}_\pm\} \geq C' \lambda,$$
 for some $\delta>0$ chosen \suff small.
 This allows us to then obtain the same results as that of
 Lemma~\ref{lemma: increasing root} for the first-order factors
 $\P_{{\mathfrak F}\pm}$.

\item Finally we consider the region
$\dis  \tau\ge 
\val{\xi'}\frac{\lambda_{0}^+}{4\lambda_{1}^+\norm{\p_{x'}\kappa}_{L^\io}}$. There the roots are no longer smooth,
but we are well-inside an elliptic region; with perturbation argument,
we may in fact 
disregard the contribution of $\kappa$.

From \eqref{eq: conjugated op - convex weight} we may write 
\begin{align}
   \label{eq: conjugated op - convex weight2}
   \mathcal P_{\pm}\equiv \underbrace{a_{nn}^\pm \Big(
   \big[D_{n}+S_\pm(x,D')
   +i\tau \p_{n}\varphi_{\pm}
   \big]^2 +\frac{b_{jk}^\pm}{a_{nn}^\pm} 
  D_{j} D_{k}\Big)}_{P_{\pm}^0} + R_\pm, 
\end{align}
with $R_\pm = R_{1,\pm}(x,D',\tau) D_n + R_{2,\pm}(x,D',\tau)$, 
where $R_{j,\pm} \in \opw{\ST^j}$, $j=1,2$, that satisfy 
\begin{align}
   \label{eq: smallness remainder}
   \norm{R_{j,\pm}(x,D',\tau) u}
   \leq C \norm{\kappa'}_{L^\infty}
   \norm{u}_{L^2(\R; \H^{j})} 
\end{align}
The first term $P_{\pm}^0$ in \eqref{eq: conjugated op - convex weight2}
corresponds to the conjugated operator in the sections above, where
the weight function only depended on the $x_n$ variable.
This term can be factored in two pseudo-differential first-order terms:
\begin{align}
    \P_{+}^0\equiv \P_{E+} a_{nn}^+ \P_{F+}, 
  \qquad 
  \P_{-}^0\equiv \P_{F-} a_{nn}^- \P_{E-},
\end{align}
with the notation we introduced in Section~\ref{sec: pseudo factor}.
In this third region we have $f_\pm \geq C \lambda$
by \eqref{eq: convex f +/- positive}. Let $\chi_2 \in \ST^0$ be a symbol
that localizes in this region and set $\Xi_2 = \opw{\chi_2}$.

For $\norm{\kappa'}_{L^\infty}$ bounded with \eqref{eq: smallness
remainder2} we have
\begin{align}
        \label{eq: smallness remainder2}
        &\norm{H_\pm R_{1,\pm} D_n \Xi_2 v_\pm} 
        \lesssim \tau^k \norm{\kappa'}_{L^\infty} 
        \norm{H_\pm D_n \Xi_2 v_\pm}_{L^2(\R; \H^{1-k}} 
        + C(\kappa) \norm{H_\pm D_n v_\pm}, \\
        \label{eq: smallness remainder3}
        &\norm{H_\pm R_{2,\pm} D_n \Xi_2 v_\pm} 
        \lesssim \tau^k \norm{\kappa'}_{L^\infty} 
        \norm{H_\pm \Xi_2 v_\pm}_{L^2(\R; \H^{2-k}} 
        + C(\kappa) \norm{H_\pm v_\pm},
\end{align}
for $k=0$ or $\hf$. 

On the one hand, arguing as in Section~\ref{sec: f + positive} we have 
(see \eqref{eq: low freq est +})
\begin{multline}
  \label{eq: low freq est + convex}
  \norm{H_{+}\P_{+}^0 \Xi_2 v_{+}}
  +  \norm{H_+ v_+}
  \gtrsim
  \val{\V_{2,+}}_{\H^{\hf}}
  +\val{\Xi_2 {v_{+}}\br}_{\H^{3/2}}\\
  + \norm{H_{+}\Xi_2  v_{+}}_{L^2(\R;\H^2)} 
  + \norm{H_{+}\Xi_2 D_n  v_{+}}_{L^2(\R;\H^1)},
\end{multline}
 where $\V_{2,\pm}$ is given as in \eqref{eq: trace V}.

On the other hand, with Lemma~\ref{lemma: estimate PF- F- positive} we have
\begin{multline*}
    \norm{H_- \P_{-}^0 \Xi_2 v_{-}}_{L^2(\R;\H^{-k})} 
    + \norm{H_- v_{-}} +  \norm{H_- D_n v_{-}}\\
    + \val{\V_{2,-}+ia_{nn}^- M_{-}\Xi_2 {v_{-}}\bl}_{\H^{\hf-k}} 
    \gtrsim  
    \norm{H_- \P_{E-} \Xi_2  v_{-}}_{L^2(\R; \H^{1-k})},
  \end{multline*}
for $k=0$ or $\hf$, which gives
\begin{multline*}
    \norm{H_- \P_{-}^0 \Xi_2 v_{-}} 
    + \tau^k \norm{H_- v_{-}} + \tau^k \norm{H_- D_n v_{-}}\\
    + \tau^k \val{\V_{2,-}+ia_{nn}^- M_{-}\Xi_2 {v_{-}}\bl}_{\H^{\hf-k}} 
    \gtrsim  
    \tau^k \norm{H_- \P_{E-} \Xi_2  v_{-}}_{L^2(\R; \H^{1-k})}.
  \end{multline*}

Combined with Lemma~\ref{lemma: estimate PE+ PE-} we obtain 
\begin{multline}
        \label{eq: low freq est - convex}
    \norm{H_- \P_{-}^0 \Xi_2 v_{-}} 
    + \tau^k \Big(\norm{H_- v_{-}} +  \norm{H_- D_n v_{-}}
    + \val{\V_{2,-}}_{\H^{\hf-k}} 
    + \val{\Xi_2 {v_{-}}_{|x_n=0^-}}_{\H^{\frac{3}{2}-k}}\Big)\\
    \gtrsim \tau^k \norm{H_-  \Xi_2 v_{-}}_{L^2(\R;\H^{2-k})}
    + \tau^k \norm{H_+  \Xi_2 D_n v_{-}}_{L^2(\R;\H^{1-k})}
  \end{multline}
Now, from the transmission conditions \eqref{eq: microlocal trans
  conditions}--\eqref{eq: microlocal trans conditions 2}, by adding
$\varepsilon \eqref{eq: low freq est - convex} + \eqref{eq: low freq est + convex}$
we obtain, for $\varepsilon$ small, 
\begin{multline}
    \norm{H_{+}\P_{+}^0 \Xi_2 v_{+}}
    +\norm{H_- \P_{-}^0 \Xi_2 v_{-}} 
    + \tau^k\Big( \val{\theta_\varphi}_{\H^{\frac{3}{2}-k}}
    + \val{\Theta_\varphi}_{\H^{\hf-k}} + \val{v\br}_{\H^{\hf-k}}\Big)
    \\
    + \tau^k  \big( \norm{H_- v_{-}} +  \norm{H_- D_n v_{-}}
    \big)
    +  \norm{H_+ v_+}+  \norm{H_+ D_n v_{+}}
    \\
    \gtrsim 
    \tau^k \Big( \val{\Xi_2 D_n {v_-}_{|x_n=0^-}}_{\H^{\hf-k}}
    + \val{\Xi_2 D_n {v_+}_{|x_n=0^+}}_{\H^{\hf-k}}\\
    +\val{\Xi_2  {v_-}\bl}_{\H^{\frac{3}{2}-k}} 
    +  \val{\Xi_2  {v_+}\br}_{\H^{\frac{3}{2}-k}}  
    + \norm{ \Xi_2 v}_{L^2(\R;\H^{2-k})}\\
    + \norm{H_- \Xi_2 D_n v_{-}}_{L^2(\R;\H^{1-k})}
    + \norm{H_+  \Xi_2 D_n v_{+}}_{L^2(\R;\H^{1-k})}
    \Big).
  \end{multline}
 With~\eqref{eq: smallness remainder2}--\eqref{eq: smallness remainder3} we see that the same estimate
  holds for $\P_\pm$ in place of $\P_\pm^0$ for
  $\norm{\kappa'}_{L^\io}$ chosen sufficiently small.  This estimate
  is of the same quality as those obtained in the two other regions.
\end{enumerate}

Summing up, we have obtained three microlocal overlapping regions and
estimates in each of them. The three regions are illustrated in
Figure~\ref{fig: microlocal regions - convexification}.  As we did
above we make sure that the preliminary cut-off region of
Section~\ref{sec: prelim cut-off} does not interact with the
overlapping zones by choosing $\epsilon$ \suff small (see \eqref{eq:
Fourier multiplier} and Lemma~\ref{lemma: estimates for large tau}).

The overlap of the regions allows us to use a partition of unity
argument and we can conclude as in Section~\ref{sec:
microlocal patching}.

% Necessity of the assumption on the weight function
\section{Necessity of the geometric assumption on the weight function}
\label{sec: counter-example}

Considering the operator $\mathcal L_{\tau}$ given by
\eqref{1.toy007}, we may wonder about the relevance of conditions
\eqref{1.toybis} to derive a Carleman estimate.  In the simple model
and weight used here, it turns out that we can show that condition
\eqref{1.toybis} is necessary for an estimate to hold.  For simplicity,
we consider a {\em piecewise constant} case $c=H_{+}c_{+}+H_{-}c_{-}$ as in
Section~\ref{sec: sketch of the proof}. 

%%%%%%%%%%%%%%%%%%%%%%%%
% theorem              %
%%%%%%%%%%%%%%%%%%%%%%%%
\begin{theorem}
  \label{theorem: sharp condition on weight}
  Let us assume that
  \eqref{1.toy011} is violated, \ie,
  \begin{equation}
    \label{5.violation}
    \exists \xi_{0}'\in \R^{n-1}\setminus 0,\quad
    \frac{\alpha_{+}}{\alpha_{-}}<\frac{m_{+}(\xi'_{0})}{m_{-}(\xi'_{0})}.
  \end{equation}
  Then, for any neighborhood $V$ of the origin, $C>0$, and $\tau_0>0$,
  there exists
  \begin{align*}
    v = H_{+}v_{+}+H_{-}v_{-},  \quad v_{\pm}\in \mooc(\R^n),
  \end{align*}
  satisfying the transmission conditions \eqref{eq: trans
    cond1}--\eqref{eq: trans cond2} at $x_n=0$, and $\tau \geq
  \tau_0$, such that
  \begin{align*}
    \supp (v) \subset V \quad \text{and} \quad 
    C \norm{\mathcal L_{\tau} v}_{L^2(\R^{n-1}\times \R)} 
    \leq \norm{v}_{L^2(\R^{n-1}\times \R)},
  \end{align*}
  \end{theorem}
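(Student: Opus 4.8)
The plan is to \emph{disprove} the Carleman estimate: when \eqref{5.violation} holds I construct a compactly supported quasi-mode of $\L_{\tau}$, localized near the origin and oscillating at a frequency tuned to the failing ratio. This is a direct complex geometric optics construction, made possible by the exact factorization \eqref{1.toy007}. \textbf{Choice of parameters.} Pick $\xi_{0}'$ as in \eqref{5.violation}. Since $\alpha_{+}/\alpha_{-}<m_{+}(\xi_{0}')/m_{-}(\xi_{0}')$ is equivalent to $\alpha_{+}/m_{+}(\xi_{0}')<\alpha_{-}/m_{-}(\xi_{0}')$, the open interval $\bigl(\alpha_{+}/m_{+}(\xi_{0}'),\alpha_{-}/m_{-}(\xi_{0}')\bigr)$ is non-empty; fix $\theta$ in it and set $\xi_{\ast}'=\tau\theta\xi_{0}'$. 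By homogeneity of the $m_{\pm}$ and continuity, there are $c,\delta>0$ (with $\delta$ small and fixed) such that for $\tau$ large, $|\xi'-\xi_{\ast}'|\le\tau^{1/2}$ and $|x_{n}|\le\delta$ one has $f_{+}=\tau(\alpha_{+}+\beta x_{n})-m_{+}(\xi')\le-c\tau$ while $f_{-}=\tau(\alpha_{-}+\beta x_{n})-m_{-}(\xi')\ge c\tau$. This double sign, impossible when \eqref{1.toy011} holds, drives the whole construction.

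\textbf{One-dimensional profiles and the transmission conditions.} For each such $\xi'$ I produce $a_{\pm}(\cdot\,;\xi')\in\moo(\R)$ with $a_{\pm}(0;\xi')=1$ and $\L_{\tau}\bigl(e^{ix'\cdot\xi'}a_{\pm}(x_{n};\xi')\bigr)=0$ on $\{\pm x_{n}>0\}$, using \eqref{1.toy007} together with $\ssum_{j<n}c_{j}^{\pm}\xi_{j}^{2}=c_{n}^{\pm}m_{\pm}(\xi')^{2}$. On the positive side I solve the \emph{innermost} factor, $(D_{n}+if_{+})a_{+}=0$, i.e.\ $a_{+}=\exp\bigl(\int_{0}^{x_{n}}f_{+}\bigr)$, which is decreasing on $(0,\delta)$ because $f_{+}<0$ there, and for which $(D_{n}a_{+}+i\tau\alpha_{+}a_{+})|_{x_{n}=0}=im_{+}(\xi')$. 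On the negative side the analogous pure mode $(D_{n}+ie_{-})a_{-}=0$ is \emph{incompatible with the flux transmission condition} (it would force $c_{n}^{+}m_{+}(\xi')=-c_{n}^{-}m_{-}(\xi')$), so instead I set $(D_{n}+ie_{-})a_{-}=b_{-}$ with $(D_{n}+if_{-})b_{-}=0$; because $f_{-}>0$ on $(-\delta,0)$, the function $b_{-}=b_{-}(0)\exp\bigl(\int_{0}^{x_{n}}f_{-}\bigr)$ is exponentially small (in $\tau$) at $x_{n}=-\delta$, and the resulting $a_{-}$, normalized by $a_{-}(0)=1$, is bounded on $(-\delta,0)$, concentrated near $0$ at scale $\tau^{-1}$, and satisfies $(D_{n}a_{-}+i\tau\alpha_{-}a_{-})|_{x_{n}=0}=b_{-}(0)-im_{-}(\xi')$. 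Choosing $b_{-}(0;\xi')=i\bigl(c_{n}^{+}m_{+}(\xi')+c_{n}^{-}m_{-}(\xi')\bigr)/c_{n}^{-}$ makes \eqref{eq: trans cond1}--\eqref{eq: trans cond2} hold identically in $\xi'$. (This is exactly where the failure of the geometric condition enters: only $f_{-}(0;\xi')>0$ allows $b_{-}$ to be small near $x_{n}=-\delta$.)

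\textbf{Wave packet, cut-off, and conclusion.} Let $\widehat{\phi}_{\tau}(\xi')=\tau^{(n-1)/4}\chi_{0}\bigl(\tau^{-1/2}(\xi'-\xi_{\ast}')\bigr)$ with $\chi_{0}\in\mooc(\R^{n-1})$, $\chi_{0}(0)\ne0$, so that $\norm{\widehat{\phi}_{\tau}}_{L^{2}}\sim1$ and $\supp\widehat{\phi}_{\tau}$ lies in the ball of the first step. Put $v_{\pm}^{\mathrm{pre}}(x)=\int e^{ix'\cdot\xi'}\widehat{\phi}_{\tau}(\xi')\,a_{\pm}(x_{n};\xi')\,d\xi'$, so that $\L_{\tau}v_{\pm}^{\mathrm{pre}}=0$ on $\{\pm x_{n}>0\}$ by linearity, while the $x'$-profile is rapidly decreasing beyond scale $\tau^{-1/2}$; hence $v_{\pm}^{\mathrm{pre}}$ is concentrated in $\{|x'|\lesssim\tau^{-1/2},\ 0<\pm x_{n}\lesssim\tau^{-1}\}$. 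Set $v_{\pm}=\chi_{n}(x_{n})\psi(x')\,v_{\pm}^{\mathrm{pre}}$ with $\chi_{n},\psi\in\mooc$ equal to $1$ near $0$, $\supp\chi_{n}\subset(-\delta,\delta)$ and $\supp\psi$ small, so that $\supp v_{\pm}\subset V$, and $v=H_{+}v_{+}+H_{-}v_{-}\in\mooc$. Since the cut-offs are $\equiv1$ near $x_{n}=0$, the transmission conditions survive, $v\in\W_{\tau}$, and $\L_{\tau}v=H_{+}\L_{\tau}v_{+}+H_{-}\L_{\tau}v_{-}$ with no interface distribution. Now $\L_{\tau}v_{\pm}=[\L_{\tau},\chi_{n}\psi]v_{\pm}^{\mathrm{pre}}$ is a first-order expression supported where $\nabla(\chi_{n}\psi)\ne0$, i.e.\ where $|x_{n}|\sim\delta$ or $|x'|\gtrsim c_{0}>0$; there $v_{\pm}^{\mathrm{pre}}$ and its derivatives are rapidly decreasing in $\tau$, so $\norm{\L_{\tau}v}_{L^{2}(\R^{n-1}\times\R)}=O(\tau^{-\infty})$. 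On the other hand, where $\chi_{n}\psi\equiv1$ one has $v_{\pm}=v_{\pm}^{\mathrm{pre}}$, and Plancherel in $x'$ gives $\norm{v}_{L^{2}}^{2}\gtrsim\int|\widehat{\phi}_{\tau}(\xi')|^{2}\Bigl(\int_{\,0<\pm x_{n}<\delta/2}|a_{\pm}(x_{n};\xi')|^{2}\,dx_{n}\Bigr)d\xi'\gtrsim c\,\tau^{-1}$, using $a_{\pm}(0;\xi')=1$ and the scale-$\tau^{-1}$ concentration. Hence $\norm{\L_{\tau}v}_{L^{2}}/\norm{v}_{L^{2}}=O(\tau^{-\infty})$, so given $C>0$ and $\tau_{0}>0$ we pick $\tau\ge\tau_{0}$ large enough that $C\norm{\L_{\tau}v}_{L^{2}}\le\norm{v}_{L^{2}}$, with $\supp v\subset V$, which contradicts any Carleman estimate with this weight.

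\textbf{Main obstacle.} Two points need care. First, the flux transmission condition forbids the naïve exponential mode on $\{x_{n}<0\}$ and is reconciled only by the ``second-factor'' construction $(D_{n}+ie_{-})a_{-}=b_{-}$, whose success rests precisely on $f_{-}(0;\xi')>0$ — the negation of the geometric hypothesis \eqref{1.toy011}. Second, obtaining genuine compact support without spoiling the estimate: a bare plane wave $e^{ix'\cdot\xi_{\ast}'}$ cut off in $x'$ would produce an error of size $\sim|\xi_{\ast}'|\sim\tau\gg\norm{v}_{L^{2}}\sim\tau^{-1/2}$, so one must superpose over a $\xi'$-packet of width $\sim\tau^{1/2}$ (more generally, any width $\gg1$ and $\ll\tau$) to obtain true rapid decay in $x'$; meanwhile the convexification term $\tfrac12\beta x_{n}^{2}$ in \eqref{1.toy003} makes the WKB profiles grow at $O(1)$ distance from the interface, forcing the $x_{n}$ cut-off to sit at a fixed small $\delta$ where the profiles are already exponentially small. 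Balancing the packet width against these two constraints is the crux of the argument.
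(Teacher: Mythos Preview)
Your construction is correct and essentially identical to the paper's: exponentially decaying profiles obtained from the factored first-order equations on each side, matched at $x_n=0$ through the transmission conditions (your description $(D_n+ie_-)a_-=b_-$ with $(D_n+if_-)b_-=0$ and $b_-(0)=i(c_n^+m_++c_n^-m_-)/c_n^-$ is exactly the paper's linear combination $aQ_-+b\tilde Q_-$ with $a+b=1$, $c_n^+m_+=c_n^-(a-b)m_-$), then localized by a frequency packet and physical-space cutoffs. The only superficial differences are your packet width $\tau^{1/2}$ around $\xi'_*=\tau\theta\xi'_0$ versus the paper's conic cutoff of width $\sim\tau$, and your fixed small $\delta$ versus the paper's shrinking parameter $\gamma$; both choices yield the same $O(\tau^{-\infty})$ commutator error after the non-stationary-phase argument you allude to in the final paragraph.
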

  To prove Theorem~\ref{theorem: sharp condition on weight} we wish to
  construct a function $v$, depending on the parameter $\tau$, such
  that $\norm{\mathcal L_{\tau}v}_{L^2}\ll \norm{v}_{L^2}$ as $\tau$
  becomes large. The existence of such a quasi-mode $v$ obviously
  ruins any hope to obtain a Carleman estimate for the operator
  $\mathcal L$ with a weight function satisfying
  \eqref{5.violation}. The remainder of this section is devoted to
  this construction.
  
  We set 
  \begin{align}
    \label{eq: definition Mtau}
    (\mathcal M_{\tau} u)(\xi',x_{n})
    &=H_{+}(x_{n})c_{n}^+\big(D_{n}+ i e_+\big)\big(D_{n}+if_+\big)u_{+}\\
    &\quad +
    H_{-}(x_{n})c_{n}^- \big(D_{n}+ie_-\big) \big(D_{n}+if_-\big)u_{-},
    \nonumber
  \end{align}
  that is, the action of the operator $\mathcal L_{\tau}$ given in
  \eqref{1.toy007} in the Fourier domain with respect to $x'$.
  Observe that the terms in each product commute here.  We start by
  constructing a quasi-mode for $\mathcal M_{\tau}$, \ie, functions
  $u_{\pm}(\xi',x_{n})$ compactly supported in the $x_{n}$ variable
  and in a conic neighborhood of $\xi'_{0}$ in the variable $\xi'$
  with $\norm{\mathcal M_{\tau}u}_{L^2}\ll \norm{u}_{L^2}$, so that
  $u$ is nearly an eigenvector of $\mathcal M_{\tau}$ for the
  eigenvalue 0. 

  \medskip
  Condition~\ref{5.violation} implies that there exists $\tau_{0}>0$
  such that
 $$
 \frac{m_{-}(\xi_{0}')}{\alpha_{-}}<\tau_{0}<\frac{m_{+}(\xi'_{0})}{\alpha_{+}}
 \Longrightarrow
 \tau_{0}\alpha_{+}-m_{+}(\xi'_{0})<0<\tau_{0}\alpha_{-}-m_{-}(\xi'_{0}).
 $$
 By homogeneity we may in fact
  choose $(\tau_0, \xi'_0)$ such that $\tau_0^2 + |\xi'_0|^2=1$.
 We have thus, using the notation in \eqref{1.toy007},
 \begin{equation*}
   f_{+}(x_{n}=0)=\tau\alpha_{+}-m_{+}(\xi')<0<f_{-}(x_{n}=0)
   =\tau \alpha_{-}-m_{-}(\xi'),
  \end{equation*}
  for $(\tau, \xi')$ in a conic neighborhood $\Gamma$ of $(\tau_{0},
  \xi'_{0})$ in $\R\times\R^{n-1}$. 
  Let $\chi_1 \in \mooc(\R)$,
  $0\leq \chi_1\leq 1$, with $\chi_1 \equiv 1$ in a \nhd of $0$, such that 
  $\supp(\psi) \subset \Gamma$ with
  \begin{align*}
    \psi(\tau,\xi') = \chi_1\Big(\frac{\tau}{(\tau^2 +
      |\xi'|^2)^\hf} - \tau_0\Big) \chi_1\Big(\Big| \frac{\xi}{(\tau^2
      + |\xi'|^2)^\hf} - \xi_0 \Big|\Big).
  \end{align*}
  We thus have 
  \begin{align*}
    f_+ (x_{n}=0) \leq -C \tau, \quad  C'\tau \leq f_{-}(x_{n}=0) \quad \text{in } \supp(\psi).  
  \end{align*}
  
  Let $(\tau,\xi') \in \supp (\psi)$.  We can solve the equations
  \begin{align*}
    &\bigl(D_{n}+if_+(x_n,\xi'))\bigr) q_{+}=0
    \quad\text{on $\R_{+}$}, 
    \quad f_+(x_n,\xi') = \tau \varphi'(x_n) - m_+(\xi') 
    = f_{+}(0) +\tau\beta x_{n},\\
    &\bigl(D_{n}+if_-(x_n,\xi'))\bigr) q_{-}=0
    \quad\text{on $\R_{-}$}, 
    \quad f_-(x_n,\xi') = \tau \varphi'(x_n) - m_-(\xi')
    = f_{-}(0) +\tau\beta x_{n},\\
    &\bigl(D_{n}+ie_-(x_n,\xi'))\bigr) \tilde{q}_{-}=0
    \quad\text{on $\R_{-}$}, 
    \quad e_-(x_n,\xi') = \tau \varphi'(x_n) + m_-(\xi')
    = e_{-}(0) +\tau\beta x_{n},
  \end{align*}
that is
  \begin{align*}
    &q_{+}(\xi',x_{n})
    = Q_+(\xi',x_{n})q_{+}(\xi',0), 
    \quad Q_+(\xi',x_{n}) = e^{x_{n}\big(f_{+}(0)+\frac{\tau\beta x_{n}}{2} \big)},\\ 
    &q_{-}(\xi',x_{n})
    = Q_-(\xi',x_{n})q_{-}(\xi',0), 
    \quad Q_-(\xi',x_{n}) = e^{x_{n}\big(f_{-}(0)+\frac{\tau\beta x_{n}}{2} \big)},\\
    &\tilde{q}_{-}(\xi',x_{n})
    = \tilde{Q}_-(\xi',x_{n})\tilde{q}_{-}(\xi',0), 
    \quad \tilde{Q}_-(\xi',x_{n}) 
    = e^{x_{n}\big(e_{-}(0)+\frac{\tau\beta x_{n}}{2} \big)}.  
  \end{align*}
  Since $f_{+}(0)<0$ a solution of the form of $q_+$ is a good idea on
  $x_{n}\ge 0$ as long as $\tau\beta x_{n}+2f_{+}(0)\le 0$, \ie,
  $x_{n}\le 2\val{f_{+}(0)}/\tau\beta$.  Similarly as $f_-(0)>0$ (\resp
  $e_-(0)>0$) a solution of the form of $q_-$ (\resp $\tilde{q}_-$) is a
  good idea on $x_{n}\leq 0$ as long as $\tau\beta x_{n}+2f_{-}(0)\geq
  0$ (\resp $\tau\beta x_{n}+2e_{-}(0)\geq 0$).  To secure this we
  introduce a cut-off function $\chi_{0}\in \mooc((-1,1);[0,1])$,
  equal to 1 on $[-\hf,\hf]$ and for $\gamma \geq 1$ we define
  \begin{align}
  \label{eq: solution droite}
    u_{+}(\xi',x_{n}) &= Q_+ (\xi',x_{n}) \psi(\tau,\xi')
    \chi_{0}\Big(\frac{\tau\beta \gamma x_{n}}{\val{f_{+}(0)}}\Big),
  \end{align}
  and 
  \begin{align}
     \label{eq: solution gauche}
    u_{-}(\xi',x_{n})&=a Q_- (\xi',x_{n})\psi(\tau,\xi')
    \chi_{0}\Big(\frac{\tau\beta \gamma x_{n}}{f_{-}(0)}\Big)
    +b \tilde{Q}_- (\xi',x_{n})\psi(\tau,\xi')
    \chi_{0}\Big(\frac{\tau\beta \gamma x_{n}}{e_{-}(0)}\Big),  
  \end{align}
  with $a,b \in \R$, and 
  $$u(\xi', x_{n})=H_{+}(x_{n})u_{+}(\xi', x_{n})+
  H_{-}(x_{n})u_{-}(\xi', x_{n})
  $$ 
  The factor $\gamma$ is introduced to control the size of the support
  in the $x_n$ direction.
  Observe that we can satisfy the transmission condition~\eqref{eq:
    trans cond1}--\eqref{eq: trans cond2} by choosing the coefficients
  $a$ and $b$.
  Transmission condition~\eqref{eq: trans cond1}
  implies
  \begin{align}
  \label{eq: trans cond sec 5}
    a+b =1.
  \end{align}
  Transmission condition~\eqref{eq: trans cond2} and the equations
  satisfied by $Q_+$, $Q_-$ and $\tilde{Q}_-$ imply
  \begin{align}
   \label{eq: trans cond 2 sec 5}
    c_+ m_+ = c_- (a-b)m_-.
  \end{align}
  In particular note that $a-b\geq 0$ which gives $a \geq \hf$.

  We have the following lemma.
  %%%%%%%%%%%%%%%%%%%%%%%%
  % sub-lemma            %
  %%%%%%%%%%%%%%%%%%%%%%%% 
  \begin{lem}
    \label{sub-lemma: estimate quasi-mode}
    For $\tau$ sufficiently large we have
    \begin{align*}
      \norm{\mathcal M_{\tau} u}_{L^2(\R^{n-1}\times \R)}^2
      \leq C (\gamma^2 + \tau^{2})  \gamma \tau^{n-1} e^{-C' \tau/\gamma} 
    \end{align*}
    and
    \begin{align*}
     \norm{u}_{L^2(\R^{n-1}\times \R)}^2
      \geq C \tau^{n-2} \big(1-e^{-C' \tau /\gamma}\big).
    \end{align*}
  \end{lem}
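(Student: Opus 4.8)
The plan is to compute $\mathcal{M}_\tau u$ \emph{exactly}. Since $q_+$, $q_-$ and $\tilde{q}_-$ solve, respectively, $(D_n+if_+)q_+=0$, $(D_n+if_-)q_-=0$ and $(D_n+ie_-)\tilde{q}_-=0$, and since $\psi(\tau,\xi')$ does not depend on $x_n$, any $D_n$ falling on $Q_+,Q_-,\tilde{Q}_-$ is absorbed by the corresponding first-order factor; only the terms where $D_n$ hits a cut-off $\chi_0$ survive. Using $D_n Q_+ = -i\bigl(f_+(0)+\tau\beta x_n\bigr)Q_+$ and $e_+-f_+=2m_+$, one gets
\[
\mathcal{M}_\tau u_+ = c_n^+\,\psi\,Q_+\bigl(D_n^2\chi_0 + 2i\,m_+ D_n\chi_0\bigr),\qquad \chi_0=\chi_0\bigl(\tau\beta\gamma x_n/\val{f_+(0)}\bigr),
\]
and, using $D_n Q_-=-i\bigl(f_-(0)+\tau\beta x_n\bigr)Q_-$, $D_n\tilde{Q}_-=-i\bigl(e_-(0)+\tau\beta x_n\bigr)\tilde{Q}_-$ and the commutation of the two factors on the negative side,
\[
\mathcal{M}_\tau u_- = c_n^-\,\psi\Bigl(a\,Q_-\bigl(D_n^2\chi_0^{(f)} + 2i\,m_- D_n\chi_0^{(f)}\bigr) + b\,\tilde{Q}_-\bigl(D_n^2\chi_0^{(e)} - 2i\,m_- D_n\chi_0^{(e)}\bigr)\Bigr),
\]
where $\chi_0^{(f)}=\chi_0\bigl(\tau\beta\gamma x_n/f_-(0)\bigr)$, $\chi_0^{(e)}=\chi_0\bigl(\tau\beta\gamma x_n/e_-(0)\bigr)$, and $a=a(\xi')$, $b=b(\xi')$ are the coefficients fixed by \eqref{eq: trans cond sec 5}--\eqref{eq: trans cond 2 sec 5}. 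In particular $\mathcal{M}_\tau u$ is supported where one of the $\chi_0'$ or $\chi_0''$ is nonzero, i.e.\ in a thin $x_n$-slab of width $\sim 1/\gamma$.

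Second, I would record the size estimates on $\supp(\psi)$. Using the normalisation $\tau_0^2+\val{\xi_0'}^2=1$ and the strict inequalities $f_+(x_n=0)\le-C\tau$, $f_-(x_n=0)\ge C'\tau$ on $\supp(\psi)$ noted above \eqref{eq: solution droite}, homogeneity gives that $\val{\xi'}$, $m_\pm$, $\val{f_+(0)}$, $f_-(0)$ and $e_-(0)=\tau\alpha_-+m_-$ are all comparable to $\tau$ there. Hence $D_n\chi_0=O(\gamma)$, $D_n^2\chi_0=O(\gamma^2)$ and $m_\pm D_n\chi_0=O(\tau\gamma)$ on the slab, while on the slab the exponent $x_n\bigl(f_+(0)+\tfrac12\tau\beta x_n\bigr)$ of $Q_+$ is $\le -c\,\val{f_+(0)}^2/(\tau\beta\gamma)\le -c'\tau/\gamma$ (the quadratic term is harmless because $\gamma\ge1$), and similarly $Q_-,\tilde{Q}_-\le e^{-c'\tau/\gamma}$ on their slabs since $f_-(0),e_-(0)>0$. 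Thus the integrand of $\norm{\mathcal{M}_\tau u}^2$ is $\lesssim\val{\psi}^2(\gamma^2+\tau\gamma)^2e^{-2c'\tau/\gamma}$ on a set of the form $\{\xi'\in\supp(\psi)\}\times\{\text{slab of width}\sim1/\gamma\}$; integrating in $x_n$ (a factor $\sim1/\gamma$) and in $\xi'$ (a factor $\int\val{\psi}^2\,d\xi'\lesssim\tau^{n-1}$) and using $(\gamma^2+\tau\gamma)^2/\gamma\lesssim\gamma(\gamma^2+\tau^2)$ gives the first inequality.

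Third, for the lower bound it suffices to estimate $\norm{u_+}_{L^2}$ from below. On $0\le x_n\le\val{f_+(0)}/(2\tau\beta\gamma)$ one has $\chi_0\equiv1$ and $Q_+\ge e^{-x_n\val{f_+(0)}}$ (the quadratic term in the exponent is $\ge0$), so
\[
\int_0^{\val{f_+(0)}/(2\tau\beta\gamma)} Q_+^2\,dx_n \ge \frac{1-e^{-\val{f_+(0)}^2/(\tau\beta\gamma)}}{2\val{f_+(0)}}\gtrsim\frac1\tau\bigl(1-e^{-c'\tau/\gamma}\bigr),
\]
using again $C\tau\le\val{f_+(0)}\le C''\tau$ on $\supp(\psi)$. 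Since $\psi\equiv1$ on a subset of $\supp(\psi)$ of measure $\gtrsim\tau^{n-1}$ (for $\tau$ fixed), multiplying by $\int\val{\psi}^2\,d\xi'\gtrsim\tau^{n-1}$ yields $\norm{u}_{L^2}^2\ge\norm{u_+}_{L^2}^2\gtrsim\tau^{n-2}\bigl(1-e^{-c'\tau/\gamma}\bigr)$, the second inequality.

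The only delicate point is the uniform conic bookkeeping in the second paragraph: one must check that $m_\pm$, $\val{f_+(0)}$, $f_-(0)$, $e_-(0)$, the slab widths and the exponential rates are genuinely of order $\tau$ \emph{uniformly} over $\supp(\psi)$ --- this is exactly what the strict inequalities $f_+<0<f_-$ at $(\tau_0,\xi_0')$, consequences of \eqref{5.violation}, together with the conic localisation carried by $\psi$, guarantee --- and that $a(\xi')$, $b(\xi')$ remain bounded on $\supp(\psi)$, which is immediate from $a+b=1$ and $a-b=c_+m_+(\xi')/\bigl(c_-m_-(\xi')\bigr)$ being homogeneous of degree $0$. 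Everything else reduces to differentiating the explicit Gaussians $Q_\pm,\tilde{Q}_-$ and the cut-offs, which is routine.
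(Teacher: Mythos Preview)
Your proof is correct and follows essentially the same approach as the paper's: compute $\mathcal{M}_\tau u_\pm$ explicitly (only derivatives of the cut-offs survive), use the uniform scaling $m_\pm,\val{f_+(0)},f_-(0),e_-(0)\sim\tau$ on $\supp(\psi)$ to size everything, and estimate the Gaussian factors on the thin slabs. Your lower bound uses only $\norm{u_+}^2$, which suffices and is slightly more economical than the paper's treatment (which also computes $\norm{y_-}^2$ from below using $a\ge\tfrac12$); you also correctly keep track of the bounded coefficients $a(\xi'),b(\xi')$ in the expression for $\mathcal{M}_\tau u_-$.
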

  See Section~\ref{proof: estimate quasi-mode} for a proof.
  
  We now introduce
  \begin{align*}
    v_\pm (x',x_n) = (2\pi)^{-(n-1)}\chi_0(|\tau^\hf x'|) \check{\hat{u}}_\pm(x',x_n) 
    = (2\pi)^{-(n-1)} \chi_0(|\tau^\hf x'|) \hat{u}_\pm(-x',x_n), 
  \end{align*}
  that is, a localized version of the inverse Fourier
  transform (in $x'$) of $u_\pm$. The functions $v_\pm$ are smooth and compactly
  supported in $\R^{n-1}_\pm\times \R$ and they satisfy  transmission
  conditions~\eqref{eq: trans cond1}--\eqref{eq: trans cond2}. We set
  $v(x', x_{n})=H_{+}(x_{n})v_{+}(x', x_{n})+ H_{-}(x_{n})v_{-}(x',
  x_{n})$. In fact we have the following estimates.
  %%%%%%%%%%%%%%%%%%%%%%%%
  % sub-lemma            %
  %%%%%%%%%%%%%%%%%%%%%%%% 
  \begin{lem}
    \label{sub-lemma: estimate quasi-mode 2}
    Let $N \in \N$. For $\tau$ sufficiently large we have
    \begin{align*}
      \norm{\mathcal L_{\tau} v}_{L^2(\R^{n-1}\times \R)}^2
      \leq C (\gamma^2 + \tau^{2})  \gamma \tau^{n-1} e^{-C' \tau/\gamma} 
      + C_{\gamma,N} \tau^{-N}
    \end{align*}
    and
    \begin{align*}
     \norm{v}_{L^2(\R^{n-1}\times \R)}^2
      \geq C \tau^{n-2} \big(1-e^{-C' \tau /\gamma}\big) - C_{\gamma,N} \tau^{-N}.
    \end{align*}
  \end{lem}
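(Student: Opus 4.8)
We compare $v = H_{+} v_{+} + H_{-} v_{-}$ with the un-truncated function $v^0 = H_{+} v^0_{+} + H_{-} v^0_{-}$, where $v^0_{\pm}(x',x_{n}) = (2\pi)^{-(n-1)} \hat{u}_{\pm}(-x',x_{n})$ is the full inverse Fourier transform in $x'$ of $u_{\pm}$, so that $v_{\pm} = \chi_{0}(|\tau^{\hf} x'|)\, v^0_{\pm}$. The functions $v^0_{\pm}$ are smooth, Schwartz in $x'$ and compactly supported in $x_{n}$, and, since the coefficients $a$, $b$ in \eqref{eq: solution gauche} were chosen precisely so that $u$ satisfies \eqref{eq: trans cond1}--\eqref{eq: trans cond2} in the Fourier variable $\xi'$, the function $v^0$ satisfies \eqref{eq: trans cond1}--\eqref{eq: trans cond2} as well. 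As the coefficients of $\L_{0}$ and the weight $\varphi$ do not depend on $x'$, the operator $\L_{\tau}$ acts on functions fulfilling the transmission conditions as the Fourier multiplier $\mathcal M_{\tau}$ of \eqref{eq: definition Mtau} in the $x'$ variable; hence $\L_{\tau} v^0 = \mathcal F^{-1}_{x'}(\mathcal M_{\tau} u)$ and the Plancherel theorem gives $\norm{v^0}_{L^2} = (2\pi)^{-(n-1)/2}\norm{u}_{L^2}$ and $\norm{\L_{\tau} v^0}_{L^2} = (2\pi)^{-(n-1)/2}\norm{\mathcal M_{\tau} u}_{L^2}$. Consequently Lemma~\ref{sub-lemma: estimate quasi-mode} yields exactly the two announced bounds with $v^0$ in place of $v$.

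It thus remains to prove that the truncation error is negligible: for every $N\in\N$,
\[
\norm{v - v^0}_{L^2(\R^{n-1}\times\R)}^2 + \norm{\L_{\tau}(v - v^0)}_{L^2(\R^{n-1}\times\R)}^2 \le C_{\gamma, N}\, \tau^{-N}.
\]
Granting this, the triangular inequality together with $\norm{v^0}_{L^2}, \norm{\L_{\tau} v^0}_{L^2} \lesssim \tau^{(n-1)/2}$ (a bound that only costs a fixed power of $\tau$, harmlessly absorbed into the correction term by increasing $N$) turns the bounds for $v^0$ into the bounds for $v$ claimed in the lemma.

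To estimate $\norm{v - v^0}_{L^2} = \norm{(1 - \chi_{0}(|\tau^{\hf} x'|)) v^0}_{L^2}$ I would split according to the size of $x_{n}$. On $\supp(\psi)$ one has $|f_{\pm}(0)|, e_{-}(0) \gtrsim \tau$, so from the explicit exponentials $Q_{\pm}$, $\tilde{Q}_{-}$ in \eqref{eq: solution droite}--\eqref{eq: solution gauche} one gets, for $\gamma \ge 1$ and $|x_{n}|$ small, the pointwise bound $|Q_{\pm}(\xi',x_{n})|, |\tilde{Q}_{-}(\xi',x_{n})| \le e^{-c \tau |x_{n}|}$ on the support of the corresponding cut-off; hence the part of $u_{\pm}$ (and so, by Plancherel, of $v^0$) with $|x_{n}| \ge \tau^{-\hf}$ has $L^2$ norm $O(e^{-c\tau^{1/2}}) = O(\tau^{-\infty})$. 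On the complementary region $|x_{n}| \le \tau^{-\hf}$, differentiating $Q_{\pm} = e^{x_{n}(f_{\pm}(0) + \tau\beta x_{n}^2/2)}$ in $\xi'$ brings down a factor $x_{n}\,\partial_{\xi'}^{\beta} m_{\pm} = O(\tau^{-\hf}\valjp{\xi'}^{1 - |\beta|})$, while $\partial_{\xi'}^{\beta}\psi = O(\tau^{-|\beta|})$ and $\partial_{\xi'}^{\beta}\chi_{0}(\tau\beta\gamma x_{n}/|f_{\pm}(0)|) = O_{\gamma}(\tau^{-|\beta|})$; therefore $|\partial_{\xi'}^{\beta} u_{\pm}(\xi',x_{n})| \le C_{\beta,\gamma}\, \tau^{-|\beta|/2}\, \mathbf{1}_{\supp(\psi)}(\xi')\, |Q_{\pm}(\xi',x_{n})|$. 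Since $\supp(\psi)$ intersected with $\{\tau \text{ fixed}\}$ has measure $\lesssim \tau^{n-1}$, the non-stationary phase identity $|x'|^{2N} v^0_{\pm}(x',x_{n}) = (2\pi)^{-(n-1)}\int e^{i x'\cdot\xi'}(-\Delta_{\xi'})^{N} u_{\pm}(\xi',x_{n})\, d\xi'$ gives $|v^0_{\pm}(x',x_{n})| \le C_{N,\gamma}\, \tau^{n-1}\, (\tau^{\hf}|x'|)^{-2N}\, e^{-c\tau|x_{n}|}$; integrating over $\{|x'| \ge \tfrac12 \tau^{-\hf}\}$ and over $x_{n}$ shows this contribution is $O(\tau^{-M})$ for every $M$, for $N$ large. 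Combining the two regimes gives $\norm{v - v^0}_{L^2} = O(\tau^{-N})$.

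For $\norm{\L_{\tau}(v - v^0)}_{L^2}$ I would expand $\L_{\tau}\big((1 - \chi_{0}(|\tau^{\hf}x'|)) v^0\big)$ by the Leibniz rule: it equals $(1 - \chi_{0}(|\tau^{\hf}x'|))\, \L_{\tau} v^0$ plus commutator terms in which at least one tangential derivative falls on $\chi_{0}(|\tau^{\hf}x'|)$, hence carries a factor $\tau^{\hf}$ (or $\tau$) times $\chi_{0}'$ or $\chi_{0}''$ evaluated at $\tau^{\hf}x'$, supported in $\{|x'| \sim \tau^{-\hf}\}$. The first term is treated exactly as above because $\L_{\tau} v^0 = \mathcal F^{-1}_{x'}(\mathcal M_{\tau} u)$ and $\mathcal M_{\tau} u$ obeys the same pointwise and $\xi'$-smoothness estimates as $u$ up to an extra polynomial factor $\lambda^2 \lesssim \tau^2$; the commutator terms are controlled because, on $\{|x'| \sim \tau^{-\hf}\}$, the functions $v^0_{\pm}$ and $D_{n} v^0_{\pm}$ are $O(\tau^{-M})$ by the same non-stationary-phase estimate. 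This proves the truncation estimate, and hence the lemma. The main difficulty is the second regime above: showing that, near the interface, the quasi-mode constructed in the Fourier variables has its $x'$-frequency content at scale $\sim \tau$ but with a smoothness scale that gains a factor $\tau^{\hf}$ from the $x_{n}$-localization — so that truncating $v$ at the spatial scale $\tau^{-\hf}$ removes only an $O(\tau^{-\infty})$ tail — while the exponential $x_{n}$-decay of the profiles $Q_{\pm}$, $\tilde{Q}_{-}$ makes the far-from-interface regime harmless.
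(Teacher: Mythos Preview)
Your overall strategy coincides with the paper's: introduce the un-truncated inverse Fourier transform $v^0$, invoke Plancherel and Lemma~\ref{sub-lemma: estimate quasi-mode} for $v^0$, and then show that the tail $(1-\chi_0(|\tau^{1/2}x'|))v^0_\pm$ is $O_{\gamma}(\tau^{-\infty})$ in $L^2$, together with a Leibniz expansion for $\mathcal L_\tau(v-v^0)$. The gap is in the oscillatory-integral estimate you use for the tail.

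In the regime $|x_n|\le \tau^{-1/2}$ you integrate by parts against the bare phase $e^{ix'\cdot\xi'}$ and let all derivatives fall on $u_\pm$. Since $\partial_{\xi'}Q_\pm=-x_n(\partial_{\xi'}m_\pm)Q_\pm$ and $\partial_{\xi'}m_\pm=O(1)$ on $\supp\psi$, each $\xi'$-derivative gains only a factor $|x_n|=O(\tau^{-1/2})$, so your pointwise bound is $|v^0_\pm|\lesssim \tau^{n-1}(\tau^{1/2}|x'|)^{-2N}e^{-c\tau|x_n|}$. But then
\[
\int_{|x'|\ge \frac12\tau^{-1/2}}(\tau^{1/2}|x'|)^{-4N}\,dx'
=\tau^{-(n-1)/2}\!\int_{|y|\ge 1/2}|y|^{-4N}\,dy
=C\,\tau^{-(n-1)/2},
\]
and combining with $\tau^{2(n-1)}$ and $\int e^{-2c\tau|x_n|}\,dx_n\lesssim\tau^{-1}$ you get $\tau^{3(n-1)/2-1}$, which is a \emph{fixed positive} power of $\tau$ for $n\ge 2$: the $\tau^{-N}$ gain from derivatives exactly cancels against the $\tau^{N}$ loss coming from the integration over $|x'|\ge \tau^{-1/2}$, so taking $N$ larger does not help. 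The same defect propagates to your commutator bound for $\mathcal L_\tau(v-v^0)$. No choice of splitting threshold in $x_n$ repairs this: a smaller threshold (say $|x_n|\le \tau^{-1}$) would yield the needed $(\tau|x'|)^{-2N}$, but then the complementary region only carries $e^{-c}$, not $e^{-c\tau^{1/2}}$.

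The remedy is the one in the paper: absorb $Q_\pm$ into the phase. After the rescaling $\xi'=\tau\eta$ the integrand is $e^{i\tau\phi}$ with $\phi=-x'\cdot\eta-ix_n\tilde f_\pm(\eta)$, and $\partial_{\eta_1}\phi=-x_1-ix_n\partial_{\eta_1}m_\pm(\eta)$ has modulus $\ge |x_1|$ for \emph{all} $x_n$ (the real and imaginary parts are orthogonal). One integration by parts with $L=\tau^{-1}(\partial_{\eta_1}\phi)^{-1}\partial_{\eta_1}$ therefore gains $(\tau|x_1|)^{-1}$, not $(\tau^{1/2}|x_1|)^{-1}$, and the resulting bound $|\hat u_\pm|\lesssim \tau^{n-1}\gamma^{N}(\tau|x'|)^{-N}e^{-C\tau|x_n|}$ integrates over $\{|x'|\ge \tfrac12\tau^{-1/2}\}$ to $\tau^{3(n-1)/2-N-1}$, which \emph{does} decay for $N$ large. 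No splitting in $x_n$ is then needed.
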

See Section~\ref{proof: estimate quasi-mode 2} for a proof.

We may now conclude the proof of Theorem~\ref{theorem: sharp condition
  on weight}. In fact, if $V$ is an arbitrary neighborhood of the
origin, we choose $\tau$ and $\gamma$ sufficiently large so that
$\supp(v) \subset V$. We then keep $\gamma$ fixed.  The estimates of
Lemma~\ref{sub-lemma: estimate quasi-mode 2} show that
\begin{align*}
  \norm{\mathcal L_{\tau} v}_{L^2(\R^{n-1}\times \R)} 
  \norm{v}_{L^2(\R^{n-1}\times \R)}^{-1} \ \ \mathop{\longrightarrow}_{\tau \to \infty} 
  \  \ 0.
\end{align*} 

%%%%%%%%%%%%%%%%%%%%%%%%
% remark               %
%%%%%%%%%%%%%%%%%%%%%%%%
\begin{rem}
  As opposed to the analogy we give at the beginning of
  Section~\ref{sec: explaining assumption}, the construction of this
  quasi-mode does not simply rely on one of the first-order factor.
  The transmission conditions are responsible for this fact.  The
  construction relies on the factor $D_n + i f_+$ in $x_n \geq
  0$, \ie, a one-dimensional space of solutions (see \eqref{eq:
  solution droite}), and on both factors $D_n + i f_-$ and $D_n + i
  e_-$ in $x_n \geq 0$, \ie, a two-dimensional space of solutions
  (see \eqref{eq: solution gauche}). See also \eqref{eq: trans cond sec 5} and \eqref{eq: trans cond 2 sec 5}.
\end{rem}

%% Appendices
\section{Appendix}
% A few facts on pseudo-differential operators

\subsection{A few facts on pseudo-differential operators}\label{app.sub.pseudo}

\subsubsection{Standard classes and Weyl quantization}
We define for $m\in \R$ the class of tangential symbols $\S^m$ as the smooth
functions on $\R^n\times\R^{n-1}$ such that, for all $(\alpha,
\beta)\in \N^n\times \N^{n-1}$,
\begin{equation}\label{004app}N_{\alpha\beta}(a)=
\sup_{(x,\xi')\in \R^n\times \R^{n-1}} \valjp{\xi'}^{-m+\val\beta} 
\val{(\p_{x}^\alpha\p_{\xi'}^\beta a)(x,\xi')}<\io,
\end{equation}
with $\valjp{\xi'}^2 = 1 + |\xi'|^2$.
The quantities on the \lhs above are called the semi-norms of the
symbol $a$.  For $a\in \S^m$, we define $ \op{a} $ as the operator
defined on $\mathscr S(\R^n)$ by
\begin{equation}
  \label{eq: def psido}
  (\op{a} u)(x',x_{n})= a(x,D') u (x',x_{n}) 
  =\int\limits_{\R^{n-1}} e^{i x'\cdot \xi'}
  a(x',x_{n},\xi')\hat u(\xi',x_{n}) d\xi'(2\pi)^{1-n},
\end{equation}
with $(x',x_{n})\in \R^{n-1}\times\R$, where $\hat u$ is the partial
Fourier transform of $u$ with respect to the variable
$x'$. 
For all $(k,s)\in \Z\times \R$ we have
\begin{equation}\label{5.quanti}
  \op{a}: H^k(\R_{x_{n}}; H^{s+m}(\R_{x'}^{n-1})) \to
  H^k(\R_{x_{n}}; H^{s}(\R_{x'}^{n-1}))\quad\text{continuously},
\end{equation}
and the norm of this mapping depends only on $\{N_{\alpha\beta}(a)\}_{\val\alpha+\val \beta\le \mu(k,s,m,n)}$,
where $\mu:\Z\times\R\times\R\times \N\rightarrow \N.$

We shall also use the {\em Weyl quantization}
of $a$ denoted by $\opw{a}$ and given by the formula
\begin{align}
  \label{eq: weyl quantization}
  (\opw{a} u)(x',x_{n})&=a^w(x,D') u(x',x_{n})\\
  &= \mathop{\iint}_{\R^{2n-2}} e^{i (x'-y')\cdot \xi'}
  a\big(\frac{x'+y'}{2},x_{n},\xi'\big)
  u(y',x_{n}) dy'd\xi'(2\pi)^{1-n}. \nonumber
\end{align}
Property \eqref{5.quanti} holds as well for $\opw{a}$.  A nice feature
of the Weyl quantization that we use in this article is the simple
relationship with adjoint operators with the formula
\begin{equation}
\label{eq: weyl adjoint}
\bigl(\opw{a}\bigr)^*=\opw{\bar a},
\end{equation}
so that for a real-valued symbol $a\in \S^m$ 
$(\opw{a})^*=\opw{a}$.
We have also for $a_{j}\in \S^{m_{j}}, j=1,2,$
\begin{align}
\opw{a_{1}}\opw{a_{2}}&=\opw{a_{1}\sharp a_{2}},
\quad a_{1}\sharp a_{2} \in \S^{m_{1}+m_{2}},
\end{align}
with, for any $N \in \N$, 
\begin{align}
  \label{eq: composition asymptotic series}
 \big(a_{1}\sharp a_{2}\big)(x,\xi) 
 - \ssum_{j < N} \big( i \sigma(D_{x'}, D_{\xi'}; D_{y'}, D_{\eta'})/2)^j
 a_1(x,\xi) a_2(y,\eta)/j!\big|_{(y,\eta) = (x,\xi)} \in \S^{m-N}, 
\end{align}
where $\sigma$ is the symplectic two-form, \ie, $\sigma (x,\xi;y, \eta) 
= y \cdot \xi - x \cdot \eta$.
In particular,
\begin{align}
\opw{a_{1}}\opw{a_{2}}&=\opw{a_{1}a_{2}}+\opw{r_{1}},
\quad r_{1}\in \S^{m_{1}+m_{2}-1},\\
\text{with }r_{1}&=\frac{1}{2i}\poi{a_{1}}{a_{2}}+r_{2},
\quad r_{2}\in\S^{m_{1}+m_{2}-2},\\
[\opw{a_{1}},\opw{a_{2}}]&=\opw{\frac{1}{i}\poi{a_{1}}{a_{2}}}+\opw{r_{3}},
\quad r_{3}\in \S^{m_{1}+m_{2}-3},
\end{align}
where $\poi{a_{1}}{a_{2}}$ is the Poisson bracket.
Moreover, for $b_{j}\in \S^{m_{j}}, j=1,2,$ both real-valued, we have
\begin{align}\label{5.morcom}
[\opw{b_{1}},i\opw{b_{2}}]&=\opw{\poi{b_{1}}{b_{2}}}+\opw{s_{3}},\quad s_{3}
\text{ real-valued}\in \S^{m_{1}+m_{2}-3}.
\end{align}
%%%%%%%%%%%%%%%%%%%%%%%%
% lemma                %
%%%%%%%%%%%%%%%%%%%%%%%%
\begin{lem}\label{5.lem.gaarde}
  Let $a\in \S^1$ such that  $a(x,\xi')\geq \mu
    \valjp{\xi'}$, with $\mu\geq 0$. Then there exists $C>0$ such that
  \begin{align*}
    \opw{a}+C\ge \mu\valjp{D'},\qquad (\opw{a})^2+C\ge  \mu^2\valjp{D'}^2.
  \end{align*}
\end{lem}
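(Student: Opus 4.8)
The final statement is Lemma~\ref{5.lem.gaarde}, a sharp Gårding-type inequality: for $a \in \S^1$ with $a(x,\xi') \geq \mu \valjp{\xi'}$, one has $\opw{a} + C \geq \mu \valjp{D'}$ and $(\opw{a})^2 + C \geq \mu^2 \valjp{D'}^2$ for some $C > 0$.

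\begin{proof}[Proof plan]
The plan is to reduce the first inequality to the standard sharp Gårding inequality applied to a nonnegative symbol, and then to bootstrap the second inequality from the first using the symbolic calculus recalled above. For the first assertion, I would consider the symbol $b(x,\xi') = a(x,\xi') - \mu \valjp{\xi'}$, which belongs to $\S^1$ since $\valjp{\xi'} \in \S^1$, and which satisfies $b(x,\xi') \geq 0$ everywhere. The sharp Gårding inequality (valid for nonnegative symbols in $\S^1$, applied with the Weyl quantization, for which it takes a particularly clean form) gives $\opw{b} \geq -C_0$ for some $C_0 > 0$, i.e.\ $\poscal{\opw{b} u}{u} \geq -C_0 \norm{u}^2_{L^2}$. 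Since $\opw{a} = \opw{b} + \mu \opw{\valjp{\xi'}}$ and $\opw{\valjp{\xi'}} = \valjp{D'}$ (a Fourier multiplier, hence its Weyl and standard quantizations coincide), this yields $\opw{a} + C_0 \geq \mu \valjp{D'}$, which is the first inequality with $C = C_0$.

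For the second inequality, I would argue as follows. Since $a$ is real-valued, $\opw{a}$ is self-adjoint by \eqref{eq: weyl adjoint}, so $(\opw{a})^2 = (\opw{a})^\ast \opw{a} \geq 0$. By the composition formula \eqref{eq: composition asymptotic series}, $(\opw{a})^2 = \opw{a \sharp a}$ with $a \sharp a = a^2 + r$, $r \in \S^0$ (the first-order term in the expansion vanishes because $\poi{a}{a} = 0$). Similarly $\valjp{D'}^2 = \opw{\valjp{\xi'}^2}$. Now $a^2 - \mu^2 \valjp{\xi'}^2 = (a - \mu\valjp{\xi'})(a + \mu\valjp{\xi'}) = b(a + \mu \valjp{\xi'})$, a product of a nonnegative symbol in $\S^1$ with a symbol in $\S^1$ that is $\geq 2\mu\valjp{\xi'} \geq 0$; hence $a^2 - \mu^2\valjp{\xi'}^2$ is a nonnegative symbol in $\S^2$. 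Applying the sharp Gårding inequality in $\S^2$ to this symbol gives $\opw{a^2 - \mu^2\valjp{\xi'}^2} \geq -C_1 \valjp{D'}$ for some $C_1 > 0$ (the error in $\S^2$ sharp Gårding is controlled in the $\H^{1/2}$-type norm, i.e.\ by $\opw{\valjp{\xi'}}$ up to a constant). Combining, $(\opw{a})^2 = \opw{a^2} + \opw{r'}$ with $r' \in \S^1$, so $(\opw{a})^2 \geq \mu^2 \valjp{D'}^2 - C_1 \valjp{D'} - \norm{\opw{r'}}$; since $C_1\valjp{D'} + \opw{r'}$ is a first-order operator it is bounded below by $-C_2 \valjp{D'} \geq -C_2(\varepsilon \valjp{D'}^2 + \varepsilon^{-1})$, but more simply one absorbs the lower-order terms: $\mu^2\valjp{D'}^2 - C_1 \valjp{D'} - C_2 \geq -C$ relative to... rather, one writes $(\opw a)^2 + C \geq \mu^2 \valjp{D'}^2 - (C_1 \valjp{D'} + \opw{r'} - C)$ and notes the operator in parentheses, being in $\mathrm{op}(\S^1)$ shifted by a constant, satisfies $C_1\valjp{D'} + \opw{r'} \leq \frac12 \mu^2 \valjp{D'}^2 + C'$ by Young's inequality at the symbol level together with $L^2$-boundedness of $\S^0$ operators; choosing $C$ accordingly gives $(\opw a)^2 + C \geq \mu^2 \valjp{D'}^2$ after renaming constants (the factor $\frac12$ is harmless, or one restarts from $a^2 - \mu^2 \valjp{\xi'}^2 \geq 0$ directly without the split).

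The main obstacle here is purely bookkeeping: ensuring that the error terms arising from the sharp Gårding inequality in $\S^2$ and from the composition $a\sharp a = a^2 + \S^0$ are genuinely of lower order than $\mu^2\valjp{D'}^2$ and can be absorbed into the additive constant $C$ via $L^2$-boundedness and Young's inequality. No deep difficulty arises because the positivity is used only qualitatively (through sharp Gårding) and the quantization is Weyl, which keeps adjoints and the vanishing of $\poi{a}{a}$ transparent. If one prefers to avoid $\S^2$ sharp Gårding altogether, an alternative is to write $(\opw a)^2 - \mu^2 \valjp{D'}^2 = \opw{(a-\mu\valjp{\xi'})} \, \opw{(a + \mu \valjp{\xi'})} + (\text{lower order})$ and combine the first inequality $\opw a + C_0 \geq \mu\valjp{D'}$ with the boundedness below of $\opw{a} + \mu\valjp{D'}$, but the $\S^2$-Gårding route is cleaner.
\end{proof}
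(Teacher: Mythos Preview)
Your proof of the first inequality is correct and identical to the paper's: apply the sharp G{\aa}rding inequality to the nonnegative first-order symbol $b = a - \mu\valjp{\xi'}$.

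For the second inequality, however, there is a genuine gap. You correctly observe that $(\opw{a})^2 = \opw{a^2} + \opw{r}$ with $r \in \S^0$ (since $\poi{a}{a}=0$), and that $a^2 - \mu^2\valjp{\xi'}^2 \geq 0$ in $\S^2$. But applying the \emph{sharp G{\aa}rding} inequality to a nonnegative symbol in $\S^2$ only yields $\opw{a^2 - \mu^2\valjp{\xi'}^2} \geq -C_1\valjp{D'}$, i.e.\ a first-order error. This leaves you with $(\opw{a})^2 \geq \mu^2\valjp{D'}^2 - C_1\valjp{D'} - C_2$, and no constant $C$ can satisfy $C - C_2 \geq C_1\valjp{D'}$ since $\valjp{D'}$ is unbounded. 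Your Young-inequality patch $C_1\valjp{D'} \leq \tfrac12\mu^2\valjp{D'}^2 + C'$ does absorb the error, but at the cost of the leading coefficient: you end up with $(\opw{a})^2 + C \geq \tfrac12\mu^2\valjp{D'}^2$, not $\mu^2\valjp{D'}^2$. The remark that ``the factor $\tfrac12$ is harmless'' is precisely where the argument fails.

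The paper closes this gap by invoking the \emph{Fefferman--Phong inequality} instead: for a nonnegative symbol $b \in \S^2$ one has $\opw{b} \geq -C$ with $C$ a genuine constant (two derivatives gained rather than one). Applied to $a^2 - \mu^2\valjp{\xi'}^2$ this gives $\opw{a^2} \geq \mu^2\valjp{D'}^2 - C$, and since $\opw{r}$ with $r\in\S^0$ is $L^2$-bounded, the conclusion $(\opw{a})^2 + C' \geq \mu^2\valjp{D'}^2$ follows immediately with the exact constant $\mu^2$.
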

%%%% proof of lemma
\begin{proof}
  The first statement follows from the sharp G{\aa}rding
  inequality~\cite[Chap. 18.1 and 18.5]{Hoermander:V3} applied to the
  nonnegative first-order symbol $a(x,\xi')-\mu\valjp{\xi'}$; moreover
  $(\opw{a})^2=\opw{a^2}+\opw{r}$ with $r\in \S^0$, so that the
  Fefferman-Phong inequality~\cite[Chap. 18.5]{Hoermander:V3} applied
  to the second-order $a^2-\mu^2\valjp{\xi'}^2$ implies the result.
\end{proof}

\subsubsection{Semi-classsical pseudo-differential calculus with a large parameter}
\label{section: semi-classical calculus}
We let $\tau \in \R$ be such that $\tau \geq \tau_0 \geq 1$. We set
$\lambda^2 = 1 + \tau^2 + |\xi'|^2$.  We define for $m\in \R$ the
class of symbols $\ST^m$ as the smooth functions on
$\R^n\times\R^{n-1}$, depending on the parameter $\tau$, such that,
for all $(\alpha, \beta)\in \N^n\times \N^{n-1}$,
\begin{equation}
  \label{eq: seminorm tau}
  N_{\alpha\beta}(a)=
  \sup_{{(x,\xi')\in \R^n\times \R^{n-1}} \atop {\tau \geq \tau_0}} 
  \lambda^{-m+\val\beta} 
  \val{(\p_{x}^\alpha\p_{\xi'}^\beta a)(x,\xi',\tau)}<\io.
\end{equation}
Note that $\ST^0 \subset \S^0$.
The associated operators are defined by \eqref{eq: def psido}.  
We can introduce Sobolev spaces and Sobolev norms which are adapted to
the scaling large parameter $\tau$.   Let $s \in
\R$; we set
\begin{equation*}
   \norm{u}_{\H^s} := \norm{\Lambda^s u}_{L^2(\R^{n-1})}, \quad \text{with}\ 
   \Lambda^s := \op{\lambda^s}
\end{equation*}
and
\begin{equation*}
  \H^s= \H^s(\R^{n-1}) := \{ u \in {\mathscr S}'(\R^{n-1});\ 
  \norm{u}_{\H^s}< \infty\}.  
\end{equation*}
The space $\H^s$ is algebraically equal to the classical Sobolev space
$H^s(\R^{n-1})$, which norm is denoted by $\norm{.}_{H^s}$.  For $s
\geq 0$ note that we have
\begin{equation*}
  \norm{u}_{\H^s} \sim \tau^s \norm{u}_{L^2(\R^{n-1})} 
  + \norm{\valjp{D'}^s u}_{L^2(\R^{n-1})}.
\end{equation*}
 If $a \in \ST^m$ then, for all $(k,s)\in \Z\times \R$, we have
\begin{equation}
  \op{a}: H^k(\R_{x_{n}}; \H^{s+m}) \to
  H^k(\R_{x_{n}}; \H^{s}(\R_{x'}^{n-1}))\quad\text{continuously},
\end{equation}
and the norm of this mapping depends only on $\{N_{\alpha\beta}(a)\}_{\val\alpha+\val \beta\le \mu(k,s,m,n)}$,
where $\mu:\Z\times\R\times\R\times \N\rightarrow \N.$

For the calculus with a large parameter we shall also use the Weyl
quantization of \eqref{eq: weyl quantization}. All the formul\ae\xspace listed
in \eqref{eq: weyl adjoint}--\eqref{5.morcom} hold as well, with
$\S^m$ everywhere replaced by $\ST^m$. We shall often use the
G{\aa}rding inequality as stated in the following lemma.
%%%%%%%%%%%%%%%%%%%%%%%%
% lemma                %
%%%%%%%%%%%%%%%%%%%%%%%%
\begin{lem}
  \label{lem: postivity garding tau}
  Let $a\in \ST^m$ such that $\re a \geq C \lambda^m$. Then 
  \begin{align*}
    \re(\opw{a} u, u) \gtrsim \norm{u}_{L^2(\R; \H^{\frac{m}{2}})}^2,
  \end{align*}
  for $\tau$ \suff large.
\end{lem}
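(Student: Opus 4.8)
The plan is to reduce the estimate to the sharp G{\aa}rding inequality in the calculus with a large parameter. Since all the pseudodifferential operators here act only in the tangential variables $x'$, it suffices to prove the inequality for each fixed $x_n$, working with $u\in\mathscr S(\R^{n-1})$ and the $L^2(\R^{n-1})$ inner product, and then integrate in $x_n$. First I would dispose of the imaginary part of $a$: by the Weyl adjoint formula \eqref{eq: weyl adjoint} we have $(\opw{a})^\ast=\opw{\bar a}$, hence
\[
\re(\opw{a}\,u,u)=\tfrac12\bigl((\opw{a}\,u,u)+((\opw{a})^\ast u,u)\bigr)=(\opw{\re a}\,u,u),
\]
so one may assume from the start that $a\in\ST^m$ is real-valued with $a\geq C\lambda^m$.

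Next I would peel off an elliptic piece. Write $a=c+\tfrac{C}{2}\lambda^m$ with $c:=a-\tfrac{C}{2}\lambda^m\in\ST^m$, so that $c\geq\tfrac{C}{2}\lambda^m\geq0$. Since $\lambda^m$ depends on $(\tau,\xi')$ only, its Weyl and standard quantizations both coincide with $\Lambda^m$, and $(\Lambda^m u,u)=\norm{\Lambda^{m/2}u}_{L^2}^2=\norm{u}_{\H^{m/2}}^2$. To the nonnegative symbol $c\in\ST^m$ I would apply the sharp G{\aa}rding inequality for the large-parameter calculus — which follows from the standard one (used in Lemma~\ref{5.lem.gaarde}) after the rescaling $\tau=h^{-1}$ — to obtain $(\opw{c}\,u,u)\geq-C_1\norm{u}_{\H^{(m-1)/2}}^2$ for $\tau\geq\tau_0$. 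Adding the two contributions,
\[
(\opw{a}\,u,u)\ \geq\ \tfrac{C}{2}\,\norm{u}_{\H^{m/2}}^2-C_1\,\norm{u}_{\H^{(m-1)/2}}^2.
\]

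To conclude I would absorb the error term using that $\lambda\geq\tau$ throughout this calculus. Writing $\Lambda^{(m-1)/2}=\Lambda^{-1/2}\Lambda^{m/2}$ and noting that $\lambda^{-1/2}$ is a Fourier multiplier of $L^2(\R^{n-1})$-norm at most $\tau^{-1/2}$, one gets $\norm{u}_{\H^{(m-1)/2}}^2\leq\tau^{-1}\norm{u}_{\H^{m/2}}^2$, whence
\[
(\opw{a}\,u,u)\ \geq\ \bigl(\tfrac{C}{2}-C_1\tau^{-1}\bigr)\norm{u}_{\H^{m/2}}^2\ \geq\ \tfrac{C}{4}\,\norm{u}_{\H^{m/2}}^2
\]
for $\tau\geq\tau_1:=\max(\tau_0,4C_1/C)$; integrating in $x_n$ then gives the stated estimate.

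I do not expect a genuine obstacle here; the one point that needs care is that the \emph{gain of one power of $\lambda$} in sharp G{\aa}rding must be used — i.e. the error must be controlled in $\H^{(m-1)/2}$ rather than merely in $\H^{m/2}$ — since it is precisely this gain, combined with $\lambda\gtrsim\tau$, that makes the error negligible as $\tau\to\infty$. An entirely equivalent alternative would be to conjugate first by $\Lambda^{-m/2}$, reducing to the case $m=0$, and then invoke the order-zero G{\aa}rding inequality.
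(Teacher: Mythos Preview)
Your proof is correct and follows essentially the same approach as the paper: apply the sharp G{\aa}rding inequality to the nonnegative symbol obtained by subtracting $C\lambda^m$ (or $\tfrac{C}{2}\lambda^m$, as you do) from $\re a$, and then absorb the lower-order remainder using $\lambda\gtrsim\tau$. The paper's proof is a one-liner invoking sharp G{\aa}rding on $a-C\lambda^m$; your version spells out the reduction to real symbols, the splitting, and the absorption step explicitly, which is all fine.
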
 
%%%% proof of lemma
\begin{proof}
  The proof follows from the Sharp G{\aa}rding
  inequality~\cite[Chap. 18.1 and 18.5]{Hoermander:V3} applied to the
  nonnegative symbol $a - C \lambda^m$.
\end{proof}

For technical reasons we shall often need the following result.
%%%%%%%%%%%%%%%%%%%%%%%%
% lemma                %
%%%%%%%%%%%%%%%%%%%%%%%%
\begin{lem}
  \label{lemma: mixed composition}
  Let $m,m' \in \R$ and $a_1(x,\xi') \in \S^m$ and $a_2(x,\xi', \tau)
  \in \ST^{m'}$ such that the essential support of $a_2$
  is contained in a  region where $\valjp{\xi'} \gtrsim \tau$. Then 
  $$\opw{a_1}\opw{a_2} = \opw{b_1}, \quad \opw{a_2} \opw{a_1} = \opw{b_2},$$
  with $b_1,b_2 \in \ST^{m+m'}$. Moreover the 
  asympotic series of \eqref{eq: composition asymptotic series}
  is also valid for these cases (with $\S^m$ replaced by $\ST^m$).
\end{lem}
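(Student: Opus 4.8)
The plan is to exploit the elementary fact that on the cone $\Omega_c := \{(\xi',\tau):\ \valjp{\xi'}\ge c\tau,\ \tau\ge\tau_0\}$ one has $\lambda^2 = \tau^2+\valjp{\xi'}^2$, hence $\valjp{\xi'}\le\lambda\le(1+c^{-2})^{1/2}\valjp{\xi'}$, so that $\lambda\sim\valjp{\xi'}$ there, whereas on the complementary region $\{\valjp{\xi'}\le c\tau\}$ one has $\tau\le\lambda\le(1+c^2)^{1/2}\tau$, so $\lambda\sim\tau$. Two consequences follow: a standard symbol $a\in\S^m$, when restricted to $\Omega_c$, satisfies the $\ST^m$ estimates there, since $\valjp{\xi'}^{m-|\beta|}\sim\lambda^{m-|\beta|}$; and a symbol of $\ST^{-\infty}$ supported in $\{\valjp{\xi'}\le c\tau\}$ actually belongs to $\S^{-\infty}$, since there $\lambda^{-N}\lesssim\tau^{-N}\lesssim\valjp{\xi'}^{-N}$.

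First I would use the hypothesis on the essential support of $a_2$ to fix a cut-off $\chi=\psi(\valjp{\xi'}/\tau)\in\ST^0$, with $\psi\in\moo(\R^+)$, $\psi\equiv1$ on $[c/2,\infty)$ and $\psi\equiv0$ on $[0,c/4]$, so that $r_2:=(1-\chi)a_2\in\ST^{-\infty}$ while $\chi a_2$ is supported in $\{\valjp{\xi'}\ge(c/4)\tau\}$. By the two remarks above, $\chi a_2\in\S^{m'}$ and $r_2\in\S^{-\infty}$, hence $a_2\in\S^{m'}$; the standard Weyl calculus recalled in Appendix~\ref{app.sub.pseudo} then gives $a_1\sharp a_2\in\S^{m+m'}$ together with the asymptotic expansion \eqref{eq: composition asymptotic series}. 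Since that expansion only involves derivatives of $a_1$ and $a_2$ evaluated on the diagonal $(y,\eta)=(x,\xi)$, where $a_2\equiv\chi a_2$ modulo $\ST^{-\infty}$ and $\valjp{\xi'}\sim\lambda$, the very same series also provides the expansion with remainders in $\ST$-classes once the membership $a_1\sharp a_2\in\ST^{m+m'}$ is established.

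It remains to promote $a_1\sharp a_2\in\S^{m+m'}$ to $a_1\sharp a_2\in\ST^{m+m'}$. I would split the $\xi'$-space: on $\{\valjp{\xi'}\ge(c/8)\tau\}$ the $\S^{m+m'}$ bounds are already $\ST^{m+m'}$ bounds because $\valjp{\xi'}\sim\lambda$; on $\{\valjp{\xi'}\le(c/8)\tau\}$, where $\lambda\sim\tau$, I would show $a_1\sharp a_2=O(\tau^{-\infty})$ with all its derivatives by examining the oscillatory-integral representation of $\sharp$. Indeed, on the support of the $\chi a_2$ factor one has $\valjp{\xi'+\zeta'}\ge(c/4)\tau$ whereas the output frequency satisfies $\valjp{\xi'}\le(c/8)\tau$, so $|\zeta'|\gtrsim\tau$, and repeated non-stationary-phase integrations by parts in the variable dual to $\zeta'$ yield arbitrary decay in $\tau$; the contribution of the $r_2$ factor is already $O(\tau^{-\infty})$ on its support, where $\lambda\sim\tau$. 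Combining the two regions gives $|\p_x^\alpha\p_{\xi'}^\beta(a_1\sharp a_2)|\lesssim\lambda^{m+m'-|\beta|}$ uniformly in $\tau$, i.e. $a_1\sharp a_2\in\ST^{m+m'}$. The statement for $a_2\sharp a_1$ is identical, or follows from it by the Weyl adjoint formula \eqref{eq: weyl adjoint} applied to $\bar a_2\sharp\bar a_1$.

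The main obstacle is precisely this last upgrade: $a_1$ satisfies no $\ST$-estimate outside $\Omega_c$, so one cannot merely quote the semiclassical calculus. One has to use genuinely that the composition localizes the output, up to a $\tau$-rapidly-decaying error, into the region $\{\valjp{\xi'}\gtrsim\tau\}$ where the standard and semiclassical symbol classes coincide, and this step requires the non-stationary-phase analysis of the oscillatory integral rather than a purely symbolic manipulation.
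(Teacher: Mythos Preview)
Your argument is correct, but the paper takes a shorter path. You stay in the Weyl quantization, introduce an auxiliary cut-off $\chi$, pass through the standard calculus to get $a_1\sharp a_2\in\S^{m+m'}$, and then must upgrade to $\ST^{m+m'}$ in the region $\valjp{\xi'}\lesssim\tau$ via an explicit non-stationary-phase integration by parts on the Weyl oscillatory integral. The paper instead switches at once to the standard (left) quantization, noting that essential supports are quantization-invariant. In the left-composition formula
\[
(a_1\circ a_2)(x,\xi')=(2\pi)^{1-n}\iint e^{-iy'\cdot\eta'}a_1(x,\xi'-\eta')\,a_2(x'-y',x_n,\xi')\,dy'\,d\eta',
\]
the factor $a_2$ is evaluated at the \emph{output} frequency $\xi'$, so when $\xi'$ lies outside the essential support of $a_2$ one has $|a_2(\cdot,\xi')|\le C_\ell\lambda^{-\ell}$ directly and the decay in $\lambda$ comes for free; in the complementary region $\valjp{\xi'}\sim\lambda$ and Peetre's inequality suffices. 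This removes the need for both the cut-off and the non-stationary-phase step. One small imprecision in your write-up: the sentence ``the contribution of the $r_2$ factor is already $O(\tau^{-\infty})$ on its support'' is not quite the right justification, since $a_1\sharp r_2$ is not localized to $\supp r_2$; the clean fix is that $\tau^N r_2\in\S^0$ uniformly in $\tau$ (because $\lambda\ge\tau$), whence $a_1\sharp r_2=\tau^{-N}\bigl(a_1\sharp(\tau^N r_2)\bigr)$ is $O(\tau^{-N}\valjp{\xi'}^m)$ and hence $O(\tau^{-\infty})$ where $\valjp{\xi'}\lesssim\tau$. Both proofs treat the reverse order $a_2\sharp a_1$ by passing to adjoints.
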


\begin{proof}
  As the essential support is invariant when we change quantization,  we
  may  simply use the standard quatization in the proof. With $a_1$
  and $a_2$ satisfying the assumption listed above we thus consider $\op{a_1}
  \op{a_2}$. For fixed $\tau$ the standard composition formula applies
  and we have (see \cite[Section 18.1]{Hoermander:V3} or \cite{AG:91})
  \begin{align*}
    \big(a_1 \circ a_2\big) (x,\xi') = (2 \pi)^{1-n} \iint e^{-i y' \cdot \eta'}
  a_1 (x, \xi' - \eta') a_2 (x'-y',x_n, \xi') d y' d \eta'.
  \end{align*}
  Properties of oscillatory integrals (see \eg \cite[Appendices
  I.8.1 and I.8.2]{AG:91}) give, for some $k \in \N$,
  \begin{align*}
    \val{\big(a_1 \circ a_2\big) (x,\xi')} 
    \leq C  \sup_{|\alpha| + |\beta| \leq k \atop {(y',\eta') \in R^{2n-2}}} 
    \valjp{(y',\eta')}^{-|m|}
    \val{\p_{y'}^\alpha \p_{\eta'}^\beta 
      a_1 (x, \xi' - \eta') a_2 (x'-y',x_n, \xi') },
  \end{align*}
  In a region $\valjp{\xi'} \gtrsim \tau$ that contains the essential
  support of $a_2$ we have $\valjp{\xi'} \sim \lambda$.
  With the so-called Peetre inequality, we thus obtain
  \begin{align*}
    \val{\big(a_1 \circ a_2\big) (x,\xi')} 
    \lesssim \valjp{\eta'}^{-|m|} \valjp{\xi'-\eta'}^m \lambda^{m'}
    \lesssim \valjp{\xi'}^m \lambda^{m'} \lesssim  \lambda^{m+m'}.
  \end{align*}
  In a region $\valjp{\xi'} \lesssim \tau$ outside of the essential
  support of $a_2$ we find, for any $\ell \in \N$,
  \begin{align*}
    \val{\big(a_1 \circ a_2\big) (x,\xi')} 
    \lesssim \valjp{\eta'}^{-|m|} \valjp{\xi'-\eta'}^m \lambda^{-\ell}
    \lesssim \valjp{\xi'}^m \lambda^{-\ell} \lesssim \lambda^{m-\ell}.
  \end{align*}
  In the whole phase space we thus have $\val{\big(a_1 \circ a_2\big)
    (x,\xi')} \lesssim \lambda^{m+m'}$. The 
  estimation of $\val{\p_x^\alpha \p_{\xi'}^\beta \big(a_1 \circ
    a_2\big)(x,\xi')}$ can be done similarly to give
  \begin{align*}
    \val{\p_x^\alpha \p_\xi^\beta \big(a_1 \circ
    a_2\big)(x,\xi')}
  \lesssim \lambda^{m+m' -|\beta|}.
  \end{align*}
  Hence $a_1 \circ a_2 \in \ST^{m+m'}$. Moreover, we also
  obtain the asymptotic series (following the references
  cited above)
  \begin{align*}
    \big(a_1 \circ a_2\big)(x,\xi') 
    - \ssum_{j < N} \big( i D_\xi\cdot D_y)^j
    a_1(x,\xi) a_2(y,\eta)/j!\big|_{(y,\eta) = (x,\xi)} \in \ST^{m+m'-N},
  \end{align*}
  where each term is respectively in $\ST^{m+m'-j}$ be the
  arguments given above.  From this series the corresponding 
  Weyl-quantization series follows.

  For the second result, considering the adjoint operator $\big( \op{a_2}
  \op{a_1}\big)^\ast$ yields a composition of operators as in the
  first case. The second result thus follows from the first one.
  \end{proof}

\begin{rem}
  The symbol class and calculus we have introduced in this section can
  be written as $\ST^m = \S(\lambda^m,g)$ in the sense of the
  Weyl-H\"ormander calculus \cite[Sec 18.4--18.6]{Hoermander:V3} with
  the phase-space metric $g = |dx|^2 + |d \xi|^2 / \lambda^2$. 
\end{rem}

% Proofs of some intermediate results
\subsection{Proofs of some intermediate results}

\subsubsection{Proof of Lemma~\ref{lemma: effective sub-ellipticiy condition}}
\label{proof: lemma: effective sub-ellipticiy condition}

For simplicity we remove the $\pm$ notation here.
We first prove that 
there exist $C>0$ and $\eta>0$ such that 
\begin{align}
  \label{eq: sub-ellipticity with positive constants}
  |q_2| \leq \eta \tau^2\ \
  \text{and}\ \ |q_1|\leq \eta \tau^2
  \ \ \Longrightarrow\ \ \{ q_2, q_1\}\geq C \tau^3.
\end{align}

 We set
\begin{align*} 
  \tilde{q}_2 =(\xi_n +s)^2 + \frac{b_{jk}}{a_{n n}}\xi_j \xi_k 
  -  (\varphi')^2, 
  \qquad \tilde{q}_1 = \varphi' (\xi_n +s).
\end{align*}
We have $q_j(x,\xi) = \tau^2 \tilde{q}_j (x,\xi/\tau)$.  Observe next
that we have $\{ q_2, q_1\}(x,\xi) = \tau^3 \{ \tilde{q}_2,
\tilde{q}_1\} (x,\xi/\tau)$.  We thus have $\tilde{q}_2 =0$ and
$\tilde{q}_1=0 \ \Rightarrow \ \{ \tilde{q}_2, \tilde{q}_1\}>0$.  As
$\tilde{q}_2(x,\xi)=0$ and $\tilde{q}_1(x,\xi)=0$ yields a compact set
for $(x,\xi)$ (recall the $x$ lays in a compact set $K$ here), for
some $C>0$, we have
  \begin{align*}
    \tilde{q}_2 =0\ \text{and}\ \tilde{q}_1=0
    \quad \Longrightarrow \quad \{ \tilde{q}_2, \tilde{q}_1\}>C.
  \end{align*}
  This remain true locally, \ie, for some $C'>0$ and $\eta>0$,
  \begin{align*}
    |\tilde{q}_2| \leq \eta \ \text{and}\ |\tilde{q}_1|\leq \eta
    \quad \Longrightarrow \quad \{ \tilde{q}_2, \tilde{q}_1\}>C'.
  \end{align*}
  Then \eqref{eq: sub-ellipticity with positive constants} follows.

  We note that $q_2^\pm=0$ and $q_1^\pm=0$ implies $\tau \sim
|\xi'|$. Hence, for $\tau$ sufficiently large we have \eqref{eq: q2 for
  xi' large}. We thus obtain
\begin{align*}
  q_2^\pm =0\ \text{and}\ q_1^\pm=0 \quad \Leftrightarrow \quad 
  \xi_n+s_\pm =0\ \text{and}\  \tau \varphi_\pm' = m_\pm.
\end{align*}

Let us assume that $|f| \leq \delta \lambda$ with $\delta$ small
and $\lambda^2 = 1 + \tau^2 + |\xi'|^2$.  Then
\begin{align}
  \label{eq: equiv tau xi}
  \tau \lesssim |\xi'| \lesssim \tau.
\end{align}
 We set $\xi_n =- s$, \ie, we choose $q_1=0$.  A direct
 computation yields
\begin{align*}
  \{ q_2, q_1\} = \tau  e \varphi' 
  \{ \xi_n + s, f\} + \tau f \varphi' \{ \xi_n + s, e\} 
  \quad\text{if} \ \ \xi_n+s=0.
\end{align*}
With \eqref{eq: q2 for xi' large} we have $|q_2| \leq C \delta
\tau^2$. For $\delta$ small, by \eqref{eq: sub-ellipticity with
  positive constants} we have $\{ q_2 ,q_1\} \geq C \tau^3$.  Since $f
\tau \varphi' \{ \xi_n + s, e\} \leq C \delta \tau^3$ we obtain $ e
\tau \varphi' \{ \xi_n + s, f\} \geq C \tau^3$, with $C>0$, for
$\delta$ sufficiently small. With~\eqref{eq: equiv tau xi} we have
$\tau \lesssim e \lesssim \tau$ and the result follows.  \hfill
$\blacksquare$ \endproof

\subsubsection{Proof of Lemma~\ref{lemma: estimates for large tau}}
\label{proof: estimates for large tau}

We set $s=2\ell+1$ and $\omega_1 = \op{\psi_\epsilon} \omega$. 
We write 
\begin{align*}
  &2 \re ( \P_{F+} \omega_1, i H_+ \tau^s \omega_1)\\
  &\qquad = (i[D_n,H_+] \omega_1, \tau^s \omega_1) 
  + 2 (F_+ \omega_1, H_+ \tau^s \omega_1)\\
  &\qquad= \tau^s \val{{\omega_1}\br}_{L^2(\R^{n-1})}^2
  + 2 (\tau^{s+1} \varphi' \omega_1, H_+ \omega_1)
  - 2 (\tau^s M_+ \omega_1, H_+ \omega_1)\\
  &\qquad \geq \tau^s \val{{\omega_1}\br}_{L^2(\R^{n-1})}^2 
  + 2 (\tau^{s+1} C_0 \omega_1, H_+ \ \omega_1)
  - 2 C_1\tau^s  \norm{H_+\omega_1}_{L^2(\R;H^\hf(\R^{n-1}))}^2, 
\end{align*}
by \eqref{eq: bornitude M}. 
We have
\begin{align*}
  &2 (\tau^{s+1} C_0 \omega_1, H_+ \ \omega_1)
  - 2 C_1 \tau^s \norm{H_+ \omega_1}_{L^2(\R;H^\hf(\R^{n-1}))}^2\\
  &\qquad 
  =2 \tau^s (2 \pi)^{1-n} \int\limits_0^\infty\int\limits_{\R^{n-1}} 
  \big(C_0\tau - C_1 \valjp{\xi'}\big)   
  |\psi_\epsilon(\tau,\xi') \hat{\omega}(\xi', x_n)|^2    d \xi' d x_n
\end{align*}
As $\tau \geq C \valjp{\xi}/\epsilon$ in $\supp(\psi_\epsilon)$, for
$\epsilon$ \suff small we have 
\begin{align*}
  &2 (\tau^{s+1} C_0 \omega_1, H_+ \ \omega_1)
  - 2 C_1 \tau^s \norm{H_+ \omega_1}_{L^2(\R;H^\hf(\R^{n-1}))}^2\\
  &\qquad \gtrsim \int\limits_0^\infty
  \int\limits_{\R^{n-1}} \lambda^{s+1} |\psi_\epsilon(\tau,\xi')
  \hat{\omega}(\xi', x_n)|^2 d \xi' d x_n 
  \gtrsim \norm{H_+ \omega_1}_{L^2(\R;\H^{\ell+1})}^2.
\end{align*}
Similarly we find $\tau^s \val{{\omega_1}\br}_{L^2(\R^{n-1})}^2
\gtrsim \val{{\omega_1}\br}_{\H^{\ell+\hf}}^2$.  The result for
$\P_{E+}$ follows from the Young inequality.  The proof is identical for
$\P_{F+}$.

On the other side of the interface we write
\begin{align*}
  &2 \re (H_- \P_{F-} \omega_1, i H_- \tau^s \omega_1)\\
  &\qquad = (i[D_n,H_-] \omega_1, \tau^s \omega_1) 
  + 2 (F_- \omega_1, H_- \tau^s \omega_1)\\
  &\qquad= - \tau^s \val{{\omega_1}\bl}_{L^2(\R^{n-1})}^2
  + 2 (\tau^{s+1} \varphi' \omega_1, H_- \omega_1)
  - 2 (\tau^s M_- \omega_1, H_- \omega_1),
\end{align*}
which yields a boundary contribution with the opposite sign. 
\hfill \qedsymbol \endproof

\subsubsection{Proof of Lemma~\ref{sub-lemma: estimate quasi-mode}}
\label{proof: estimate quasi-mode}
Let $(\tau,\xi') \in \supp(\psi)$. We choose $\tau$ sufficiently large
so that, through $\supp(\psi)$, $|\xi'|$ is itself sufficiently large,
so as to have the symbol $m_\pm$  homogeneous --see~\eqref{eq: m pm for xi' large}.
    
    We set 
    \begin{align*}
    &y_{+}(\xi',x_{n})= Q_+ (\xi',x_{n}) 
    \chi_{0}\Big(\frac{\tau\beta \gamma x_{n}}{\val{f_{+}(0)}}\Big),
    \\
    &y_{-}(\xi',x_{n})= a Q_- (\xi',x_{n})
    \chi_{0}\Big(\frac{\tau\beta \gamma x_{n}}{f_{-}(0)}\Big)
    +b \tilde{Q}_- (\xi',x_{n})
    \chi_{0}\Big(\frac{\tau\beta \gamma x_{n}}{e_{-}(0)}\Big).
  \end{align*}
  
  On the one hand we have $ i(D_{n}+i f_{+})y_{+} = \frac{\tau\beta
    \gamma}{\val{f_{+}(0)}} Q_+ (\xi',x_{n})
  \chi'_{0}\Big(\frac{\tau\beta \gamma x_{n}}{\val{f_{+}(0)}}\Big), $ and
  \begin{align*}
    (\mathcal M_{\tau}y_+)(\xi',x_{n})
    &= 2 \tau\beta\gamma c_+ m_+ \frac{Q_+ (\xi',x_{n})}{\val{f_{+}(0)}} 
  \chi'_{0}\Big(\frac{\tau\beta \gamma x_{n}}{\val{f_{+}(0)}}\Big)\\
  &\quad - (\tau\beta \gamma)^2 c_+ \frac{Q_+ (\xi',x_{n})}{\val{f_{+}(0)}^2}
   \chi''_{0}\Big(\frac{\tau\beta \gamma x_{n}}{\val{f_{+}(0)}}\Big),
  \end{align*}
  as $D_n + i e_+ = D_n + i (f_+ + 2 m_+)$,
  so that 
  \begin{align*}
    \int\limits_{0}^{+\io}\val{(\mathcal M_{\tau}y_+)(\xi',x_{n})}^2 d x_{n}
    &\leq 8 c_+^2 m_+^2 \Big(\frac{\tau\beta\gamma}{f_{+}(0)}\Big)^2  
    \int\limits_{0}^{+\io} 
    \chi'_{0}\Big(\frac{\tau\beta \gamma x_{n}}{\val{f_{+}(0)}}\Big)^2
    e^{x_{n}(2 f_{+}(0)+\tau\beta x_{n})} dx_{n}
    \\
    &\quad + 
    2 c_+^2 \Big(\frac{\tau\beta \gamma}{f_{+}(0)}\Big)^4
    \int\limits_{0}^{+\io} 
    \chi''_{0}\Big(\frac{\tau\beta \gamma x_{n}}{\val{f_{+}(0)}}\Big)^2
    e^{x_{n}(2 f_{+}(0)+\tau\beta x_{n})} dx_{n}.
  \end{align*}

  On the support of $\chi_0^{(j)}\Big(\frac{\tau\beta \gamma
    x_{n}}{|f_{+}(0)|}\Big)$, $j=1,2$, we have
  $|f_+(0)|/(2\tau\beta\gamma)\leq x_n \leq |f_+(0)|/(\tau\beta \gamma)$ and in
  particular $2 f_{+}(0) + \tau\beta \gamma x_{n}\le -\val{f_{+}(0)}$ and
  which gives
  \begin{align*}
    &\int\limits_{0}^{+\io}\val{(\mathcal M_{\tau}y_+)(\xi',x_{n})}^2 d x_{n}
    \\
    & \quad \leq  c_+^2 \Big(\frac{\tau\beta \gamma}{f_{+}(0)}\Big)^2  
    \bigg(
    8 m_+^2  \norm{\chi'_{0}}^2_{L^\io} 
    + 2 \Big(\frac{\tau\beta \gamma}{f_{+}(0)}\Big)^2 \norm{\chi''_{0}}^2_{L^\io} 
    \bigg)
    \!\! \int\limits_{\frac{|f_+(0)|}{2\tau \beta \gamma} \leq x_n \leq \frac{|f_+(0)|}{\tau \beta \gamma}}
    \!\!\!\!\!\! e^{-|f_+(0)|x_n} d x_n\\
    & \quad \leq  c_+^2 \frac{\tau\beta \gamma }{|f_{+}(0)|}  
    \bigg(
    4 m_+^2  \norm{\chi'_{0}}^2_{L^\io} 
    + \Big(\frac{\tau\beta\gamma}{f_{+}(0)}\Big)^2  \norm{\chi''_{0}}^2_{L^\io}  
    \bigg)
    e^{-\frac{f_+(0)^2}{2\tau \beta \gamma}}.
  \end{align*}
  Similarly, we have 
  \begin{align*}
    (\mathcal M_{\tau}y_-)(\xi',x_{n})
    &= 2 \tau \beta \gamma c_- m_- 
    \bigg(
    \frac{Q_-(\xi',x_{n})}{f_-(0)} \chi'\Big( \frac{\tau \beta\gamma}{f_-(0)}\Big)
    - \frac{\tilde{Q}_-(\xi',x_{n})}{e_-(0)} \chi'\Big( \frac{\tau \beta \gamma}{e_-(0)}\Big)
    \bigg)\\
    &\quad - c_- (\tau \beta \gamma)^2 
    \bigg(
    \frac{Q_-(\xi',x_{n})}{f_-(0)^2} \chi''\Big( \frac{\tau \beta \gamma}{f_-(0)}\Big)
    + \frac{\tilde{Q}_-(\xi',x_{n})}{e_-(0)^2} \chi''\Big( \frac{\tau \beta \gamma}{e_-(0)}\Big)
    \bigg),
  \end{align*}
  and because of the support of $\chi_0^{(j)}\Big(\frac{\tau\beta \gamma
    x_{n}}{f_{-}(0)}\Big)$, \resp $\chi_0^{(j)}\Big(\frac{\tau\beta \gamma
    x_{n}}{e_{-}(0)}\Big)$,  $j=1,2$, for $x_n \leq 0$, we obtain 
  \begin{align*}
    \int\limits_{-\io}^{0}\val{(\mathcal M_{\tau}y_-)(\xi',x_{n})}^2 d x_{n}
    &\leq 
    2 c_-^2 \frac{\tau\beta \gamma}{f_{-}(0)}  
    \bigg(
    4 m_-^2  \norm{\chi'_{0}}^2_{L^\io} 
    + \norm{\chi''_{0}}^2_{L^\io} \Big(\frac{\tau\beta \gamma}{f_{-}(0)}\Big)^2  
    \bigg)
    e^{-\frac{f_-(0)^2}{2\tau \beta \gamma}}\\
    & \quad + 
    2 c_-^2 \frac{\tau\beta \gamma}{e_{-}(0)}  
    \bigg(
    4 m_-^2  \norm{\chi'_{0}}^2_{L^\io} 
    + \norm{\chi''_{0}}^2_{L^\io} \Big(\frac{\tau\beta \gamma}{e_{-}(0)}\Big)^2  
    \bigg)
    e^{-\frac{e_-(0)^2}{2\tau \beta \gamma}}.
  \end{align*}
  
  Now we have $(\mathcal M_{\tau}u) (\xi',x_{n}) = \psi(\tau,\xi')
  (\mathcal M_{\tau}y) (\xi',x_{n})$.
  As $|\xi'| \sim \tau$ in $\supp(\psi)$ we obtain
 \begin{align*}
   \norm{\mathcal M_{\tau}u}_{L^2(\R^{n-1}\times \R)}^2
   \leq C ( \gamma^2 + \tau^2)\gamma e^{-C'\tau/\gamma} 
   \int_{\R^{n-1}} \psi(\tau,\xi')^2 d \xi'.
 \end{align*}
 With the change of variable $\xi' = \tau \eta$ we find
 \begin{align}
   \label{eq: norme L2 psi}
   \int_{\R^{n-1}} \psi(\tau,\xi')^2 d \xi' = C \tau^{n-1}, 
 \end{align}
 which gives the first result.

On the other hand observe now that 
\begin{align*}
  \norm{y_+}{L^2(\R_+)}^2 &= 
  \int\limits_{0}^{+\io} Q_+(\xi',x_{n})^2 
  \chi_{0}\Big(\frac{\tau\beta \gamma x_{n}}{\val{f_{+}(0)}}\Big)^2 d x_{n}\\
  & \geq \!\!\!
   \int\limits_{0\le \frac{\tau\beta \gamma x_{n}}{\val{f_{+}(0)}}\le \hf}
    \!\!\!\!\!\! e^{x_{n}(2 f_{+}(0)+\tau\beta \gamma x_{n})} dx_{n}
   =\frac{\val{f_{+}(0)}}{\tau \beta \gamma} 
   \int\limits_{0}^{\hf}
   e^{2t\frac{\val{f_{+}(0)}}{\tau \beta \gamma}(f_{+}(0)+
     t\frac{\val{f_{+}(0)}}{2}
     )} dt
   \\
   &\ge \frac{\val{f_{+}(0)}}{\tau\beta \gamma} \int\limits_{0}^{\hf}
   e^{-2t\frac{\val{f_{+}(0)}^2}{\tau\beta \gamma}} dt
   =\frac1{2\val{f_{+}(0)}}\Big(1-e^{-\frac{\val{f_{+}(0)}^2}{\tau\beta \gamma}}\Big).
\end{align*}
We also have 
\begin{align*}
  \norm{y_-}_{L^2(\R_-)}^2 
  &= 
  \int\limits_{-\io}^{0} 
  \bigg( a Q_- (\xi',x_{n})
  \chi_{0}\Big(\frac{\tau\beta \gamma x_{n}}{f_{-}(0)}\Big)
  +b \tilde{Q}_- (\xi',x_{n})
  \chi_{0}\Big(\frac{\tau\beta \gamma x_{n}}{e_{-}(0)}\Big)\bigg)^2 
  d x_{n}\\
  &\geq 
  \!\!\!
   \int\limits_{-\hf\le \frac{\tau\beta \gamma x_{n}}{\val{f_{+}(0)}}\le 0}
    \!\!\!\!\!\!
    e^{x_{n}(2 f_{-}(0)+\tau\beta \gamma x_{n})}
    \Big( a + b e^{x_n (e_-(0) - f_-(0))} \Big)^2 d x_n,
\end{align*}  
and as $e_-(0) - f_-(0) = 2 m_-\geq 0$, $a+b=1$ and $a\geq \hf$, we have $a +
b e^{x_n (e_-(0) - f_-(0))}\geq \hf$, and thus obtain
\begin{align*}
  \norm{y_-}_{L^2(\R_-)}^2 \geq \frac{1}{4} \!\!\!
   \int\limits_{-\hf\le \frac{\tau\beta \gamma x_{n}}{\val{f_{+}(0)}}\le 0}
    \!\!\!\!\!\!
    e^{x_{n}(2 f_{-}(0)+\tau\beta \gamma x_{n})} d x_n
    \geq \frac1{8 f_{-}(0)}\Big(1-e^{-\frac{\val{f_{-}(0)}^2}{\tau\beta \gamma}}\Big),
\end{align*}  
arguing as above.
As a result, using \eqref{eq: norme L2 psi}, we have
\begin{align*}
\norm{u}_{L^2(\R^{n-1}\times \R)}^2
\geq C \tau^{n-2} \big(1-e^{-C' \tau /\gamma}\big) .
\end{align*}
\hfill $\blacksquare$ \endproof

\subsubsection{Proof of Lemma~\ref{sub-lemma: estimate quasi-mode 2}}
\label{proof: estimate quasi-mode 2}
We start with the second result. We set 
$$
z_+ = \big( 1-  \chi_0(|\tau^\hf x'|)\big) \check{\hat{u}}_+(x',x_n),
\quad  \text{ for } x_n \geq 0.
$$
We shall prove that for all $N\in\N$ we have
$\norm{z_+}_{L^2(\R^{n-1}\times \R_+} \leq C_{\gamma,N} \tau^{-N}$.

  From the definition of $\chi_0$ we find
  \begin{align*}
    \norm{z_+}_{L^2(\R^{n-1}\times \R_+}^2 
    \leq \int\limits_{|\tau^\hf x'| \geq \hf } 
    \int\limits_{\R_+} |\hat{u}_+(x',x_n)|^2 d x' d x_n
  \end{align*}
  Recalling the definition of $u_+$ and performing the change of
  variable $\xi' = \tau \eta$ we obtain
  \begin{align*}
    \hat{u}_+(x',x_n) = \tau^{n-1} \int\limits_{\R^{n-1}} 
    e^{i \tau \phi} \tilde{\psi}(\eta)
    \chi_{0}\Big(\frac{\beta \gamma x_{n}}{\val{\tilde{f}_{+}(\eta)}}\Big)
    d \eta,
  \end{align*}
  where  the complex phase
  function is given by
  $$
  \phi =  - x' \cdot \eta 
  -i x_n \big(\tilde{f}_+ (\eta) + \frac{\beta x_n }{2}\big),
  \quad \text{with} \ \ \tilde{f}_+ (\eta) = \alpha_+ - m_+(\eta),
  $$
  and 
  \begin{align*}
    \tilde{\psi}(\eta) = \chi_1\Big(\frac{1}{(1 +
      |\eta|^2)^\hf} - \tau_0\Big) \chi_1\Big(\Big| \frac{\eta}{(1
      + |\eta|^2)^\hf} - \xi_0 \Big|\Big).
  \end{align*}
  Here $\tau$ is chosen sufficiently large so that $m_+$ is
  homogeneous.  Observe that $\tilde{\psi}$ has a compact support
  independent of $\tau$ and that $ \tilde{f}_+ (\eta) +
  \frac{\beta x_n }{2} \leq -C <0$ in the support of the integrand.

  We place ourselves in the neighborhood of a point $x'$ such that
  $|\tau^\hf x'| \geq \hf$. Up to a permutation of the variables we
  may assume that $|\tau^\hf x_1| \geq C$.  We then introduce the
  following differential operator
  \begin{align*}
    L = \tau^{-1} \frac{\p_{\eta_1}}{- i x_1 - x_n \p_{\eta_1} m_+(\eta)},
  \end{align*}
  that satisfies $L e^{i \tau \phi} =  e^{i \tau \phi}$. We thus have
  \begin{align*}
    \hat{u}_+(x',x_n) = \tau^{n-1} \int\limits_{\R^{n-1}} e^{i \tau \phi} 
    (L^t)^N
    \bigg(\tilde{\psi}(\eta)
    \chi_{0}\Big(\frac{\beta \gamma x_{n}}{\val{\tilde{f}_{+}(\eta)}}\Big)
    \bigg)
    d \eta,
  \end{align*}
  and we find 
  \begin{align*}
    |\hat{u}_+(x',x_n)| \leq C_N \frac{\tau^{n-1} \gamma^N}{|\tau x_1|^N} 
    e^{- C \tau x_n}. 
  \end{align*}
  More generally for $|\tau^\hf x'| \geq \hf$ we have
  \begin{align*}
    |\hat{u}_+(x',x_n)| \leq C_N \frac{\tau^{n-1} \gamma^N}{|\tau x'|^N} 
    e^{- C \tau x_n}. 
  \end{align*} 
  Then we obtain 
  \begin{align*}
    &\int\limits_{|\tau^\hf x'| \geq \hf } 
    \int\limits_{\R_+} |\hat{u}_+(x',x_n)|^2 d x' d x_n\\
    &\qquad \leq C_{N+n} \gamma^{N+n} \tau^{n-1} 
    \Big( \int\limits_{|\tau^\hf x'| \geq \hf } \frac{1}{|\tau x'|^{N+n}} d x'\Big)
    \Big( \int\limits_{\R_+} e^{- C \tau x_n} d x_n\Big)\\
    &\qquad \leq C_N'  \gamma^{N+n} \tau^{-\frac{3+N}{2}} 
    \int\limits_{|x'| \geq \hf } \frac{1}{|x'|^{n}} d x'.
  \end{align*}
  
  Similarly, setting $z_- = \big( 1- \chi_0(|\tau^\hf x'|)\big)
  \check{\hat{u}}_-(x',x_n)$ for $x_n\leq 0$ we obtain
  $\norm{z_-}_{L^2(\R^{n-1}\times \R_-} \leq  C_\gamma \tau^{-N}$.
  The second result thus follows from Lemma~\ref{sub-lemma: estimate
    quasi-mode}.

  For the first result we write 
  \begin{align*}
    {\mathcal L}_{\tau} v_\pm = (2\pi)^{-(n-1)} \chi_0(|\tau^\hf x'|) 
    {\mathcal L}_{\tau} \check{\hat{u}}_\pm
    + (2\pi)^{-(n-1)} \big[{\mathcal L}_{\tau}, \chi_0(|\tau^\hf x'|)\big] \check{\hat{u}}_\pm
  \end{align*}
  The first term is estimated using Lemma~\ref{sub-lemma: estimate
    quasi-mode} as 
  $$
  (2\pi)^{-\frac{(n-1)}{2}} 
  \norm{{\mathcal L}_{\tau}\check{\hat{u}}_\pm }_{L^2(\R^{n-1}\times R_\pm)}
  = \norm{{\mathcal M}_{\tau} u_\pm }_{L^2(\R^{n-1}\times R_\pm)}.
  $$
  Observing that ${\mathcal L}_{\tau}$ is a {\em differential} operator
  The commutator is thus a first-order differential operator in $x'$
  with support in a region $|\tau^\hf x'| \geq C$, because of the
  behavior of $\chi_1$ near $0$. The coefficients of this operator
  depend on $\tau$ polynomially.  The zero-order terms can be estimated as
  we did for $z_+$ above with  an additional 
  $\tau$ factor.

  For the first-order term observe that we have 
  \begin{align*}
    \p_{x'_j} \check{\hat{u}}_+(x',\tau)  
    = \tau^{n} \int\limits_{\R^{n-1}} \eta_j e^{i \tau \big( x' \cdot \eta 
      -i x_n (\tilde{f}_+ (\eta) + \frac{\beta x_n }{2}) \big)} \tilde{\psi}(\eta)
    \chi_{0}\Big(\frac{\beta \gamma x_{n}}{\val{\tilde{f}_{+}(\eta)}}\Big)
    d \eta. 
  \end{align*}
  We thus obtain similar estimates as above
  with an additional $\tau^{\frac{3}{2}}$ factor. This concludes the proof.
\hfill $\blacksquare$ \endproof

\bibliography{refjlr-nl}

\providecommand{\bysame}{\leavevmode\hbox to3em{\hrulefill}\thinspace}
\providecommand{\MR}{\relax\ifhmode\unskip\space\fi MR }
% \MRhref is called by the amsart/book/proc definition of \MR.
\providecommand{\MRhref}[2]{%
  \href{http://www.ams.org/mathscinet-getitem?mr=#1}{#2}
}
\providecommand{\href}[2]{#2}
\begin{thebibliography}{10}

\bibitem{MR597745}
S.~Alinhac, \emph{Non-unicit\'e pour des op\'erateurs diff\'erentiels \`a
  caract\'eristiques complexes simples}, Ann. Sci. \'Ecole Norm. Sup. (4)
  \textbf{13} (1980), no.~3, 385--393.

\bibitem{AG:91}
S.~Alinhac and P.~G{\'e}rard, \emph{Pseudo-differential operators and the
  {N}ash-{M}oser theorem}, Graduate Studies in Mathematics, vol.~82, American
  Mathematical Society, Providence, RI, 2007, Translated from the 1991 French
  original by Stephen S. Wilson.

\bibitem{MR896184}
H.~Bahouri, \emph{D\'ependance non lin\'eaire des donn\'ees de {C}auchy pour
  les solutions des \'equations aux d\'eriv\'ees partielles}, J. Math. Pures
  Appl. (9) \textbf{66} (1987), no.~2, 127--138.

\bibitem{BDLR:07}
A.~Benabdallah, Y.~Dermenjian, and J.~Le Rousseau, \emph{Carleman estimates for
  the one-dimensional heat equation with a discontinuous coefficient and
  applications to controllability and an inverse problem}, J. Math. Anal. Appl.
  \textbf{336} (2007), 865--887.

\bibitem{BDLR:10}
\bysame, \emph{Carleman estimates for stratified media}, preprint (2010).

\bibitem{MR1822406}
P.~Buonocore and P.~Manselli, \emph{Nonunique continuation for plane uniformly
  elliptic equations in {S}obolev spaces}, Ann. Scuola Norm. Sup. Pisa Cl. Sci.
  (4) \textbf{29} (2000), no.~4, 731--754.

\bibitem{Calderon:58}
A.-P. Calder{\'o}n, \emph{Uniqueness in the {C}auchy problem for partial
  differential equations.}, Amer. J. Math. \textbf{80} (1958), 16--36.

\bibitem{MR0000334}
T.~Carleman, \emph{Sur un probl\`eme d'unicit\'e pur les syst\`emes
  d'\'equations aux d\'eriv\'ees partielles \`a deux variables
  ind\'ependantes}, Ark. Mat., Astr. Fys. \textbf{26} (1939), no.~17, 9.

\bibitem{MR1932966}
A.~Doubova, A.~Osses, and J.-P. Puel, \emph{Exact controllability to
  trajectories for semilinear heat equations with discontinuous diffusion
  coefficients}, ESAIM Control Optim. Calc. Var. \textbf{8} (2002), 621--661, A
  tribute to J. L. Lions.

\bibitem{Filonov:01}
N.~Filonov, \emph{Second-order elliptic equation of divergence form having a
  compactly supported solution}, J. Math. Sci. (New York) \textbf{106} (2001),
  no.~3, 3078--3086, Function theory and phase transitions.

\bibitem{MR0104924}
L.~H{\"o}rmander, \emph{On the uniqueness of the {C}auchy problem}, Math.
  Scand. \textbf{6} (1958), 213--225.

\bibitem{MR0161012}
\bysame, \emph{Linear partial differential operators}, Die Grundlehren der
  mathematischen Wissenschaften, Bd. 116, Academic Press Inc., Publishers, New
  York, 1963.

\bibitem{Hoermander:V3}
\bysame, \emph{The analysis of linear partial differential operators. {III}},
  Grundlehren der Mathematischen Wissenschaften, vol. 274, Springer-Verlag,
  Berlin, 1985, Pseudodifferential operators.

\bibitem{MR1481433}
\bysame, \emph{The analysis of linear partial differential operators. {IV}},
  Grundlehren der Mathematischen Wissenschaften, vol. 275, Springer-Verlag,
  Berlin, 1994, Fourier integral operators.

\bibitem{IP:03}
O.~Yu. Imanuvilov and J.-P. Puel, \emph{Global carleman estimates for weak
  solutions of elliptic nonhomogeneous {D}irichlet problems}, Int. Math. Res.
  Not. \textbf{16} (2003), 883--913.

\bibitem{MR0130456}
F.~John, \emph{Continuous dependence on data for solutions of partial
  differential equations with a presribed bound}, Comm. Pure Appl. Math.
  \textbf{13} (1960), 551--585.

\bibitem{LeRousseau:07}
J.~Le~Rousseau, \emph{Carleman estimates and controllability results for the
  one-dimensional heat equation with {$BV$} coefficients}, J. Differential
  Equations \textbf{233} (2007), 417--447.

\bibitem{LL:09}
J.~Le~Rousseau and G.~Lebeau, \emph{On {C}arleman estimates for elliptic and
  parabolic operators. {A}pplications to unique continuation and control of
  parabolic equations}, Preprint (2009).

\bibitem{LRR:09}
J.~Le~Rousseau and L.~Robbiano, \emph{Local and global {C}arleman estimates for
  parabolic operators with coefficients with jumps at interfaces}, Invent.
  Math., to appear, 92 p.

\bibitem{LRR:10}
\bysame, \emph{Carleman estimate for elliptic operators with coefficents with
  jumps at an interface in arbitrary dimension and application to the null
  controllability of linear parabolic equations}, Arch. Rational Mech. Anal.
  \textbf{195} (2010), 953--990.

\bibitem{lernerbook}
N.~Lerner, \emph{Metrics on the phase space and non-selfadjoint
  pseudo-differential operators}, Pseudo-Differential Operators, Vol. 3,
  Birkh\"auser, Basel, 2010.

\bibitem{MR0393903}
K.~Miller, \emph{Stabilized quasi-reversibility and other nearly-best-possible
  methods for non-well-posed problems}, Symposium on Non-Well-Posed Problems
  and Logarithmic Convexity (Heriot-Watt Univ., Edinburgh, 1972), Springer,
  Berlin, 1973, pp.~161--176. Lecture Notes in Math., Vol. 316.

\bibitem{MR0342822}
\bysame, \emph{Nonunique continuation for uniformly parabolic and elliptic
  equations in self-adjoint divergence form with {H}\"older continuous
  coefficients}, Arch. Rational Mech. Anal. \textbf{54} (1974), 105--117.

\bibitem{MR0153959}
A.~Pli{\'s}, \emph{On non-uniqueness in {C}auchy problem for an elliptic second
  order differential equation}, Bull. Acad. Polon. Sci. S\'er. Sci. Math.
  Astronom. Phys. \textbf{11} (1963), 95--100.

\bibitem{MR1630571}
F.~Schulz, \emph{On the unique continuation property of elliptic divergence
  form equations in the plane}, Math. Z. \textbf{228} (1998), no.~2, 201--206.

\end{thebibliography}
\nocite{*}
\bibliographystyle{amsplain}

%%%%%%%%%%%%%%
\end{document}